

\documentclass[preprint,12pt]{elsarticle}




\usepackage{amssymb,amsthm}
\usepackage{latexsym,amsmath,epsfig,amsfonts,graphicx,mathrsfs,color}
\usepackage{bm}
\usepackage{pstricks,pst-node,pst-tree}
\usepackage{tikz}
\usetikzlibrary{arrows}



\newcommand{\vc}[1]{\boldsymbol{#1}}

\newcommand{\soph}[1]{{#1}}
\usepackage{dsfont}


\newcommand{\registered}
   {{\scriptsize \ooalign{\hfil\raise0.07ex\hbox{\scriptsize \sc r}\hfil%
              \crcr\mathhexbox20D}}}

\newtheorem{assumption}{Assumption}
\newtheorem{theorem}{Theorem}
\newtheorem{lemma}{Lemma}
\newtheorem{definition}{Definition}
\newtheorem{corollary}{Corollary}

\newtheorem{proposition}{Proposition}
\newcommand{\nc}{\newcommand}
\nc{\ds}{\displaystyle}
\nc{\mbZ}{\mathbb Z}
\nc{\mbQ}{\mathbb Q}
\nc{\mbR}{\mathbb R}
\nc{\mbC}{\mathbb C}
\nc{\mbN}{\mathbb N}
\nc{\mbE}{{\bf E}}
\nc{\mbP}{{\bf P}}

\nc{\PH}{\emph{PH} }
\nc{\ME}{\emph{ME} }
\nc{\LST}{\emph{LST} }
\nc{\rank}{\mbox{rank\hspace{1pt}}}


\numberwithin{equation}{section}

\numberwithin{figure}{section}

\parskip=5pt

\usepackage[english]{babel}



\journal{Stochastic Processes and their Applications}

\begin{document}

\begin{frontmatter}



\title{A pathwise approach to the extinction of branching processes with countably many types}


\author[peter]{Peter Braunsteins}
\ead{p.braunsteins@student.unimelb.edu.au}
\author[geoffrey]{Geoffrey Decrouez}
\ead{ggdecrouez@hse.ru}
\author[peter,sophie]{Sophie Hautphenne\corref{cor1}}
\ead{sophiemh@unimelb.edu.au}

\address[peter]{The University of Melbourne, Australia}
\address[geoffrey]{National Research University, Higher School of Economics, Russia}
\address[sophie]{Swiss Federal Institute of Technology Lausanne, Switzerland}
\cortext[cor1]	{Corresponding author}

\begin{abstract}
We consider the extinction events of Galton-Watson processes with countably infinitely many types. In particular, we construct truncated and augmented Galton-Watson processes with finite but increasing sets of types. A pathwise approach is then used to show that, under some sufficient conditions, the corresponding sequence of extinction probability vectors converges to the global extinction probability vector of the Galton-Watson process with countably infinitely many types. Besides giving rise to a {family of new} iterative methods for computing the global extinction probability vector, our approach paves the way to new global extinction criteria for branching processes with countably infinitely many types.

\end{abstract}

\begin{keyword}
multitype branching process \sep extinction probability  \sep pathwise approach \sep extinction criterion
\MSC[2010] 60J80 \sep 60J05 \sep 60J22 \sep 65H10 



\end{keyword}

\end{frontmatter}



\tikzset{
  treenode/.style = {align=center, inner sep=0pt, text centered,
    font=\sffamily},
  arn_n/.style = {treenode, circle, black, font=\sffamily\bfseries, draw=black,
    fill=white, text width=1.5em},
  arn_r/.style = {treenode, circle, red, draw=red, 
    text width=1.5em, very thick},
  arn_x/.style = {treenode, circle, white, font=\sffamily\bfseries, draw=black,
    fill=black, text width=1.5em}
}

\section{Introduction}

Multitype Galton-Watson branching processes (MGWBPs) are stochastic models describing the evolution of a population of individuals who live one unit of time and give birth to a random number of offspring that may be of various types. Each type may have a different progeny distribution, and individuals behave independently of each other. These processes have been studied extensively during the last decades; classical reference books include Harris \cite{Har02}, Mode \cite{mode}, Athreya and Ney \cite{athreya_ney}, and Jagers \cite{jagers}. MGWBPs have numerous applications and have been used to model important problems arising in biology, ecology, physics and even computer science. Recent books with special emphasis on applications are Axelrod and Kimmel \cite{kimmel}, and Haccou, Jagers and Vatutin \cite{Hac05}.

One of the main topics of research on MGWBPs, and on branching processes in general, is the characterisation of the probability that the population eventually becomes empty. We denote by $q_i$ the conditional extinction probability of the branching process, given that it starts with a single individual of type $i$, and we let $\vc q:=(q_i)$. When the number of types is finite, it is well known that the vector $\vc q$ is the minimal non-negative solution of the fixed-point extinction equation, 
\begin{equation}\label{Fixed Point}
\vc s=\vc G(\vc s),
\end{equation}
where $\vc G(\vc s):=(G_i(\vc s))$ records the progeny generating function associated with each type. Most of the time this {finite} system of equations cannot be solved analytically, but the linear functional iteration algorithm or the quadratic Newton algorithm can be applied to compute $\vc q$ numerically. {In addition, there is a well-established extinction criterion, namely $\bm{q}=\bm{1}$ if and only if the Perron-Frobenius eigenvalue of the mean progeny matrix is less than or equal to one.}

{In contrast to the finite-type case, when there are infinitely many types} 
we need to distinguish between {different} extinction events. We say that there is \emph{global extinction} when the whole population becomes extinct, and $\vc q$ correspond to the probability vector for this event, and we refer to the event that every type becomes extinct as \emph{partial extinction}, and denote its probability vector by $\tilde{\vc q}$. 
{In the context of branching random walks (BRWs), in which individuals are assigned locations instead of types, partial extinction is analogous to \emph{local extinction} at every location.}
It is clear that global extinction implies partial extinction but the converse does not necessarily hold; indeed, it is possible for every type to eventually disappear while the total number of individuals approaches infinity {(see for instance \cite[Section 5]{haut12})}. 
{The vectors $\bm{q}$ and $\tilde{\vc q}$ both satisfy \eqref{Fixed Point}, where $\bm{q}$ is the minimal non-negative solution and $\bm{\tilde{q}}$ may or may not be equivalent to $\bm{{q}}$.}

{Due to the challenges that arise when transitioning from a finite to a countably infinite type set, many of the questions that have been thoroughly explored in the finite setting remain open. Indeed,}
{apart from the recent works of \cite{Lin15} and \cite{Sag13} on a restricted class of MGWBPs with linear fractional progeny generating functions, and of \cite{haut12} on algorithmic techniques, scant attention has been paid to computational aspects of the infinite extinction probability vectors $\vc q$ and $\tilde{\vc q}$.} 
{In addition,} several authors have investigated conditions for $\vc q=\vc 1$ or $\tilde{\vc q}=\vc 1$, 
see for instance \cite{Zuc14,Har02,haut12,moy66,moy67,moyal62,Spa89,Tet04,Zuc11}; however, {while it has been well established that the convergence norm of the (infinite) mean progeny matrix provides a partial extinction criterion (see \cite{Mul08}), the literature still lacks an \soph{easily applicable global extinction criterion that holds under mild conditions}. 
Here we address the two problems {in parallel by defining two new probabilistic tools: to each MGWBP with countably many types $\{\vc  Z_n\}$, we associate \textit{(i)} a sequence of truncated and augmented finite-type branching processes $\{\bar{\vc Z}^{(k)}_n\}_{k\geq 1, n\geq 0}$, which themselves naturally define \textit{(ii)} an embedded branching process $\{\vc S_k\}$ referred to as the \emph{seed process}. The next two paragraphs provide an intuitive description of these two tools and their benefits.

For each $k\geq 1$, the $k$th finite-type branching process $\{\bar{\vc Z}^{(k)}_n\}$ is constructed pathwise on the same probability space as the original process $\{\vc  Z_n\}$ by replacing all types larger than $k$ with a type randomly selected from the set $\{1,\ldots,k\}$ according to some distribution $\vc\alpha^{(k)}$. The corresponding (finite) extinction probability vector is denoted by $\bar{\vc q}^{(k)}$.
In our main theorem (Theorem \ref{general converge}), we prove that, under some sufficient conditions on $\{\vc  Z_n\}$ (closely related to the dichotomy property) and on the sequence of replacement distributions $\{\vc\alpha^{(k)}\}_{k\geq 1}$ (similar to a tightness condition), the sequence $\{\bar{\vc q}^{(k)}\}_{k\geq 1}$ converges to the global extinction probability $\vc q$. 
{This result establishes a link between the extinction of non-singular irreducible finite-type branching processes and global extinction in the infinite-type setting.}
{It} has several implications. First, 
Theorem \ref{general converge} extends the work in \cite{haut12}, in which two monotone sequences of extinction probability vectors, $\{\vc q^{(k)}\}_{k\geq 1}$ and $\{\tilde{\vc q}^{(k)}\}_{k\geq 1}$, are shown to converge respectively to $\vc q$ and $\tilde{\vc q}$. These sequences were obtained by replacing all types larger than $k$ either by an immortal type (yielding $\vc q^{(k)}$), or by a sterile type (yielding $\tilde{\vc q}^{(k)}$), and the monotone convergence theorem was the main argument in the proof. In contrast, the new sequence $\{\bar{\vc q}^{(k)}\}$ is not necessarily monotone, and a completely different approach is required. From a computational point of view, the flexibility in the choice of the replacement distributions $\{\vc\alpha^{(k)}\}$ motivates the search for an optimal choice maximising the convergence rate, but this is out of the scope of this paper.
{Second, as a direct consequence of Theorem \ref{general converge}, we derive new sufficient conditions for $\vc q=\vc 1$ and $\vc q<\vc 1$. Such results could not be obtained using the sequences in \cite{haut12}. }

 The seed process $\{\vc S_k\}$ is an MGWBP evolving in a varying environment, 
 {that arises naturally when exploring the asymptotic behaviour of $\{ \bm{\bar{q}}^{(k)} \}$.}
 It is constructed pathwise from the family of finite-type processes $\{\bar{\vc Z}^{(k)}_n\}$ as follows: the individuals (or \emph{seeds}) in the $k$th generation of $\{\vc S_k\}$ correspond to the individuals in $\{\vc  Z_n\}$ which are replaced by a random type according to $\vc\alpha^{(k)}$ to form $\{\bar{\vc Z}^{(k)}_n\}$. 
 The seed process 
 is {the} fundamental ingredient in the proof of {Theorem \ref{general converge}} but in addition, 
 enjoys {several} interesting properties on its own. For example, $\{\vc S_k\}$ almost surely becomes extinct if and only if global and partial extinction of the original process coincide. 
{While in the present paper our interest in the seed process remains its application to the sequence $\{ \bm{\bar{q}}^{(k)}\}$, we lay the foundations for a subsequent paper \cite{BrauHau2017}, in which properties of the seed process are exploited further, to yield, among other results, a global extinction criterion that applies to a class of branching processes referred to as \emph{lower Hessenberg}.} 

}

Finally, we investigate the convergence properties of the sequence $\{\bar{\vc q}^{(k)}\}$ {when the conditions on $\{\bm{\alpha}^{(k)} \}$ in Theorem \ref{general converge} are not met. We consider
\emph{(a)} replacement by the last type, that is, $\vc\alpha^{(k)}=\vc e_k$, and \textit{(b)} replacement by a uniformly distributed type, that is, $\vc\alpha^{(k)}=\vc 1/k$ and, one particular example that focuses on each case, Examples 2 and 3, respectively. 
 In Example 2, we prove that the limit of the sequence $\{\bm{\bar{q}}^{(k)}\}$ does not always exist, 
and in Example 3, the limit does exist but may correspond to the partial extinction probability $\tilde{\vc q}$. Example 2 highlights the sensitivity of the limit of $\{\bm{\bar{q}}^{(k)}\}$ under \emph{(a)}, whereas Example 3 demonstrates how alternative choices of $\{\bm{\alpha}^{(k)} \}$ may lead to contrasting asymptotic behaviour in $\{\bm{\bar{q}}^{(k)}\}$. 

The paper is organised as follows. The next section provides the background on MGWBP with countably many types. Section 3 focuses on the pathwise construction of the branching processes with corresponding extinction probabilities $\vc q^{(k)}$, $\tilde{\vc q}^{(k)}$, and $\bm{\bar{q}}^{(k)}$. In Section 4  we establish sufficient conditions for the convergence of $\{\bar{\vc q}^{(k)}\}$ to $\vc q$, we study properties of the related seed process, and we prove the main theorem on the convergence of $\{\bm{\bar{q}}^{(k)}\}$. {In Section 5} \soph{we derive sufficient conditions for $\vc q=\vc 1$ and $\vc q<\vc 1$}.
Finally, in Section 6 we study the asymptotic behaviour of $\{\bm{\bar{q}}^{(k)}\}$ {for replacement distributions that do not satisfy the conditions of our main theorem} and provide some numerical {illustrations}. \soph{The pseudo-code for the computation of the global and partial extinction probabilities, and the proofs related to Examples 2 and 3 are provided in some appendices.
}

\section{Preliminaries}

Consider a multitype Galton-Watson process with countably infinite type set $\mathcal{S}=\{1,2, 3,\dots \}$. Throughout the paper we assume that the process initially contains a single individual, whose type will be denoted as $\varphi_0$. The process then evolves according to the following rules:
\begin{itemize}
\item[{\it{(i)}}] each individual lives for a single generation, and
\item[ {\it{(ii)}}] at death it gives birth to $\bm{r}=(r_1, r_2,...)$ offspring, that is, $r_1$ individuals of type 1, $r_2$ individuals of type 2, etc., where the  vector $\bm{r}$ is chosen independently of all other individuals according to a probability distribution, $p_i( \cdot)$, specific to the parental type $i \in \mathcal{S}$.
\end{itemize}

{
{Following Mode \cite{mode} we now give an equivalent but more formal construction of this process.} 
This formulation differs slightly from the standard construction in Harris \cite{Har02} and Jagers \cite{Jag89} but is useful, in particular, in defining the sequence of truncated and augmented processes in Section \ref{ftbp}.

Consider the set of all possible individuals of the form $\langle i_1 j_1\rangle$, $\langle i_1 j_1 i_2 j_2\rangle$, $\dots$, $\langle i_1 j_1 i_2 j_2 \dots i_n j_n \rangle$, $\dots$ where $\langle i_1 j_1 i_2 j_2 \dots i_n j_n \rangle$ is a member of the $n$th generation and is the $i_n$th child of type $j_n$ born to $\langle i_1 j_1 i_2 j_2 \dots i_{n-1} j_{n-1} \rangle$. In other words, each individual in the $n$th generation belongs to the set $\mathcal{J}_n=\langle \mbN \times \mathcal{S} \rangle^n$, and $\mathcal{J}=\langle 0 \rangle \cup \bigcup^{\infty}_{n=1} \mathcal{J}_n$ contains all individuals. To each individual $I \in \mathcal{J}$ we associate a sample space $\Omega_I$ made up of all infinite, non-negative, integer-valued vectors $\bm{r}_I$, with at most finitely many strictly positive entries, where $\bm{r}_I$ represents the number of offspring of the various types produced by the individual. Let $\mathcal{B}_I$ be the corresponding discrete $\sigma$-algebra, and let $\mbP_I$ be the probability measure such that $\mbP_I(B)=\sum_{\vc r_I\in B} p_i(\vc r_I)$ if $I$ is of type $i$, for all $B\in \mathcal{B}_I$. The sequence of 
probability spaces $\{(\Omega_I,\mathcal{B}_I,\mbP_I)\}_{I\in\mathcal{J}}$ induces the product probability space $(\Omega,\mathcal{F},\mbP)$ on which the Galton-Watson branching process with countably many types is defined.  The elements $\omega \in \Omega$ are of the form $\omega=( \bm{r}_I ; I \in \mathcal{J})$. Let $\bm{N}( \omega, I)=(N_1(\omega, I), N_2( \omega, I), \dots )$ contain the number of offspring of each type generated by individual $I$. The individual $I=\langle i_1 j_1 \dots i_n j_n \rangle$ then appears in the population if and only if,
\begin{equation}\label{appear}
i_1 \leq N_{j_1}(\omega, \langle 0 \rangle), \: \: i_2 \leq N_{j_2} ( \omega, \langle i_1 j_1 \rangle ), \: \: \dots , \: \: i_n \leq N_{j_n}( \omega, \langle i_1 j_1 \dots i_{n-1} j_{n-1} \rangle ).
\end{equation} 
For every individual $I=\langle i_1 j_1 \dots i_n j_n \rangle\in \mathcal{J}_n$, let $Z_{j}(\omega,I)=1$ if both $j_n=j$ and condition \eqref{appear} are satisfied, and equal $0$ otherwise. The population at generation $n$ is then given by the vector $\bm{Z}_{n}(\omega)$ which has entries
\begin{equation}\label{Zn}
{Z}_{n,j}(\omega) = \sum_{I \in \mathcal{J}_n} Z_j(\omega,I),\quad j\in\mathcal{S}.
\end{equation}
In the sequel, we will often drop the dependence in $\omega$ when it is not contextually important, and refer to the branching process as $\{ \bm{Z}_n \}_{n \geq 0}$. We let $| \bm{Z}_n | := \sum_{j \in \mathcal{S}} {Z}_{n,j}$ be the total population size at generation $n$. 
}

From the set of probability distributions $\{ p_i ( \cdot ) \}_{i \in \mathcal{S}}$ we define the \emph{progeny generating function} $\bm{G} : [0,1]^{\mathcal{S}} \to [0,1]^{\mathcal{S}}$, which has entries,
\begin{equation}
G_i ( \bm{s} ) = \sum_{ \bm{r} } p_{i}(\bm{r}) \bm{s}^{\bm{r}} = \sum_{ \bm{r} }  p_{i}(\bm{r}) \prod^\infty_{k=1} s_k^{r_k},\quad i\in\mathcal{S}.
\end{equation}
The mean progeny matrix $M$ is an infinite matrix whose entries are given by
\[
M_{ij} =  \left. \frac{\partial G_i ( \bm{s} )}{\partial s_j }  \right|_{\bm{s}= \bm{1}}, \quad \text{ for } i,j \in \mathcal{S}, 
\]
where $M_{ij}$ can be interpreted as the expected number of type $j$ children born to a parent of type $i$. We assume that the row sums of $M$ are finite, that is, the expected total number of direct offspring of an individual of any type is finite. It is sometimes convenient to associate a graph to the mean progeny matrix whose set of vertices corresponds to the set of types $\mathcal{S}$, and in which there is an oriented edge between nodes $i$ and $j$ with weight $M_{ij}$ if and only if $M_{ij}>0$. We shall refer later to this graph as the \textit{mean progeny representation graph}. We say that there is a path from type $i$ to type $j$ if such a (directed) path exists in the mean progeny representation graph.
The process $\{ \bm{Z}_n \}$ is \emph{irreducible} if there is a path between every pair of nodes.

We distinguish between two types of extinction events: the \textit{global }extinction event, $\{\lim_{n \to \infty} | \bm{Z}_n | = 0\}$, corresponding to the event that the whole population eventually becomes extinct;  and the \textit{partial} extinction event, $\{ \forall l {\geq 1}: \lim_{n \to \infty}Z_{n,l}=0\}$, corresponding to the event that all types eventually become extinct. Note that in the finite-type case, both events are equivalent.
The conditional {global} extinction probability vector, given the initial type, is $\bm{q}=(q_1,q_2,\ldots)$, where
\[
q_i = \mbP \left( \lim_{n \to \infty} | \bm{Z}_n | = 0 \mid \varphi_0=i \right),
\] and
the conditional {partial} extinction probability vector, given the initial type, is $\bm{\tilde{q}}=(\tilde{q}_1,\tilde{q}_2,\ldots)$, where
\[
\tilde{q}_i= \mbP\left( \forall l \geq 1: \lim_{n \to \infty} Z_{n,l}=0 \mid \varphi_0=i\right).
\]
It is clear that global extinction implies partial extinction, that is,  $\bm{q} \leq \bm{\tilde{q}}$. The vectors $\bm{q}$ and $\bm{\tilde{q}}$ are both solutions to the fixed point equation \eqref{Fixed Point}.
Additionally, $\bm{q}$ is the minimal non-negative solution of \eqref{Fixed Point}, whereas $\bm{\tilde{q}}$ is not necessarily the minimal non-negative solution. 

{
In our illustrative examples we shall make use of a process defined for any MGWBP $\vc V:=\{ \bm{V}_n \}_{n \geq 0}$ with {type set} $\mathcal{S}_V \subseteq \mathcal{S}$, which was previously considered by
\cite{Zuc09,Com07,Gan05}, {among others}. We refer to this process as the \emph{type-$i$ branching process embedded with respect to {$\{\bm{V}_n \}_{n\geq 0}$}} and denote it by $\{E_n^{(i)}(\bm{V})\}_{n\geq 0}$. The sample paths of {$\{E_n^{(i)}(\bm{V})\}$} are constructed from those of $\{ \bm{V}_n : \varphi_0=i \}$ by taking all type-$i$ individuals that appear in $\{ \bm{V}_n  \}$ and defining the direct descendants of these individuals as their closest (in generation) type-$i$ descendants in $\{ \bm{V}_n  \}$. The process $\{E_n^{(i)}(\bm{V})\}$ evolves as a (single-type) Galton-Watson process whose extinction probability is equivalent to the probability that type $i$ becomes extinct in $\{ \bm{V}_n \}$. While we will use this fact directly in Example 1,
 it also implies that type $i$ survives with positive probability in $\{\bm{V}_n\}$ if and only if the mean number of offspring in $\{E_n^{(i)}(\bm{V})\}$ is strictly greater than 1, that is,
\[
m_{E_n^{(i)}(\bm{V})} := M_{ii}+ \sum_{n=2}^\infty \left( \sum_{i_1, \dots , i_{n-1} \in \mathcal{S} \backslash \{i\}} M_{ii_1} M_{i_1 i_2} \dots M_{i_{n-1}i} \right)>1.
\]
Observe that $m_{E_n^{(i)}(\bm{V})}$ can be identified as the weighted sum of all first return paths to $i$ in the mean progeny representation graph associated with the branching process $\{ \bm{V}_n \}$. In the irreducible case, if $|\mathcal{S}_V|<\infty$, then 
$$m_{E_n^{(i)}(\bm{V})} \leq 1\quad \Leftrightarrow \quad\rho(M)\leq 1\quad\Leftrightarrow\quad\vc q=\tilde{\vc q}=\vc 1,$$
where $\rho(M)$ denotes the Perron-Frobenius eigenvalue of the mean progeny matrix $M$; if $|\mathcal{S}_V|=\infty$, then 
$$m_{E_n^{(i)}(\bm{V})} \leq 1\quad \Leftrightarrow \quad\nu(M)\leq 1\quad\Leftrightarrow\quad\tilde{\vc q}=\vc 1,$$
where $\nu(M)$ denotes the convergence norm of $M$; see for instance \cite{Zuc11}. }
}

In the sequel we {adopt} the shorthand notation $\mbP_i(\cdot):=\mbP(\cdot | \varphi_0=i)$ and $\mbE_i(\cdot):=\mbE(\cdot | \varphi_0=i)$. For any $k \geq 1$, we define the partition $T_k:= \{ 1,2,...,k\}$ and $T^c_k := \{ k +1, k+2,...\}$ of the set of types $\mathcal{S}=\{1,2,3,... \}$, and we let
\[
\tau_k(\omega) = \inf\left\{n \geq 0 : \sum_{i=k+1}^{\infty} {Z}_{n,i}(\omega)>0 \right\}
\]
be the first passage time to $T^c_k$. Note that for each $\omega\in\Omega$,  $\{\tau_k(\omega)\}_{{ k\geq 1}}$ forms a increasing sequence in $k$.

\section{Pathwise construction of finite-type branching processes on $(\Omega,\mathcal{F},\mbP)$}\label{ftbp}


{In this section we construct the sequences of finite-type 
  processes $\{\bm{\tilde{Z}}_n^{(k)} \}_{n \geq 0}$, $\{ \bm{Z}_n^{(k)} \}_{n \geq 0}$ and $\{\bm{\bar{Z}}_n^{(k)} \}_{n \geq 0}$ on $(\Omega,\mathcal{F},\mbP)$. The first two sequences were studied in \cite{haut12}, while the latter has not been previously considered. This construction, {which was not detailed in \cite{haut12}}, plays a key role in the remainder of the paper.}

For each $k \geq 1$, the realisations of $\{\bm{\tilde{Z}}_n^{(k)} \}$ are constructed pathwise from those of $\{ \bm{Z}_n \}$ by removing the descendants of all individuals of type $i \in T^c_k$. More specifically, for each $\omega \in \Omega$ and individual $I= \langle i_1 j_1 \dots i_{n} j_{n} \rangle \in \mathcal{J}$ of type $j=j_n$, we let \soph{$\tilde{\bm{N}}^{(k)}(\omega,I)=\bm{N}(\omega, I)\,\mathds{1}\{j\leq k\}$. The condition of appearance of an individual in the truncated branching process is then the same as \eqref{appear}, replacing $N_{j\cdot}(\omega,I)$ by $\tilde{{N}}_{j\cdot}^{(k)}(\omega,I)$, and the definition of the population size vector $\bm{\tilde{Z}}^{(k)}_n(\omega)$ is analogous to \eqref{Zn}.} Consequently, in $\{\bm{\tilde{Z}}_n^{(k)} \}$
\begin{itemize}
\item[{{\it (i)}}]  all types  in $T_k$ have the same progeny as the corresponding types in $\{ \bm{ Z}_n \}$, and
\item[{ {\it (ii)}}]  all types in $T^c_k$ die with no offspring; these types are said to be \emph{sterile}.
\end{itemize}
We denote by $\bm{\tilde{q}}^{(k)}$ the global extinction probability vector of $\{\bm{\tilde{Z}}_n^{(k)} \}$. Since all types larger than $k$ are sterile in $\{\bm{\tilde{Z}}_n^{(k)} \}$, the truncated process behaves effectively like a finite-type branching process on the set of types $\{1,2,\ldots,k\}$.  It is clear that $\tilde{q}_i^{(k)}=1$ for all $i>k$, so the computation of $\bm{\tilde{q}}^{(k)}$ reduces to solving a finite system of $k$ equations.  {It was shown in \cite{haut12} that} the sequence $\{\bm{\tilde{q}}^{(k)}\}$ is monotone decreasing and converges pointwise to $\bm{\tilde{q}}$.

Similarly, for each $k \geq 1$ the process $\{ \bm{Z}^{(k)}_n \}$ is constructed pathwise from realisations of $\{ \bm{Z}_n \}$ by removing all individuals of type $i \in T^c_k$ and their descendants, and replacing each pruned branch with an infinite line of descent made up of type $\Delta$ individuals. More formally, for each $\omega \in \Omega$, the population size vector $\bm{Z}_n^{(k)}(\omega)$ has entries
\[
{Z}_{n,i}^{(k)} (\omega)= 
\begin{cases}
\sum^n_{j=0} \sum_{l=k+1}^\infty \tilde{Z}^{(k)}_{j,l}(\omega), & i=\Delta \\
\tilde{Z}_{n,i}^{(k)}( \omega), & 1 \leq i \leq k \\
0, & i >k.
\end{cases}
\]
As a consequence, in $\{ \bm{Z}^{(k)}_n \}$
\begin{itemize}
\item[{ {\it (i)}}]  all types in $T_k$ have the same progeny distribution as the corresponding types in $\{ \bm{Z}_n \}$, and
\item[{ {\it (ii)}}] all types in $T_k^c$ are instantaneously replaced by the absorbing type $\Delta$, which at each generation produces a single type $\Delta$ progeny {with probability one}.
\end{itemize} 
Once a type $\Delta$ individual is born, $\{ \bm{Z}^{(k)}_n \}$ does not become extinct. In this sense, individuals of type $\Delta$ can be thought of as \textit{immortal}. We denote by $\bm{q}^{(k)}$ the global extinction probability vector of $\{ \bm{Z}_n^{(k)} \}$; it contains only finitely many non-zero entries since $q_i^{(k)}=0$ for all $i>k$. It is clear that $\{\bm{Z}_n^{(k)}\}$ becomes extinct if and only if $\{\vc Z_n\}$ becomes extinct before the birth of the first individual with a type in $T_k^c$. It {was} proved in \cite{haut12} that the sequence $\{\bm{q}^{(k)}\}$ is monotone increasing and converges pointwise to $\bm{q}$.

\medskip

For {each} $k\geq 1$, we construct recursively the truncated and augmented branching process {$\{ \bm{\bar{Z}}^{(k)}_n\}_{ n \geq 0}$ for} which
\begin{itemize}
\item[{ {\it (i)}}] all types in $T_k$ have the same progeny distribution as the corresponding types in $\{ \bm{Z}_n \}$, and
\item[{ {\it (ii)}}] all types in $T_k^c$ are instantaneously, and independently of each other, replaced by type $X\in\{1,...,k\}$ which is selected using the probability distribution $\bm{\alpha}^{(k)}$. The replaced individual then generates new individuals according to the progeny distribution of its type.
\end{itemize} 
To construct the sample paths of $\{ \bm{\bar{Z}}_n^{(k)}\}_{n \geq 0}$ we first augment the probability space $(\Omega,\mathcal{F},\mbP)$ to carry the sequence of independent random variables $\{X^{(k)}_{l}(I)\}_{l,k \in \mbN, I \in \mathcal{J}}$. For each $k\geq 1$, these random variables take values in $\{1, \dots, k\}$ and have probability distribution $\bm{\alpha}^{(k)}=(\alpha^{(k)}_1, \alpha^{(k)}_2, \dots, \alpha^{(k)}_k)$, where $\alpha^{(k)}_i=\mbP(X^{(k)}_{l}(I)=i)$. We interpret $X^{(k)}_{l}(\omega,I)$ as the replacement type of the $l$th offspring of type strictly larger than $k$ born to $I$ for the realisation $\omega$. Let $N_{(k,\infty)} (\omega,I)=\sum_{j=k+1}^\infty N_j(\omega,I)$ and define $\bm{\bar{N}}^{(k)}(\omega,I)$ with entries,
\[
\bar{N}_i^{(k)}(\omega,I) = 
\begin{cases}
N_i(\omega,I) + \sum^{N_{(k,\infty)} (\omega,I)}_{l=1} \bm{1} \{ X^{(k)}_{l}(\omega,I) = i \}, & 1 \leq i \leq k \\
0, & i>k. 
\end{cases}
\]
\soph{The condition of appearance of an individual in the truncated and augmented branching process is then the same as \eqref{appear}, replacing $N_{j\cdot}(\omega,I)$ by $\bar{{N}}_{j\cdot}^{(k)}(\omega,I)$, and the definition of the population size vector $\bm{\bar{Z}}^{(k)}_n(\omega)$ is analogous to \eqref{Zn}.} 

An illustration of $\{ \bm{Z}_n \}$, $\{ \bm{\tilde{Z}}^{(k)}_n\}$, $\{ \bm{Z}^{(k)}_n\}$ and $\{ \bm{\bar{Z}}^{(k)}_n\}$ is given in Figure \ref{allproc} for $k=2$ and a specific $\omega\in\Omega$. Note that $\{ \bm{\tilde{Z}}^{(2)}_n\}$ and $\{ \bm{Z}^{(2)}_n\}$ are both functions of $\{ \bm{Z}_n \}$ whereas in $\{ \bm{\bar{Z}}^{(2)}_n \}$ there are two subtrees with root 1, constructed using information redundant in $\{ \bm{Z}_n \}$, in place of the two sterile individuals in $\{ \bm{\tilde{Z}}^{(2)}_n \}$.

\begin{figure}\label{allproc}
\hspace{-1.6cm}\begin{tikzpicture}[->,>=stealth',level/.style={sibling distance = 2.5cm/#1,
  level distance = 0.8cm}] 
 $\{\bm{Z}_n \}\phantom{\sum_{j}}$
\node [arn_n] {1}
    child{ node [arn_n] {2} 
            child{ node [arn_n] {4}            								
            }                          
    }
    child{ node [arn_n] {3} 
            child{ node [arn_n] {1} 
							child{ node [arn_n] {4}}							
            }
            child{ node [arn_n] {4}
							child{ node [arn_n] {2}}
							child{ node [arn_n] {3}}
            } 
		}
;  
\end{tikzpicture} 
\begin{tikzpicture}[->,>=stealth',level/.style={sibling distance = 1.7cm/#1,
  level distance = 0.8cm}] 
  $\{\bm{\tilde{Z}}_n^{(2)} \}\phantom{\sum_{j}}$
\node [arn_n] {1}
    child{ node [arn_n] {2} 
            child{ node [arn_n] {4}            								
            }                          
    }
    child{ node [arn_n] {3} 
            child{ node [white] {1} 
						[white]  	child{ node [white] {4}}							
            }
          child{ node [white] {4} [white]  
							child{ node [white] {2}}
							child{ node [white] {3}}
            } 
		}
		;
\end{tikzpicture} 
\begin{tikzpicture}[->,>=stealth',level/.style={sibling distance = 1.7cm/#1,
  level distance = 0.8cm}] 
  $\{\bm{Z}_n^{(2)} \}\phantom{\sum_{j}}$
  \node [arn_n] {1}
    child{ node [arn_n] {2} 
            child{ node [arn_x] {$\Delta$} 
            	child{ node [arn_x] {\scalebox{.55}{$\vdots$}}  }           								
            }                          
    }
    child{ node [arn_x] {$\Delta$}
    	child{ node [arn_x] {$\Delta$} 
		child{ node [arn_x] {\scalebox{.55}{$\vdots$}}  }
	}            
		}
;  
\end{tikzpicture} 
\hspace{2mm}
\begin{tikzpicture}[->,>=stealth',level/.style={sibling distance = 2.5cm/#1,
  level distance = 0.8cm}] 
  $\{\bm{\bar{Z}}_n^{(2)} \}\phantom{\sum_{j}}$
\node [arn_n] {1}
    child{ node [arn_n] {2} 
            child{ node [arn_n] {1}   
		child{ node [arn_n] {1}}
		child{ node [arn_n] {2}}      								
            }                          
    }
    child{ node [arn_n] {1}          
		}
;  
\end{tikzpicture}
\caption{A visualisation of $\{ \bm{Z}_n \}$, $\{ \bm{\tilde{Z}}^{(2)}_n\}$, $\{ \bm{Z}^{(2)}_n\}$ and $\{ \bm{\bar{Z}}^{(2)}_n\}$ corresponding to a specific realisation $\omega \in \Omega$.  It is such that $\bm{r}_{\langle 0 \rangle}=\bm{e}_2+ \bm{e}_3$, $\bm{r}_{\langle 12 \rangle}=\bm{e}_4$, $\bm{r}_{\langle 13 \rangle}=\bm{e}_1+\bm{e}_4$, $\bm{r}_{\langle 1311 \rangle}=\bm{e}_4$, $\bm{r}_{\langle 1314 \rangle}=\bm{e}_2+\bm{e}_3$ and $\bm{r}_{\langle 1214 \rangle}=\bm{r}_{\langle 131114 \rangle}=\bm{r}_{\langle 131412 \rangle}=\bm{r}_{\langle 131413 \rangle}=\bm{0}$ are used to construct $\{ \bm{Z}_n \}$, $\{ \bm{\tilde{Z}}^{(2)}_n\}$ and $\{ \bm{Z}^{(2)}_n\}$, and additionally $X^{(2)}_1(\omega, \langle 0 \rangle)=1$, $\bm{r}_{\langle 11 \rangle}=\bm{0}$, $X^{(2)}_1(\omega, \langle 12 \rangle)=1$, $\bm{r}_{\langle 1211 \rangle}=\bm{e}_1+\bm{e}_2$ and $\bm{r}_{\langle 121111 \rangle}=\bm{r}_{\langle 121112 \rangle}=\bm{0}$ are used to construct $\{ \bm{\bar{Z}}^{(k)}_n \}$. All other information contained in $\omega$ is not required. }
\end{figure}

We denote by $\bm{\bar{q}}^{(k)}$ the global extinction probability vector of $\{ \bm{\bar{Z}}^{(k)}_n \}$. The vector $\bm{\bar{q}}^{(k)}$ contains infinitely many entries, and is such that for all $i>k$, $\bar{q}_i^{(k)}=\sum^k_{j=1} \alpha^{(k)}_j \bar{q}_j^{(k)}$; this represents the probability that the daughter process of a replaced individual becomes extinct, and will be denoted by $\bm{\alpha}^{(k)} \bm{\bar{q}}^{(k)} := \sum^k_{j=1} \alpha^{(k)}_j \bar{q}_j^{(k)}$ in the sequel. The computation of $\bm{\bar{q}}^{(k)}$ again reduces to finding the extinction probability vector of a MGWBP with a finite type set. 
\soph{The pseudo-code for the computation of the three sequences $\{\bm{{q}}^{(k)}\}$, $\{\bm{\tilde{q}}^{(k)}\}$, and $\{\bm{\bar{q}}^{(k)}\}$ is provided in Appendix A.}

The goal of the next section is to determine sufficient conditions for the convergence of the sequence $\{\bm{\bar{q}}^{(k)}\}_{k\geq 1}$ to  $\vc q$. { Unlike} the sequences $\{\vc q^{(k)}\}$ and $\{\tilde{\vc q}^{(k)}\}$, the convergence of the sequence $\{\bm{\bar{q}}^{(k)}\}$ may not be monotone. We show in the next lemma that $\{\bm{\bar{q}}^{(k)}\}$ is however always caught between $\{\vc q^{(k)}\}$ and $\{\tilde{\vc q}^{(k)}\}$.

\begin{lemma}\label{lem1}
For any $k \geq 1$ and replacement distribution $\bm{\alpha}^{(k)}$,
\[
\bm{q}^{(k)} \leq \bm{\bar{q}}^{(k)} \leq \bm{\tilde{q}}^{(k)}.
\]
\end{lemma}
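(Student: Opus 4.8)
The plan is to prove the two inequalities separately, in each case pathwise on the augmented probability space carrying $\omega$ together with the replacement variables $\{X^{(k)}_l(I)\}$, so that an inclusion between extinction events of two of the processes becomes, after conditioning on $\varphi_0=i$, the asserted coordinatewise bound. The coordinates $i\in T^c_k$ require no work, since there $q^{(k)}_i=0$, $\bar q^{(k)}_i=\bm{\alpha}^{(k)}\bm{\bar q}^{(k)}\le 1$ and $\tilde q^{(k)}_i=1$; so I would restrict attention to $i\in T_k$.

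For $\bm{q}^{(k)}\le\bm{\bar q}^{(k)}$ I would invoke the characterisation recorded above (when $\{\bm{Z}^{(k)}_n\}$ is introduced): $\{\bm{Z}^{(k)}_n\}$ becomes extinct exactly when $\{\bm{Z}_n\}$ becomes extinct without ever producing an individual of a type in $T^c_k$. On that event $N_{(k,\infty)}(\omega,I)=0$ for every $I$, so no replacement is ever performed in the construction of $\{\bm{\bar Z}^{(k)}_n\}$ and its sample path coincides with that of $\{\bm{Z}_n\}$, which is extinct there. Hence the extinction event of $\{\bm{Z}^{(k)}_n\}$ is contained in that of $\{\bm{\bar Z}^{(k)}_n\}$, and applying $\mbP_i$ yields $q^{(k)}_i\le\bar q^{(k)}_i$ for all $i\in T_k$. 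This half is essentially immediate.

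The inequality $\bm{\bar q}^{(k)}\le\bm{\tilde q}^{(k)}$ is the substantial one, because here the two processes genuinely differ — $\{\bm{\tilde Z}^{(k)}_n\}$ deletes every type-$T^c_k$ offspring, while $\{\bm{\bar Z}^{(k)}_n\}$ keeps it, relabelled as a type-$T_k$ individual that reproduces — so no event makes them coincide. I see two routes. The pathwise one is an embedding: fixing a labelling of offspring in which the genuine type-$T_k$ children of an individual keep their addresses when the relabelled former-type-$T^c_k$ children are appended, the type-$T_k$ sub-population of $\{\bm{\tilde Z}^{(k)}_n\}$ is contained, generation by generation, in that of $\{\bm{\bar Z}^{(k)}_n\}$; since all types in $T^c_k$ are sterile in $\{\bm{\tilde Z}^{(k)}_n\}$, that process is extinct the moment its type-$T_k$ sub-population dies out, which is forced on the extinction event of $\{\bm{\bar Z}^{(k)}_n\}$, and conditioning on $\varphi_0=i$ gives $\bar q^{(k)}_i\le\tilde q^{(k)}_i$. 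The analytic route avoids this bookkeeping: writing $\bm{\bar G}^{(k)}$ and $\bm{\tilde G}^{(k)}$ for the respective progeny generating functions, one checks that for $i\in T_k$ they agree except that, evaluated at $\bm{\tilde q}^{(k)}$, each type-$T^c_k$ offspring contributes a factor $\tilde q^{(k)}_j=1$ in $\bm{\tilde G}^{(k)}$ and a factor $\bm{\alpha}^{(k)}\bm{\tilde q}^{(k)}=\sum_{j\le k}\alpha^{(k)}_j\tilde q^{(k)}_j\le 1$ in $\bm{\bar G}^{(k)}$; monotonicity of generating functions then gives $\bm{\bar G}^{(k)}(\bm{\tilde q}^{(k)})\le\bm{\tilde G}^{(k)}(\bm{\tilde q}^{(k)})=\bm{\tilde q}^{(k)}$, so $\bm{\tilde q}^{(k)}$ is a non-negative super-solution of $\bm{s}=\bm{\bar G}^{(k)}(\bm{s})$, and since the functional iterates of $\bm{\bar G}^{(k)}$ started at $\bm{0}$ increase to the minimal non-negative solution $\bm{\bar q}^{(k)}$, we conclude $\bm{\bar q}^{(k)}\le\bm{\tilde q}^{(k)}$.

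The main obstacle is this second inequality: comparing two trees that are genuinely different rather than one obtained from the other by pruning or grafting along a single event. In the pathwise version the delicate point is pinning down the consistent labelling that exhibits $\{\bm{\tilde Z}^{(k)}_n\}$'s type-$T_k$ sub-population as a sub-forest of $\{\bm{\bar Z}^{(k)}_n\}$'s; in the analytic version it is the careful (but routine) identification of $\bm{\bar G}^{(k)}$ and $\bm{\tilde G}^{(k)}$ and of $\bm{\bar q}^{(k)}$ as the minimal non-negative fixed point, after which only monotonicity is needed. I would most likely present the pathwise argument for the first inequality and the super-solution argument for the second.
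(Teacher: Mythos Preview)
Your proposal is correct. The paper's own proof is a single pathwise line: it asserts the chain of inclusions
\[
\{\omega:\lim_n|\bm{Z}^{(k)}_n(\omega)|=0\}\subseteq\{\omega:\lim_n|\bm{\bar Z}^{(k)}_n(\omega)|=0\}\subseteq\{\omega:\lim_n|\bm{\tilde Z}^{(k)}_n(\omega)|=0\}
\]
as ``clear from the pathwise construction'' and stops. Your argument for the first inclusion is exactly this. For the second, you regard the pathwise comparison as the delicate part and propose instead the analytic super-solution route; this is a genuinely different argument from the paper's, and it is perfectly valid. That said, you are overestimating the difficulty of the pathwise version: the Mode-style construction the paper uses already gives the consistent labelling for free, since for every individual $I$ of type $\le k$ and every $j\le k$ one has $\bar N^{(k)}_j(\omega,I)=N_j(\omega,I)+\text{(extras)}\ge N_j(\omega,I)=\tilde N^{(k)}_j(\omega,I)$, so any individual $\langle i_1 j_1\dots i_n j_n\rangle$ with all $j_l\le k$ that appears in $\{\bm{\tilde Z}^{(k)}_n\}$ appears under the same label in $\{\bm{\bar Z}^{(k)}_n\}$; the remaining individuals in $\{\bm{\tilde Z}^{(k)}_n\}$ are sterile leaves of type $>k$, so extinction of the bar process forces extinction of the tilde process one generation later. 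Your analytic route buys independence from the individual-level bookkeeping at the cost of identifying $\bm{\bar G}^{(k)}$ explicitly and invoking minimality of $\bm{\bar q}^{(k)}$; the paper's pathwise route buys a one-line proof at the cost of leaving the embedding implicit.
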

\begin{proof} From the pathwise construction of the branching processes, it is clear that
$$\{\omega: \lim_{n\to\infty} |{\vc Z}_n^{(k)}(\omega)|=0\}\subseteq \{\omega: \lim_{n\to\infty} |\bar{\vc Z}_n^{(k)}(\omega)|=0\}\subseteq \{\omega: \lim_{n\to\infty} |\tilde{\vc Z}_n^{(k)}(\omega)|=0\},$$ and the result follows.
\end{proof}

\begin{corollary}\label{qint} 
For any sequence $\{ \bm{\alpha}^{(k)} \}$ of replacement distributions,
$$ \vc q\leq \liminf_{k \to \infty} \bm{\bar{q}}^{(k)}\leq \limsup_{k \to \infty} \bm{\bar{q}}^{(k)}\leq \bm{\tilde{q}}.$$
\end{corollary}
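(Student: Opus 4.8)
The plan is to pass to the limit in the chain of inequalities provided by Lemma~\ref{lem1}, using the known monotone convergence of the two enveloping sequences established in \cite{haut12} and recalled in Section~\ref{ftbp}. Concretely, fix a type $i\in\mathcal{S}$. For every $k\geq i$ we have from Lemma~\ref{lem1} the scalar inequalities $q_i^{(k)}\leq \bar q_i^{(k)}\leq \tilde q_i^{(k)}$. I would first take $\liminf_{k\to\infty}$ and $\limsup_{k\to\infty}$ of the middle term, which exist in $[0,1]$ since the sequence is bounded, and sandwich them: $\liminf_k q_i^{(k)}\leq \liminf_k \bar q_i^{(k)}$ and $\limsup_k \bar q_i^{(k)}\leq \limsup_k \tilde q_i^{(k)}$.

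Next I would invoke the two facts already quoted in Section~\ref{ftbp}: $\{\vc q^{(k)}\}$ is monotone increasing and converges pointwise to $\vc q$, so $\liminf_k q_i^{(k)}=\lim_k q_i^{(k)}=q_i$; and $\{\tilde{\vc q}^{(k)}\}$ is monotone decreasing and converges pointwise to $\tilde{\vc q}$, so $\limsup_k \tilde q_i^{(k)}=\lim_k \tilde q_i^{(k)}=\tilde q_i$. Substituting these into the sandwich gives
\[
q_i\leq \liminf_{k\to\infty}\bar q_i^{(k)}\leq \limsup_{k\to\infty}\bar q_i^{(k)}\leq \tilde q_i
\]
for each $i$, and since $i$ was arbitrary the inequalities hold componentwise, i.e.\ as vector inequalities under the usual partial order. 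The middle inequality $\liminf\leq\limsup$ is automatic.

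This argument is essentially a two-line sandwich, so there is no serious obstacle; the only things to be careful about are bookkeeping ones. First, one must note that $\bar q_i^{(k)}$ is only defined (or only meaningfully compared) once $k$ is large enough relative to $i$ — but since we are taking $k\to\infty$ this is harmless, and the relations $\liminf$/$\limsup$ are unaffected by finitely many initial terms. Second, the convergence of $\{\vc q^{(k)}\}$ and $\{\tilde{\vc q}^{(k)}\}$ and the associated monotonicity are used as black boxes from \cite{haut12} (as recalled in Section~\ref{ftbp}), so I would simply cite them rather than reprove them. The corollary therefore follows immediately from Lemma~\ref{lem1} together with these two facts.
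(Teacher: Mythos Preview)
Your argument is correct and matches the paper's own proof, which simply states that the result is immediate from Lemma~\ref{lem1} together with $\bm{q}^{(k)}\to\bm{q}$ and $\bm{\tilde{q}}^{(k)}\to\bm{\tilde{q}}$. One small remark: you need not worry about $\bar q_i^{(k)}$ being defined only for $k$ large relative to $i$, since the paper defines $\bar q_i^{(k)}=\bm{\alpha}^{(k)}\bm{\bar q}^{(k)}$ for $i>k$ (and likewise $q_i^{(k)}=0$, $\tilde q_i^{(k)}=1$), so Lemma~\ref{lem1} holds for all $i$ and $k$.
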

\begin{proof} The result is immediate by Lemma \ref{lem1} since $\bm{q}^{(k)} \to \bm{q}$ and $\bm{\tilde{q}}^{(k)} \to \bm{\tilde{q}}$.\end{proof}

A consequence of Corollary \ref{qint} is that, when it exists, the limit of the sequence $\{\bm{\bar{q}}^{(k)}\}$ can only overestimate the probability of global extinction. In Section \ref{motivating}, we illustrate a situation where $\vc q<\lim_{k\to\infty} \bm{\bar{q}}^{(k)}<\bm{\tilde{q}}$.


\tikzset{
  treenode/.style = {align=center, inner sep=0pt, text centered,
    font=\sffamily},
  arn_n/.style = {treenode, circle, black, font=\sffamily\bfseries, draw=black,
    fill=white, text width=1.5em},
  arn_r/.style = {treenode, circle, red, draw=red, 
    text width=1.5em, very thick},
  arn_x/.style = {treenode, circle, white, font=\sffamily\bfseries, draw=black,
    fill=black, text width=1.5em}
}

\section{Sufficient conditions for the convergence of $\{{\bar{q}}^{(k)}\}$ to $q$}\label{suff_cond}

In this section, we assume that the sequence of replacement distributions $\{\vc\alpha^{(k)}\}$ satisfies a property slightly more general than tightness,
 that is, 
\begin{assumption}\label{tight}There exist {constants} $N_1,N_2 \geq 1$ and $a>0$, all independent of $k$, such that
\begin{equation*}\label{alpha condition}
\sum^{\min\{ N_1, k\} }_{i =1} \alpha_i^{(k)} \geq a\qquad \mbox{for all $k \geq N_2$.}
\end{equation*}
\end{assumption} 
{Situations where Assumption \ref{tight} fails to hold include $\bm{\alpha}^{(k)}=\vc e_k$ and $\bm{\alpha}^{(k)}=\vc 1/k$. These special cases will be considered in Section \ref{other_replace}. Replacement with a fixed type, however, satisfies Assumption \ref{tight}; for example, when $\bm{\alpha}^{(k)}=\vc e_1$, it holds with $N_1=1$, $N_2=1$ and $a=1$.} An example of sequence of replacement distributions satisfying Assumption \ref{tight} but which is not tight is $\vc\alpha^{(k)}=(a,0,\ldots,0,1-a)$ for some $0<a<1$.

\subsection{A motivating example}\label{motivating}
Assumption \ref{tight} alone is not a sufficient condition for the convergence of $\{\vc{\bar{q}}^{(k)}\}$ to the global extinction probability $\vc q$, as we illustrate in the next example in which individuals are {replaced by type 1 with probability one}.

\medskip
\noindent \textbf{Example 1.} Consider a two-parameter {irreducible} branching process $\{\bm{Z}_n \}$ with countably many types where, at death,  type-1 individuals produce a single type-2 individual with probability $a>0$ and no offspring with probability $1-a$, and each type-$i \in \{ 2, 3, ...\}$ individual produces a single type-$(i+1)$ offspring with probability one and a further Poisson$\left( b^{i-1} \right)$ type-1 individuals, where $0<b<1$. The progeny generating function of this process is thus given by 
$$G_1(\vc s)=a\,s_2+1-a,$$
and for $i \geq 2$,
$$G_i(\vc s)=\sum_{k\geq 0} \dfrac{(b^{i-1})^k}{k!}e^{-b^{i-1}} s_1^k\,s_{i+1}=\exp\{b^{i-1}(s_1-1)\}\,s_{i+1}.$$The corresponding mean progeny representation graph is shown in Figure~\ref{Example1}.
\begin{figure}
\centering
\begin{tikzpicture}
\tikzset{vertex/.style = {shape=circle,draw,minimum size=1.3em}}
\tikzset{edge/.style = {->,> = latex'}}
\node[vertex] (1) at  (0,0) {\small 1};
\node[vertex] (2) at  (2,0) {\small 2};
\node[vertex] (3) at  (4,0) {\small 3};
\node[vertex] (4) at  (6,0) {\small 4};
\node[vertex] (5) at  (8,0) {\small 5};


\draw[edge,above] (1) to node {\footnotesize $a$ } (2);
\draw[edge,above] (2) to node {\footnotesize $1$ } (3);
\draw[edge,above] (3) to node {\footnotesize $1$ } (4);
\draw[edge,above] (4) to node {\footnotesize $1$ } (5);

\draw[edge,above] (2) to[bend left=35] node {\footnotesize $b$ } (1);
\draw[edge,above] (3) to[bend left=35] node {\footnotesize $b^2$ } (1);
\draw[edge,above] (4) to[bend left=37] node {\footnotesize $b^3$ } (1);
\draw[edge,above] (5) to[bend left=40] node {\footnotesize $b^4$ } (1);

\node at (9,0) {$\dots$};


\end{tikzpicture}
\caption{\label{Example1}The mean progeny representation graph corresponding to Example 1.}
\end{figure}
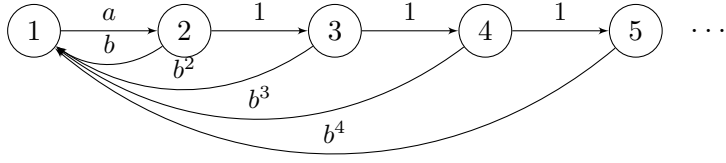

{We} assume that the population initially {contains a single} individual of type $1$.  
{ Note }that the probability of global extinction is $q_1=1-a$ and $q_i=0$ for all $i \geq 2$. 
{By irreducibility,
${\tilde{q}}_1$ is equal to the extinction probability of the  embedded type-$1$ process, $\{ E_n^{(1)}(\bm{Z}) \}$, that is, ${\tilde{q}}_1$ is the minimal nonnegative solution to 
$$x=1-a+a F(x)\,,\quad {\text{for } 0\leq x\leq 1\,,}
$$
where} $F(\cdot)$ is the probability generating function (p.g.f) of a sum of countably infinitely many independent Poisson random variables with respective parameters $b^{i-1}$, for $i\geq 2$, {and is given by}
\begin{equation}\label{F}F(x)=\prod_{i\geq 2}^\infty \exp\{b^{i-1}(x-1)\}=\exp\{b(1-b)^{-1}(x-1)\}.\end{equation} Hence $F(\cdot)$ is the p.g.f. of a Poisson random variable with parameter $b/(1-b)$. The corresponding mean progeny $m_{ E_n^{(1)}(\bm{Z})}=ab/(1-b)$ indicates that ${\tilde{q}}_1=1$ if and only if $a\leq (1-b)/b,$ in which particular cases $1-a=q_1<{\tilde{q}}_1$. {The left panel in} Figure \ref{f1} shows the difference $\tilde{q}_1-q_1$ as a function of the parameter values.

\begin{figure}
\centering
  \includegraphics[scale=.135]{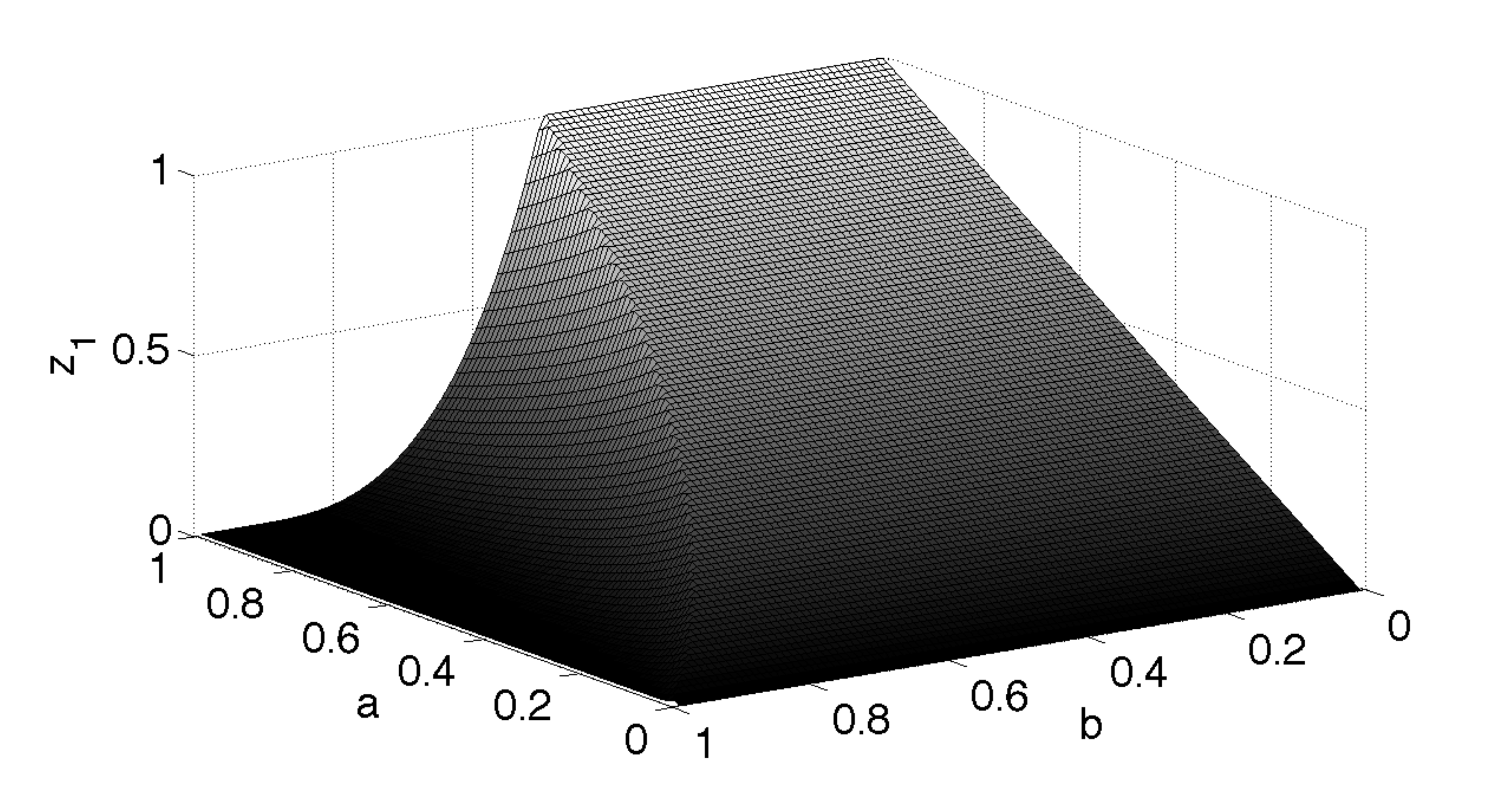}\hspace{-6mm}
    \includegraphics[scale=.135]{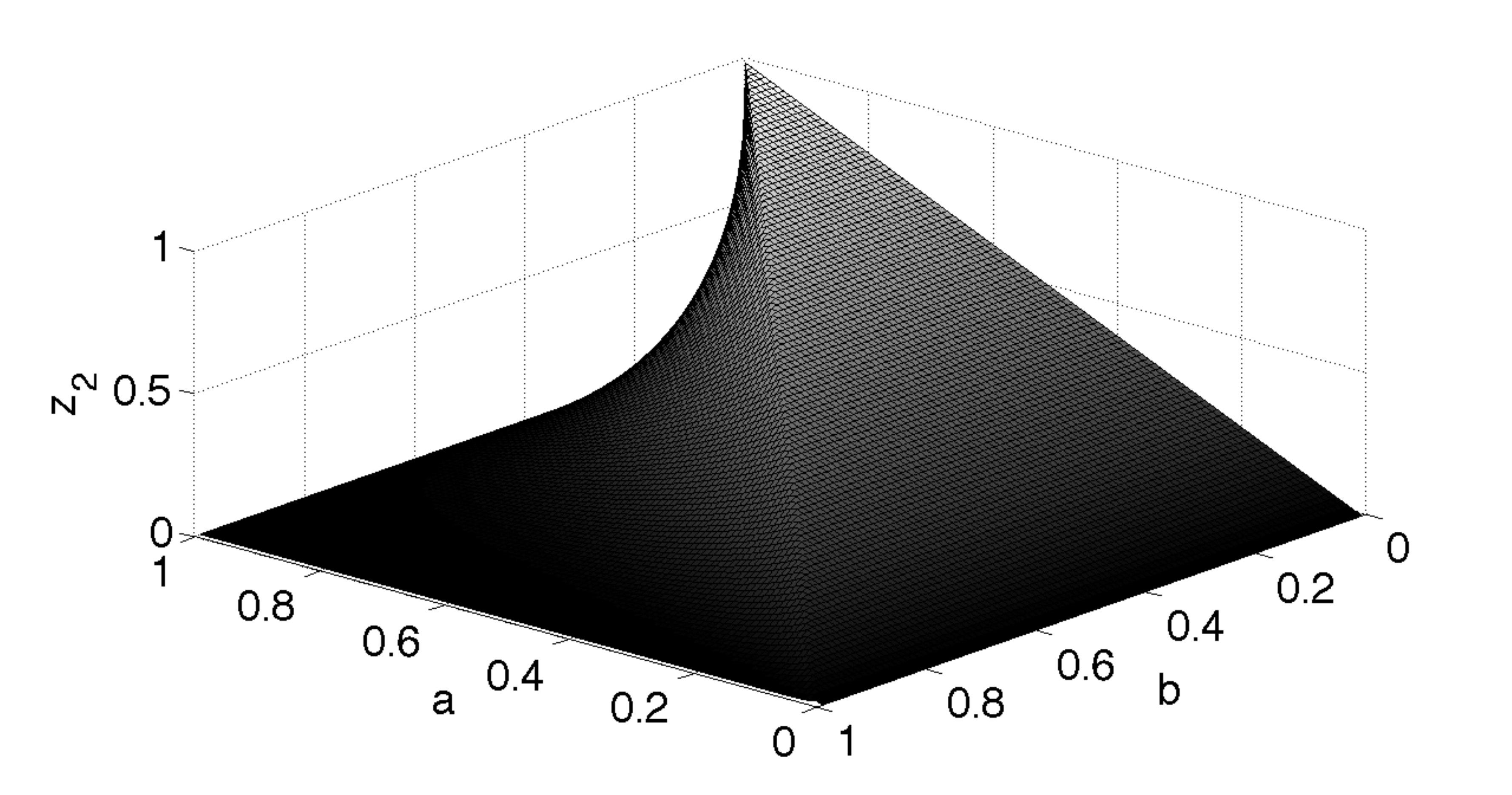}\hspace{-6mm}
    \includegraphics[scale=.135]{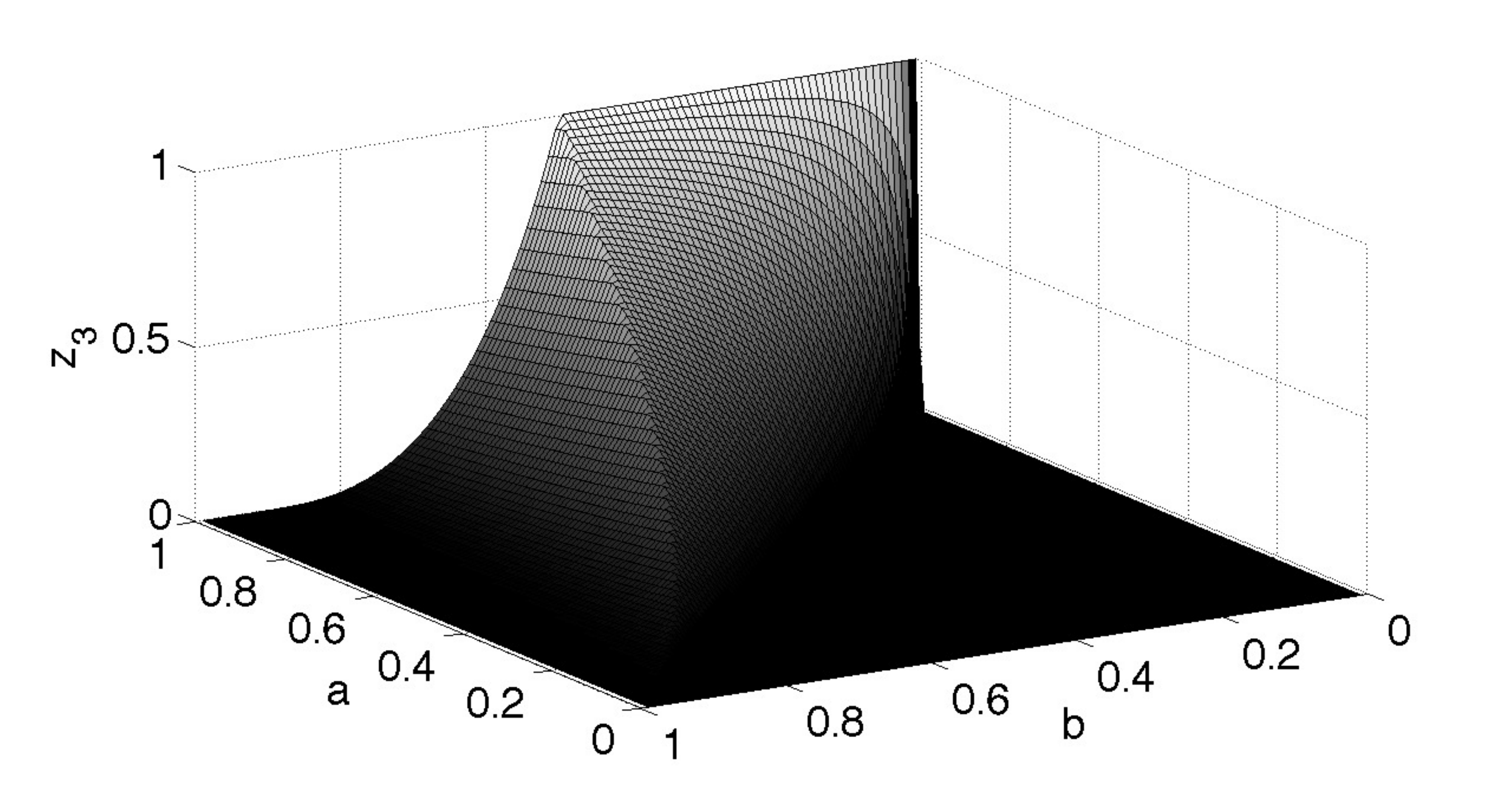}
 \caption{Differences in the extinction probabilities related to Example 1, plotted as a function of the parameters $a$ and $b$. Left panel: $z_1=\tilde{q}_1-q_1$. Middle panel: $z_2=\bar{q}_1-q_1$. Right panel: $z_3=\tilde{q}_1-\bar{q}_1$. }
    \label{f1}
\end{figure}

Now consider the process $\{ \bm{\bar{Z}}^{(k)}_n \}$ for $\vc\alpha^{(k)}=\vc e_1$, and its conditional extinction probability $\bar{q}^{(k)}_1$, given that $\varphi_0=1$. This irreducible branching process has finitely many types, hence it becomes extinct if and only if type 1 becomes extinct. {Thus,}  $\bar{q}^{(k)}_1$ corresponds to the extinction probability of the type-$1$ process embedded with respect to $\{\bm{\bar{Z}}^{(k)}_n \}$. 
The progeny generating function of $\{ E_n^{(1)}(\bar{\vc Z}^{(k)}) \}$, that we denote by $G_{1,k}(\cdot)$, is given by 
$$
G_{1,k}(x)=1-a+aF_k(x)x\,,\quad {\text{for } 0\leq x\leq 1\,,}
$$ 
where 
$$
F_k(x)=\prod_{i\geq 2}^{{k}} \exp\{b^{i-1}(x-1)\}=\exp\{b(1-b^{{k-1}})(1-b)^{-1}(x-1)\}
$$ is the p.g.f of a Poisson random variable with parameter $b(1-b^{{k-1}})(1-b)^{-1}$. 
Note that here we multiply $aF_k(x)$ by $x$ to account for the type $k+1$ descendant (instantaneously replaced by type 1) of each type 1 individual that has a type $2$ offspring.
By continuity of $G_{1,k}(\cdot)$, the limit $\bar{q}_1:=\lim_{k\to\infty}\bar{q}^{(k)}_1$ is the minimal nonnegative solution of  $${x=1-a+aF(x)x,}$$ where $F(x)$ is given in \eqref{F}. 
The corresponding mean progeny $m_{1,\infty}=G'_{1,\infty}(1)=a(1+b/(1-b))=a+m_{ E_n^{(1)}(\bm{Z})}$ indicates that $\bar{q}_1=1$ if and only if $a\leq 1-b,$ in which particular cases $\bar{q}_1>q_1=1-a$. { The middle panel in }Figure \ref{f1} shows the difference $\bar{q}_1-q_1$ as a function of the parameter values. This highlights the fact that the sequence $\{\bar{\vc q}^{(k)}\}$ does not always converge to the global extinction probability $\vc q$.

For completeness, we also provide in {the right panel of} Figure \ref{f1} the difference $\tilde{q}_1-\bar{q}_1$ as a function of the parameter values. From above, we have that if $a\leq 1-b$ then $q_1<\tilde{q}_1=\bar{q}_1=1$, so the sequence $\{\bar{\vc q}^{(k)}\}$ can potentially converge to the partial extinction probability.

\subsection{The seed process}

Example 1 illustrates the need to further explore the conditions under which $\{\bm{\bar{q}}^{(k)}\}$ converges to $\bm{q}$ as $k\to\infty$. 
Observe that, for any $k\geq 1$, we have 
\begin{align}
\bar{q}_i^{(k)}&=\mbP_i\left(\lim_{n\to\infty}|\bm{\bar{Z}}_n^{(k)} |=0\right)\nonumber\\
&= \mbP_i\left(\lim_{n\to\infty}|\bm{\bar{Z}}_n^{(k)} |=0\,,  \lim_{n\to\infty}|\bm{Z}_n^{(k)} |=0\right)\nonumber\\
&\qquad+\mbP_i\left( \lim_{n\to\infty}|\bm{\bar{Z}}_n^{(k)} |=0\,, \lim_{n\to\infty}|\bm{Z}_n^{(k)} |>0\right)\nonumber\\
\label{eq1}&= q_i^{(k)}+\sum_{x\geq 1} \left(\vc\alpha^{(k)}\bar{\vc q}^{(k)}\right)^x \mbP_i\left(|\bm S_k|=x\right),
\end{align}where $|\bm S_k|$ denotes the (finite) number of sterile types produced over the lifetime of $\{ \bm{\tilde{Z}}^{(k)}_n \}$, which are replaced by some random types in $\{ \bm{\bar{Z}}^{(k)}_n \}$ and immortal types in $\{ \bm{Z}_n^{(k)}\}$. To understand Equation \eqref{eq1} one may think of simulating the branching processes with $\varphi_0=i$ in two stages: by first constructing the path of $\{ \bm{\tilde{Z}}_n^{(k)} \}$, and then constructing those of $\{ \bm{{Z}}_n^{(k)} \}$ and $\{ \bm{\bar{Z}}_n^{(k)} \}$ by taking the outcome of $\{ \bm{\tilde{Z}}^{(k)}_n \}$, replacing the sterile individuals, and simulating their daughter processes according to the respective replacement and updating rules. Conditional on the first stage of simulation, there are two ways in which $\{ \bm{\bar{Z}}^{(k)}_n \}$ can die, either: $(i)$ $\{ \bm{\tilde{Z}}^{(k)}_n \}$ dies before producing a sterile type, in which case $\{ \bm{{Z}}^{(k)}_n \}$ also dies (this occurs with probability $q^{(k)}_{i}$), or $(ii)$ $\{ \bm{\tilde{Z}}^{(k)}_n \}$ dies after producing $1\leq x <\infty$ sterile individuals, in which case $\{\bm{\bar{Z}}^{(k)}_n \}$ dies with probability $( \bm{\alpha}^{(k)} \bm{\bar{q}}^{(k)} )^x$ in the second stage of simulation. 

Because $\bm{q}^{(k)} \to \bm{q}$, this generally indicates that in order for $\{\bm{\bar{q}}^{(k)}\}$ to converge to $\bm{q}$, we need to avoid cases where there is a positive probability that the number of sterile individuals produced over the lifetime of $\{ \bm{\tilde{Z}}_n^{(k)} \}$ remains positive and uniformly bounded for all $k$. {This is not satisfied} in Example 1 {as,} for any $a<1$, $0<b<1$ and for all $k\geq 1$,
\begin{equation*}
\mbP_1\left(|\bm S_k|=1\right)  > aF(0) >0.
\end{equation*}
We defer {a formal} statement of this idea until Lemma \ref{nconv} and now formally introduce the \emph{seed process} $\{ \bm{S}_k \}_{k\geq 1}$, defined from the paths of $\{ \bm{\tilde{Z}}^{(k)}_n \}$.

\begin{definition}\label{Seed def}
The seed process $\{\bm{S}_k=(S_{k,1},S_{k,2},...)\}_{k \geq 1}$ defined on {$( \Omega, \mathcal{F}, \mbP)$} is such that for any $\omega\in\Omega$, if $\lim_{n\to\infty}\bm{\tilde{Z}}^{(k)}_n(\omega)=\vc 0$, then
\begin{equation}
S_{k,i}(\omega) = 
\begin{cases}
0, & \text{if } i\leq k, \\
\sum^{\infty}_{n=1} {\tilde{Z}}^{(k)}_{n,i}(\omega), & \text{if } i>k,
\end{cases}
\end{equation}
whereas, if $\lim_{n\to\infty}\bm{\tilde{Z}}^{(k)}_n(\omega)>\vc 0$, then
\begin{equation}\label{catastrophe}
S_{k,i}(\omega) =0\quad \mbox{for all {$i \geq 1$}}.
\end{equation}
\end{definition}

\begin{figure}
\vspace{-0.6cm}
\begin{tikzpicture}[->,>=stealth',level/.style={sibling distance = 4cm/#1,
  level distance = 0.8cm}] 
 \node [white] {1}   
[white]  child{ node [arn_n] {1}
   child{ node [arn_n] {1} 
           [black]  child{ node [arn_n] {2}
            	     child{ node [arn_n] {3}}   
	     	     child{ node [arn_n] {4}}       								
            }                          
    }
  child{ node [arn_n] {3}
       [black]        child{ node [arn_n] {1} 
							child{ node [arn_n] {4}}							
            }
            child{ node [arn_n] {4}
							child{ node [arn_n] {2}}
							child{ node [arn_n] {5}}
            }     
		}
		child{ node [arn_n] {5}
       [black]  }
  }
  child{ node [white] {1}} 
child{ node [arn_n] {1}
    child{ node [arn_n] {2}            
            [black]   child{ node [arn_n] {3}}   
	     child{ node [arn_n] {4}
	     	child{ node [arn_n] {4}}
	     }       								                                    
    }
    child{ node [arn_n] {3}
        [black]     child{ node [arn_n] {3} 
		child{ node [arn_n] {4}}
		child{ node [arn_n] {4}
			child{ node [arn_n] {5}}	
		}							
            }
		}
   child{ node [arn_n] {5}
 [black]   child{ node [arn_n] {5}
   child{ node [arn_n] {5}
   child{ node [arn_n] {5}
   }
   }
   }
   }
   }
 ;  
\end{tikzpicture} 
\caption{\label{seedevo}A realisation of $\{ \bm{Z}_n \}$ (left). { The corresponding seed process, as introduced in Definition \ref{Seed def}, is $\bm{S}_0=(1,0,0,\ldots)$, $\bm{S}_1=(0,1,1,0,1,\hdots)$, $\bm{S}_2=(0,0,2,1,1,\hdots)$, $\bm{S}_3=(0,0,0,3,1,\hdots)$, $\bm{S}_4=(0,0,0,0,2,\hdots)$ and $\bm{S}_5=(0,0,0,0,0,\hdots)$. The right graph illustrates the tree representation of $\{ \bm{S}_k \}$.}}
\end{figure}

We take the convention that $|\vc S_{k}|=0$ when $\{ \bm{\tilde{Z}}^{(k)}_n \}$ does not become extinct in order to ensure that $ \mbP_{i}(| \bm{S}_k | < \infty)=1$ for all fixed $i$ and $k$. It is not hard to show that $\{\bm{S}_k\}$ forms a Markov chain on $\mathbb{N}^\infty$. More precisely, $\{\bm{S}_k\}$ is a branching process with countably many types, in which the progeny distribution depends on the generation (branching process in varying environment), and which can undergo {total catastrophe}. \soph{Such a total catastrophe happens at generation $k+1$ in the seed process for some $\omega\in \Omega$ if $\bm{\tilde{Z}}^{(k)}_n(\omega)$ becomes extinct while $ \bm{\tilde{Z}}^{(k+1)}_n(\omega) $ survives.}
Like $\{ \bm{Z}_n \}$, each outcome of $\{ \bm{S}_k \}$ can be represented as a tree. An illustration of this is given in Figure \ref{seedevo} where an outcome of $\{ \bm{Z}_n \}$ is given along with the corresponding outcome of $\{ \bm{S}_k \}$. 
{Observe that in the nearest neighbour branching random walk, when $\bm{\tilde{q}}=\bm{1}$, the seed process reduces to the first modified process used in the proof of  \cite[Theorem 2.9]{Gan10}}.
{In our generalised construction}, when $\{ \bm{Z}_n \}$ dies, individuals in the $k$th generation of the seed process with type strictly greater than $k+1$ produce only one exact copy of themselves, and the number of generations $\{ \bm{S}_k \}$ lives is equivalent to the largest type produced in $\{ \bm{Z}_n \}$.



{The seed process enjoys several other properties which will be exploited to prove {Theorem \ref{general converge}} on the convergence of $\{\bar{\vc q}^{(k)}\}$ to $\vc q$ stated in the next subsection.}

\begin{lemma}\label{absorb}
{ The state $\bm{0}$ is absorbing for the seed process $\{ \bm{S}_k \}$.}
\end{lemma}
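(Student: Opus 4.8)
The plan is to show that if $\bm{S}_k(\omega)=\bm{0}$ then $\bm{S}_{k+1}(\omega)=\bm{0}$, by tracing through Definition \ref{Seed def} and the relationship between consecutive truncated processes $\{\bm{\tilde{Z}}^{(k)}_n\}$ and $\{\bm{\tilde{Z}}^{(k+1)}_n\}$. First I would unpack what $\bm{S}_k(\omega)=\bm{0}$ means. By Definition \ref{Seed def}, there are two ways this can happen: either $\{\bm{\tilde{Z}}^{(k)}_n(\omega)\}$ does not become extinct (the total-catastrophe case \eqref{catastrophe}), or $\{\bm{\tilde{Z}}^{(k)}_n(\omega)\}$ does become extinct but produces no individuals of type strictly larger than $k$ over its entire lifetime, i.e.\ $\sum_{n\geq 1}\tilde{Z}^{(k)}_{n,i}(\omega)=0$ for all $i>k$. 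I would handle these two cases separately.

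In the second case — $\{\bm{\tilde{Z}}^{(k)}_n(\omega)\}$ extinct with no type in $T^c_k$ ever appearing — I claim that $\{\bm{\tilde{Z}}^{(k)}_n(\omega)\}$ and $\{\bm{\tilde{Z}}^{(k+1)}_n(\omega)\}$ coincide as paths. Indeed, from the pathwise construction in Section \ref{ftbp}, $\{\bm{\tilde{Z}}^{(k+1)}_n\}$ differs from $\{\bm{\tilde{Z}}^{(k)}_n\}$ only in that type $k+1$ is no longer sterile: individuals of type $k+1$ retain their original progeny. But if no individual of type $k+1$ (indeed no type in $T^c_k$) ever appears in $\{\bm{\tilde{Z}}^{(k)}_n(\omega)\}$, this modification is never triggered, so the two paths are identical. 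In particular $\{\bm{\tilde{Z}}^{(k+1)}_n(\omega)\}$ also becomes extinct and also never produces a type in $T^c_{k+1}\subseteq T^c_k$, whence $S_{k+1,i}(\omega)=0$ for all $i$. In the first case — $\{\bm{\tilde{Z}}^{(k)}_n(\omega)\}$ survives — I would argue that $\{\bm{\tilde{Z}}^{(k+1)}_n(\omega)\}$ also survives: since $\tilde{N}^{(k+1)}_i(\omega,I)\geq \tilde{N}^{(k)}_i(\omega,I)$ for every individual $I$ and every type $i$ (adding back the progeny of type-$(k+1)$ individuals only increases counts, and leaves the appearance condition \eqref{appear} of every individual present in $\{\bm{\tilde{Z}}^{(k)}_n\}$ intact), every individual appearing in $\{\bm{\tilde{Z}}^{(k)}_n(\omega)\}$ also appears in $\{\bm{\tilde{Z}}^{(k+1)}_n(\omega)\}$; hence $|\bm{\tilde{Z}}^{(k+1)}_n(\omega)|\geq|\bm{\tilde{Z}}^{(k)}_n(\omega)|\not\to 0$, so $\{\bm{\tilde{Z}}^{(k+1)}_n(\omega)\}$ does not become extinct either, and \eqref{catastrophe} gives $\bm{S}_{k+1}(\omega)=\bm{0}$.

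Combining the two cases, $\bm{S}_k(\omega)=\bm{0}$ implies $\bm{S}_{k+1}(\omega)=\bm{0}$ for every $\omega$, so $\bm{0}$ is absorbing. The main obstacle is getting the monotonicity of the truncated processes in $k$ precisely right from the pathwise definition — in particular being careful that in the surviving case the modification at level $k+1$ cannot destroy extinction-avoidance (it only adds mass), and in the extinct case that the absence of large types makes the two constructions literally agree pathwise rather than merely in distribution. Both are straightforward consequences of the coupling built in Section \ref{ftbp}, but they must be stated at the level of individual realisations $\omega$ rather than distributionally.
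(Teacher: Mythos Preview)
Your proposal is correct and takes essentially the same approach as the paper: split on whether $\{\bm{\tilde{Z}}^{(k)}_n(\omega)\}$ survives, and in each case use the pathwise monotonicity in $k$ of the truncated constructions from Section~\ref{ftbp}. The only cosmetic difference is that, in the ``extinct with no large types'' case, the paper phrases the conclusion via the immortal process (namely $\{\lim_n|\bm{Z}^{(k)}_n|=0\}\subseteq\{\lim_n|\bm{Z}^{(k+1)}_n|=0\}$), whereas you argue directly that the paths of $\{\bm{\tilde{Z}}^{(k)}_n\}$ and $\{\bm{\tilde{Z}}^{(k+1)}_n\}$ coincide; these are equivalent formulations of the same pathwise fact.
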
 

\begin{proof} Suppose $\vc S_k(\omega)=\vc 0$ for some $\omega\in\Omega$. Then either 
$\lim_{n\to\infty} |\tilde{\vc Z}_n^{(k)}(\omega)|>0$ or $\lim_{n\to\infty} |{\vc Z}_n^{(k)}(\omega)|=0$. In addition, by construction, $$\{\omega: \lim_{n\to\infty} |\tilde{\vc Z}_n^{(k)}(\omega)|>0\}\subseteq \{\omega: \lim_{n\to\infty} |\tilde{\vc Z}_n^{(k+1)}(\omega)|>0\}$$ and 
$$\{\omega: \lim_{n\to\infty} |{\vc Z}_n^{(k)}(\omega)|=0\}\subseteq \{\omega: \lim_{n\to\infty} |{\vc Z}_n^{(k+1)}(\omega)|=0\},$$which implies $\vc S_{k+1}(\omega)=\vc 0$.
%
\end{proof}

Additionally, we obtain an expression for the probability that $\{ \bm{S}_k \}$ has reached the absorbing state by generation $k$ in terms of the extinction probabilities of $\{ \bm{\tilde{Z}}^{(k)}_n \}$ and $\{ \bm{{Z}}^{(k)}_n \}$:

\begin{lemma}\label{absorb2}
$\mbP\left(|\bm S_k|=0\,|\,\varphi_0=i\right)=1-\tilde{q}_i^{(k)}+q_i^{(k)}$.
\end{lemma}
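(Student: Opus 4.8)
The plan is to split the event $\{|\bm S_k|=0\}$ according to whether the truncated process $\{\bm{\tilde Z}^{(k)}_n\}$ survives or dies out, and to match each of the two resulting pieces with an extinction event of $\{\bm Z^{(k)}_n\}$ or of $\{\bm{\tilde Z}^{(k)}_n\}$. Write $C_k:=\{\omega:\lim_{n\to\infty}\bm{\tilde Z}^{(k)}_n(\omega)>\vc 0\}$ for the catastrophe event. By Definition \ref{Seed def}, on $C_k$ we have $\bm S_k\equiv\vc 0$, so $C_k\subseteq\{|\bm S_k|=0\}$; and on $C_k^c$ we have $|\bm S_k|=\sum_{i>k}\sum_{n\geq 1}\tilde Z^{(k)}_{n,i}$, which equals zero precisely when no individual of a type in $T_k^c$ is ever born in $\{\bm{\tilde Z}^{(k)}_n\}$.

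Next I would argue that $\{|\bm S_k|=0\}\cap C_k^c=\{\lim_{n\to\infty}|\bm Z^{(k)}_n|=0\}$. From the construction of $\{\bm Z^{(k)}_n\}$ recalled in Section \ref{ftbp}, the count ${Z}^{(k)}_{n,\Delta}$ is the cumulative number of sterile (type in $T_k^c$) individuals produced by $\{\bm{\tilde Z}^{(k)}_n\}$ up to generation $n$; since type-$\Delta$ individuals never die, $\{\lim_n|\bm Z^{(k)}_n|=0\}$ is precisely the event that no sterile individual is ever born and the surviving type-$\leq k$ part --- which then coincides pathwise with $\{\bm{\tilde Z}^{(k)}_n\}$ --- dies out (equivalently, by the characterisation in Section \ref{ftbp}, that $\{\bm Z_n\}$ becomes extinct before the first birth of a type in $T_k^c$). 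On $C_k^c$ the ``dies out'' requirement is automatically met, so intersecting with $C_k^c$ leaves exactly $\{\text{no sterile individual ever born}\}\cap C_k^c=\{|\bm S_k|=0\}\cap C_k^c$; and since $\{\lim_n|\bm Z^{(k)}_n|=0\}\subseteq C_k^c$ by the event inclusion established in the proof of Lemma \ref{lem1}, the intersection with $C_k^c$ is in fact redundant.

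Combining the two observations yields the disjoint decomposition $\{|\bm S_k|=0\}=C_k\sqcup\{\lim_{n\to\infty}|\bm Z^{(k)}_n|=0\}$, the disjointness following once more from $\{\lim_n|\bm Z^{(k)}_n|=0\}\subseteq C_k^c$. Conditioning on $\varphi_0=i$ and taking probabilities gives $\mbP(|\bm S_k|=0\mid\varphi_0=i)=\mbP_i(C_k)+\mbP_i(\lim_n|\bm Z^{(k)}_n|=0)=(1-\tilde q^{(k)}_i)+q^{(k)}_i$, which is the claim. The only step that genuinely requires care is the middle one --- identifying $\{|\bm S_k|=0\}\cap C_k^c$ with the extinction event of $\{\bm Z^{(k)}_n\}$ --- and it rests entirely on the bookkeeping that $\Delta$-individuals are immortal and appear exactly at the sterile births of $\{\bm{\tilde Z}^{(k)}_n\}$; everything else is immediate from the definitions and from Lemma \ref{lem1}.
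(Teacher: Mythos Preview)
Your proof is correct and follows essentially the same approach as the paper's: both establish the disjoint decomposition $\{|\bm S_k|=0\}=\{\lim_n|\bm{\tilde Z}^{(k)}_n|>0\}\cup\{\lim_n|\bm Z^{(k)}_n|=0\}$ and read off the probabilities. The paper's proof is a single terse line invoking the argument of Lemma~\ref{absorb}, whereas you spell out carefully why the second event equals $\{|\bm S_k|=0\}\cap C_k^c$ via the immortality of $\Delta$-individuals; the substance is identical.
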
  
\begin{proof}{By the same argument as in the proof of Lemma \ref{absorb},
\[
\mbP_i\left(|\bm S_k|=0\right)=\mbP_i\left(  \left\{\lim_{n\to\infty}|\bm{\tilde{Z}}_n^{(k)}|>0\right\} \cup \left\{
\lim_{n\to\infty} |\bm{{Z}}_n^{(k)}|=0  \right\}\right)\,,
\]
where the two events are mutually exclusive.}\end{proof}

This provides us with a condition for the global and partial extinction probabilities to coincide, 
\begin{corollary}\label{extcrit}
{For all $i\geq 1$, the following two statements are equivalent
\begin{itemize}
\item[{\it (i)}] $q_i=\tilde{q}_i$
\item[{\it (ii)}] $\mbP_i\left(\lim\limits_{k\to\infty}|  \bm{S}_k | = 0\right)=1.$
\end{itemize}
}
\end{corollary}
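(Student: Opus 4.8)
The plan is to read the result off directly from Lemmas~\ref{absorb} and~\ref{absorb2}, combined with the pointwise convergences $\bm{q}^{(k)}\to\bm{q}$ and $\bm{\tilde q}^{(k)}\to\bm{\tilde q}$ recalled in Section~\ref{ftbp}. First I would set $A_k:=\{\omega:|\bm S_k(\omega)|=0\}=\{\omega:\bm S_k(\omega)=\bm 0\}$, the second equality holding because $\bm S_k$ is $\mathbb{N}^\infty$-valued. By Lemma~\ref{absorb} the state $\bm 0$ is absorbing for $\{\bm S_k\}$, so $A_k\subseteq A_{k+1}$ for every $k\geq 1$; that is, $\{A_k\}$ is an increasing sequence of events.

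Since $|\bm S_k|$ takes values in $\mathbb{N}$, the event $\{\lim_{k\to\infty}|\bm S_k|=0\}$ coincides with $\{\,|\bm S_k|=0\ \text{for some }k\,\}=\bigcup_{k\geq 1}A_k$. Continuity of $\mbP$ from below then gives
\[
\mbP_i\!\left(\lim_{k\to\infty}|\bm S_k|=0\right)=\mbP_i\!\left(\bigcup_{k\geq 1}A_k\right)=\lim_{k\to\infty}\mbP_i(A_k).
\]
Applying Lemma~\ref{absorb2}, $\mbP_i(A_k)=1-\tilde q_i^{(k)}+q_i^{(k)}$, and letting $k\to\infty$ using $q_i^{(k)}\to q_i$ and $\tilde q_i^{(k)}\to\tilde q_i$ yields
\[
\mbP_i\!\left(\lim_{k\to\infty}|\bm S_k|=0\right)=1-\tilde q_i+q_i.
\]

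To conclude, recall that global extinction implies partial extinction, so $q_i\leq\tilde q_i$ and hence $1-\tilde q_i+q_i\leq 1$, with equality if and only if $q_i=\tilde q_i$. This gives the equivalence of~\textit{(i)} and~\textit{(ii)}. There is no genuine obstacle in the argument: the only point requiring attention is the monotonicity $A_k\subseteq A_{k+1}$, which is exactly the content of Lemma~\ref{absorb} and is what permits interchanging the limit in $k$ with the probability; the remainder is continuity of measure together with the already-established convergence of the truncated extinction probability vectors.
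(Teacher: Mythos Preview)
Your argument is correct and is precisely the one the paper has in mind: the corollary is stated without proof immediately after Lemma~\ref{absorb2}, the intended derivation being to combine the absorption property of Lemma~\ref{absorb} with the identity $\mbP_i(|\bm S_k|=0)=1-\tilde q_i^{(k)}+q_i^{(k)}$ and then let $k\to\infty$ using the convergences $\bm q^{(k)}\to\bm q$ and $\bm{\tilde q}^{(k)}\to\bm{\tilde q}$. Your write-up simply spells out these steps explicitly, including the justification that $\{\lim_k|\bm S_k|=0\}=\bigcup_k\{|\bm S_k|=0\}$ via integer values and monotonicity.
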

%
%
We rewrite equation \eqref{eq1} as
\begin{equation}\label{CouplingEq}
\bar{q}_{i}^{(k)}-q_{i}^{(k)} = \mbE_i  \left( \left( \vc\alpha^{(k)}\bar{\vc q}^{(k)}\right)^{|\bm{S}_k|}-\bm{1}\left\{|\bm{S}_k|=0\right\} \right).
\end{equation}
The next lemma formalises the discussion preceding Definition \ref{Seed def}.

\begin{lemma}\label{nconv}Assume that there exists $B<\infty$ such that 
\[
\liminf_{k \to \infty} \mbP_i( 0 < |\bm{S}_k| < B) =\alpha>0.
\]
If, in addition, $\{\vc \alpha^{(k)}\}$ satisfies Assumption \ref{tight} for some $N_1$ such that $q_j>0$ for all $j\in\{1,\ldots,N_1\},$
%
%
then $\liminf_{k\to\infty}\bar{q}^{(k)}_i > q_i$.
\end{lemma}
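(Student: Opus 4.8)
The plan is to reduce the claim to a lower bound on $\bar{q}_i^{(k)}-q_i^{(k)}$ and exploit the coupling identity \eqref{CouplingEq}. Since $q_i^{(k)}\to q_i$, it suffices to show $\liminf_{k\to\infty}\bigl(\bar{q}_i^{(k)}-q_i^{(k)}\bigr)>0$. In \eqref{CouplingEq} the term $\bm{1}\{|\bm{S}_k|=0\}$ exactly cancels the value of $(\vc\alpha^{(k)}\bar{\vc q}^{(k)})^{|\bm{S}_k|}$ on $\{|\bm{S}_k|=0\}$ (both equal $1$), and the integrand is nonnegative on $\{|\bm{S}_k|\geq1\}$, so
\[
\bar{q}_i^{(k)}-q_i^{(k)}=\mbE_i\!\left(\bigl(\vc\alpha^{(k)}\bar{\vc q}^{(k)}\bigr)^{|\bm{S}_k|}\bm{1}\{|\bm{S}_k|\geq1\}\right)\ \geq\ \mbE_i\!\left(\bigl(\vc\alpha^{(k)}\bar{\vc q}^{(k)}\bigr)^{|\bm{S}_k|}\bm{1}\{0<|\bm{S}_k|<B\}\right).
\]
On the event $\{0<|\bm{S}_k|<B\}$ the integer $|\bm{S}_k|$ is at most $\lceil B\rceil$, and $0\leq\vc\alpha^{(k)}\bar{\vc q}^{(k)}\leq1$, so the right-hand side is bounded below by $\bigl(\vc\alpha^{(k)}\bar{\vc q}^{(k)}\bigr)^{\lceil B\rceil}\,\mbP_i\bigl(0<|\bm{S}_k|<B\bigr)$.

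The heart of the proof is a lower bound on $\vc\alpha^{(k)}\bar{\vc q}^{(k)}=\sum_{j=1}^{k}\alpha_j^{(k)}\bar{q}_j^{(k)}$ that is uniform over all large $k$. I would combine three facts: Lemma \ref{lem1} gives $\bar{q}_j^{(k)}\geq q_j^{(k)}$; the sequence $\{q_j^{(k)}\}_k$ is nondecreasing with limit $q_j$, which is strictly positive for $j\in\{1,\dots,N_1\}$ by hypothesis; and Assumption \ref{tight} gives $\sum_{j=1}^{\min\{N_1,k\}}\alpha_j^{(k)}\geq a$ for $k\geq N_2$. As $\{1,\dots,N_1\}$ is finite, monotone convergence to strictly positive limits produces $k_0\geq\max\{N_1,N_2\}$ and $c>0$ with $q_j^{(k)}\geq c$ for all $j\leq N_1$ and $k\geq k_0$; hence, for $k\geq k_0$,
\[
\vc\alpha^{(k)}\bar{\vc q}^{(k)}\ \geq\ \sum_{j=1}^{N_1}\alpha_j^{(k)}\bar{q}_j^{(k)}\ \geq\ c\sum_{j=1}^{N_1}\alpha_j^{(k)}\ \geq\ ca\ >\ 0.
\]

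Combining the two estimates, $\bar{q}_i^{(k)}-q_i^{(k)}\geq(ca)^{\lceil B\rceil}\,\mbP_i(0<|\bm{S}_k|<B)$ for all $k\geq k_0$, so taking $\liminf_{k\to\infty}$ and invoking the hypothesis $\liminf_{k\to\infty}\mbP_i(0<|\bm{S}_k|<B)=\alpha>0$ gives $\liminf_{k\to\infty}\bigl(\bar{q}_i^{(k)}-q_i^{(k)}\bigr)\geq(ca)^{\lceil B\rceil}\alpha>0$; since $q_i^{(k)}\to q_i$, this is exactly $\liminf_{k\to\infty}\bar{q}_i^{(k)}\geq q_i+(ca)^{\lceil B\rceil}\alpha>q_i$. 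The only genuinely delicate point is the uniform lower bound on $\vc\alpha^{(k)}\bar{\vc q}^{(k)}$: this is where the hypothesis $q_j>0$ for $j\leq N_1$ is essential — otherwise the replacement mass could concentrate on types whose truncated extinction probabilities $q_j^{(k)}$ stay near $0$ — and it explains why Assumption \ref{tight} is stated with the same truncation level $N_1$. The rest is routine manipulation of \eqref{CouplingEq}.
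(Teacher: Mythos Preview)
Your proof is correct and follows essentially the same approach as the paper's own proof: both isolate the event $\{0<|\bm{S}_k|<B\}$ in the coupling identity \eqref{CouplingEq}, use Lemma~\ref{lem1} together with $q_j^{(k)}\uparrow q_j>0$ for $j\leq N_1$ to obtain a uniform lower bound on $\bar q_j^{(k)}$, and then invoke Assumption~\ref{tight} to bound $\vc\alpha^{(k)}\bar{\vc q}^{(k)}$ away from zero. The only cosmetic differences are your explicit rewriting of \eqref{CouplingEq} as an expectation over $\{|\bm{S}_k|\geq 1\}$ and your use of $\lceil B\rceil$ rather than $B$ in the exponent.
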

\begin{proof}
Since $q_j^{(k)} \to q_j>0$ for all $j\in\{1,\ldots,N_1\}$, there exists $\beta > 0$ and $K \in \mbN$ such that, for all $k>K$ and $j\in\{1,\ldots,N_1\}$, $q_j^{(k)} \geq \beta$. By Lemma \ref{lem1}, we also have $\bar{\vc q}^{(k)} \geq \vc q^{(k)}$ for all $k \in \mbN$. Hence
\[
\bar{q}_j^{(k)} \geq \beta  \text{ for all $k > K$ and all $j\in\{1,\ldots,N_1\}$}.
\]It follows from Assumption \ref{tight} that for any $k>\max\{K,N_1,N_2\}$,
$$\vc\alpha^{(k)}\bar{\vc q}^{(k)}\geq\sum_{j=1}^{N_1} \alpha^{(k)}_j \bar{q}_j^{(k)}\geq \beta a>0.$$
Then, by \eqref{CouplingEq},
\begin{eqnarray*}
\lefteqn{\liminf_{k \to \infty} \left( \bar{q}^{(k)}_i - q^{(k)}_i \right)}\\ &=&\liminf_{k \to \infty}  \mbE_i  \left( \left(\vc\alpha^{(k)}\bar{\vc q}^{(k)}\right)^{|\bm{S}_k|}-\bm{1}\left\{|\bm{S}_k|=0\right\} \right)\\
& \geq& \liminf_{k \to \infty} \mbE_i \left( \left(\vc\alpha^{(k)}\bar{\vc q}^{(k)}\right)^{| \bm{S}_k|} \,\Big|\, 0 < |\bm{S}_k|< {B} \right)\mbP_i \left( 0 < |\bm{S}_k| < B \right) \\
&\geq &  (\beta a)^{B} \alpha >0,
\end{eqnarray*}
which completes the proof.\end{proof}

Lemma \ref{nconv} suggests that the conditions we impose for $\bm{\bar{q}}^{(k)} \to \bm{q}$  should also be sufficient for
 $\{ |\bm{S}_k | \}$ to satisfy the dichotomy property, that is, with probability one, either $ | \bm{S}_k | \to \infty$ as $k \to \infty$, or a value $n$ exists for which $|\bm{S}_k|=0$ for all $k \geq n$. 
{We impose a condition similar to, but more general than, the well known sufficient condition \soph{`$\inf_i {q}_i >0$'} for $\{|\bm{Z}_n |\}$ to satisfy the dichotomy property} \soph{(see \cite{Jag92,Hac05}):} 
\begin{assumption} \label{extbd} 
{$\liminf_i q_i >0$.}
\end{assumption}
Observe that Assumption \ref{extbd} is satisfied when $\liminf_{i} p_i( \bm{0} )>0$. 
\begin{lemma}\label{SeedDich}
Suppose Assumption \ref{extbd} holds, then for all $i\in \mathcal{S}$
\begin{equation*}
\mbP_i \left( |\bm{S}_k | \to 0 \text{ or } \infty \right) =1.
\end{equation*}
\end{lemma}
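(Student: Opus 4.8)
The plan is to show that the seed process $\{\bm{S}_k\}$, viewed as a (varying-environment) branching process subject to total catastrophe, satisfies a dichotomy analogous to the classical one for branching processes with $\inf_i q_i>0$. The key point is that $\{\bm{S}_k\}$ either reaches the absorbing state $\bm{0}$ in finitely many steps, or $|\bm{S}_k|\to\infty$; intermediate behaviour (staying positive but bounded along a subsequence) must be ruled out. By Lemma~\ref{absorb}, $\bm{0}$ is absorbing, so on the event $\{\bm{S}_k\text{ does not reach }\bm 0\}$ it suffices to show $|\bm S_k|\to\infty$ almost surely.

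The main idea is a conditional Borel--Cantelli / second-Borel--Cantelli-type argument on the number of descendants. First I would fix $i$, condition on $\varphi_0=i$, and work on the event that $\{\vc Z_n\}$ produces arbitrarily large types (equivalently $\tau_k<\infty$ for all $k$; on the complement, the largest type produced is some finite $m$, and then $\bm S_k=\bm 0$ for all $k>m$, which is the finite-absorption branch). On this event, consider the filtration $\mathcal G_k=\sigma(\bm S_1,\dots,\bm S_k)$. For an individual (seed) of type $j>k$ in generation $k$ of $\{\bm S_k\}$, its offspring in the seed process are governed by the daughter process of a type-$j$ individual in $\{\bm{\tilde Z}^{(k+1)}_n\}$; the number of its type-$(>k+1)$ offspring-seeds is positive unless that daughter process goes extinct while still below level $k+1$, which — because all types $\le k+1$ retain their true progeny distributions in $\{\bm{\tilde Z}^{(k+1)}_n\}$ — happens with probability at most... well, one needs a lower bound on the probability of producing at least one further seed. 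Here Assumption~\ref{extbd} enters: since $\liminf_i q_i>0$, pick $c>0$ and $i_0$ with $q_i\ge c$ for all $i\ge i_0$. A type-$j$ individual with $j\ge i_0$ has probability $\ge c$ of having a line of descent that eventually dies — but what we want is the reverse, a lower bound on the probability that it does \emph{not} die within the truncated/sterile process, i.e. a uniform-in-$k$ lower bound $\ge 1-q'>0$ (for some $q'<1$) on producing a seed-offspring of type exceeding $k+1$. The cleanest route: the probability that seed $j$ (type $>k$) has \emph{no} offspring-seed equals the probability that the $\{\bm{\tilde Z}^{(k+1)}_n\}$-subtree rooted at it dies out, and that probability is bounded above by $\tilde q_j^{(k+1)}\le \tilde q_j$, but more usefully, bounded \emph{away from $1$} because producing a type-$(k+2)$ child (which is then a seed) has a probability that does not vanish uniformly — this is where one must think carefully.

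Concretely, the step I would carry out is: show that, conditionally on $\mathcal G_k$ and on $|\bm S_k|\ge 1$ and $\{\vc Z_n\}$ eventually reaching level $>k+1$, the probability that $|\bm S_{k+1}|\ge |\bm S_k|$ (or at least $\ge 1$, and with positive probability $\ge 2$) is bounded below by a constant $\delta>0$ independent of $k$; and moreover that the increments cannot drift down to a fixed positive level. A robust way is to couple $\{|\bm S_k|\}$ below by a Galton--Watson process with a fixed offspring law having mean $>1$ (or at least with $p_0<1$ and no a.s.\ fixation at a positive value), using Assumption~\ref{extbd} to get the uniform lower bound on "at least one surviving seed-line," and then invoke the classical dichotomy for that dominating process: it either dies or explodes, and on explosion so does $\{|\bm S_k|\}$. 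On the event where it "dies," re-examine: dying for the coupled lower bound doesn't force $\bm S_k=\bm 0$, so instead I would argue directly that for each $\varepsilon>0$ the event $\{0<|\bm S_k|<B\text{ infinitely often}\}$ has probability $0$, by showing that from any state with $0<|\bm S_k|<B$ there is a probability $\ge\eta(B)>0$, uniform in $k$, of hitting $\bm 0$ at the very next step (total catastrophe: all current seeds' $\{\bm{\tilde Z}^{(k+1)}_n\}$-subtrees die out — probability $\ge c^{B}$ crudely, once types are $\ge i_0$), so by a conditional Borel--Cantelli argument $\{0<|\bm S_k|<B\text{ i.o.}\}\subseteq\{\bm S_k=\bm 0\text{ eventually}\}$ up to a null set, which is the contradiction that kills the bounded-but-positive scenario. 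Combining: on a set of full measure, either $\bm S_k=\bm 0$ eventually, or $|\bm S_k|$ escapes every bound $B$, i.e.\ $|\bm S_k|\to\infty$.

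**The main obstacle** I anticipate is handling the varying-environment and total-catastrophe features simultaneously: the offspring law of a seed of type $j$ in generation $k$ changes with $k$ (it lives in $\{\bm{\tilde Z}^{(k+1)}_n\}$, not a fixed process), so one cannot directly quote a homogeneous branching-process dichotomy; and the total-catastrophe mechanism means "extinction of the seed process" is not the same as "each seed-line dies independently." The fix is to get the two uniform-in-$k$ bounds described above — a lower bound $\delta>0$ on "produce $\ge 1$ surviving seed" when current types are large, and a lower bound $\eta(B)>0$ on "total catastrophe next step from a bounded positive state" — both of which follow from Assumption~\ref{extbd} together with the fact that in $\{\bm{\tilde Z}^{(k+1)}_n\}$ types up to $k+1$ keep their original progeny distributions (so a seed of type $j\le k$... note seeds always have type $>k$, so type $j=k+1$ seeds from the previous structure reproduce "honestly" one step, giving the needed non-degeneracy), and then a standard Lévy--Borel--Cantelli argument on the filtration $\{\mathcal G_k\}$ delivers the dichotomy.
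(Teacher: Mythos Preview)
Your overall plan --- show that from any bounded positive state the seed process has a uniform-in-$k$ positive probability of absorption, then use a conditional Borel--Cantelli/Markov argument to rule out $\{0<|\bm S_k|<B\text{ i.o.}\}$ --- is exactly the route the paper takes (the paper packages the final step as an appeal to \cite[Theorem~2]{Jag92}). However, your key bound is misstated in a way that matters.

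You claim that from $\bm{S}_k=\bm{s}$ with $0<|\bm{s}|<B$ there is probability $\geq c^B$ of hitting $\bm{0}$ \emph{at the very next step}. This is false. A seed in generation $k$ has some type $j>k$; if $j>k+1$, then in $\{\bm{\tilde Z}^{(k+1)}_n\}$ that individual is still sterile and simply persists as a seed in $\bm{S}_{k+1}$ (see the remark following Figure~\ref{seedevo}). Hence if $\bm{S}_k$ contains any individual of type $\geq k+2$, one-step absorption can occur only via total catastrophe triggered by the type-$(k+1)$ seeds, and there is no uniform lower bound on that. Your parenthetical justification (``all current seeds' $\{\bm{\tilde Z}^{(k+1)}_n\}$-subtrees die out'') does not yield $|\bm{S}_{k+1}|=0$: a type-$(k+2)$ seed has a trivially extinct $\{\bm{\tilde Z}^{(k+1)}_n\}$-subtree yet still contributes $1$ to $|\bm{S}_{k+1}|$.

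The repair is to bound the probability of \emph{eventual} absorption, and here your $c^B$ intuition becomes correct. If each of the $\leq B$ seeds has its full $\{\bm Z_n\}$-subtree go extinct --- which, by independence and Assumption~\ref{extbd}, happens with probability $\geq c^{B}$ once all seed types exceed $i_0$, i.e.\ once $k\geq i_0$ --- then the original process dies, and so $\bm{S}_m=\bm 0$ for all large $m$. This gives $\mbP(\lim_m|\bm S_m|=0\mid \bm S_k=\bm s)\geq c^{|\bm s|}$ for $k\geq i_0$, which is all the Borel--Cantelli step needs. The paper reaches this bound more cleanly via Lemma~\ref{absorb2}, which gives $\mbP_j(\lim_k|\bm S_k|=0)\geq q_j$ directly, and then disposes of the finitely many small types $j\leq N_3$ by running the seed process forward to generation $N_3$. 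Your separate effort to lower-bound ``produce at least one seed-offspring'' and the supercritical Galton--Watson coupling are unnecessary; the absorption bound alone delivers the dichotomy.
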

\begin{proof}
\soph{By Assumption \ref{extbd} there exist $N_3 \in \mbN$ and $\beta >0$ such that $q_i>\beta$ for all $i > N_3$, and by Lemma \ref{absorb2}, $\mbP_i(\lim_{k \to \infty} |\bm{S}_k| =0) \geq q_i$ for all $i$. Thus for all $1 \leq i \leq N_3$,
\[
\mbP_i(\lim_{k \to \infty} |\bm{S}_k| =0) \geq \mbE_i(\bm{q}^{\bm{S}_{N_3}}) \geq \mbE_i\left(\beta^{|\bm{S}_{N_3}|}\right):=\delta_i>0, 
\]
and for all $i >N_3$, $\mbP_i(\lim_{k \to \infty} |\bm{S}_k| =0) \geq \beta$. Up to the possibility of a total catastrophe, the individuals in $\{\bm{S}_k\}$ behave independently, hence for any $| \bm{s_0} |\leq x $, we have
\begin{equation}\label{SJDic}
\mbP(\lim_{k \to \infty} |\bm{S}_k| =0 \, | \, \bm{S}_0 = \bm{s}_0) \geq \left( \min \left\{ \inf_{1\leq i \leq N_3} \delta_i, \beta \right\} \right)^x >0.
\end{equation}
Combining Lemma \ref{absorb} and Equation \eqref{SJDic} with the fact that $\{\bm{S}_k\}$ is a Markov chain, the result then follows from \cite[Theorem 2]{Jag92}.}
\end{proof}

\soph{
In specific cases, the extinction probability of the seed process can be easier to analyse than that of the original branching process. In \cite{BrauHau2017} we consider one such subclass of branching processes called \textit{lower Hessenberg} where, by building upon the results of the present section, we are able to analyse the set of fixed points of the original process and derive necessary and sufficient conditions for its almost sure global extinction. 
%
}



\subsection{Convergence to global extinction}

{In this section, we state {our result} on the pointwise convergence of the sequence $\{\bar{\vc q}^{(k)}\}$ to the global extinction probability $\vc q$. To obtain convergence, Equation \eqref{CouplingEq} suggests that, conditionally on $\mbP(| \bm{S}_k |\to\infty)>0$, one must show that $\vc\alpha^{(k)}\bar{\vc q}^{(k)}$ is bounded away from 1 for all sufficiently large $k$. To prove this, we use a regenerative argument, which may break down for some replacement distributions $\vc\alpha^{(k)}$}, such as the ones presented in the next section.

{For} a fixed $k$, each seed in $\bm{S}_k$ corresponds to a sterile individual produced over the lifetime of $\{ \bm{\tilde{Z}}^{(k)}_n \}$. To obtain $\{ \bm{\bar{Z}}^{(k)}_n \}$
, these seeds are replaced, independently of each other, by new individuals whose types follow the distribution $\vc\alpha^{(k)}$, and whose daughter processes themselves may be thought of as producing an i.i.d. number of new seeds, and so on. Thus, the process formed by taking all `replaced' individuals from $\{ \bm{ \bar{Z}}^{(k)}_n \}$ which correspond to seeds, and connecting each of these individuals to its nearest replaced seed ancestor in $\{ \bm{\bar{Z}}_n^{(k)} \}$, is a multitype Galton-Watson process on {$(\Omega,\mathcal{F},\mbP)$} with type space $T_k=\{1,\ldots,k\}$. We refer to this process as the \emph{embedded replacement
 process}, and denote it as {$\{ {{\vc Z}}_n^{(e,k)}\}_{n\geq 0}$}. In {$\{ {{\vc Z}}_n^{(e,k)}\}$}, each child's type is chosen independently of the type of its parent and other siblings, and therefore the corresponding progeny generating function $\bm{G}^{(e,k)}: [0,1]^{T_k} \to [0,1]^{T_k}$
  is such that 
\begin{equation}\label{ERgen}
G^{(e,k)}_i(\bm{s})=\sum_{x \geq 0} \left(\bm{\alpha}^{(k)} \bm{s} \right)^x \mbP_i(|\bm{S}_k|=x),\quad  i\in T_k.
\end{equation}
We use the convention that $\bm{Z}_0^{(e,k)} \equiv \bm{\bar{Z}}^{(k)}_0$, that is, we include the initial individual in $\bm{Z}_0^{(e,k)}$ regardless of whether it has been replaced. The embedded replacement process can be constructed pathwise for each $\omega\in\Omega$, but we omit the details here. Conditional on the initial type $\varphi_0 \in T_{k}$, for each $\omega \in \Omega$ we have $|\bm{Z}^{(e,k)}_1(\omega)|=|\bm{S}_k(\omega)|$. {Figure \ref{embseed} gives an illustration of the construction of $\{\bm{Z}^{(e,k)}_n\}$ when compared to the corresponding realisation of $\{ \bm{ \bar{Z}}^{(k)}_n \}$ when $k=4$ and $\vc\alpha^{(4)}=\vc e_1$. In Figure \ref{embseed} the type 2 root is common to both processes and the black type-1 nodes represent individuals that have been replaced. 

It is clear that if the embedded replacement process does not become extinct then neither does $\{ \bm{\bar{Z}}^{(k)}_n \}$. Thus, 
\begin{equation}\label{ERineq}
{\bar{q}}_j^{(k)} \leq  {q}_j^{(e,k)} \quad \text{ for all } j \in T_k.
\end{equation} 
where $\bm{q}^{(e,k)}$ {is} the extinction probability vector of $\{\bm{Z}^{(e,k)}_n \}$. We are now {in a position to prove {our} result} on the pointwise convergence of the sequence $\{\bar{\vc q}^{(k)}\}$ to the global extinction probability $\vc q$.

\begin{figure}
\vspace{-0.6cm}
\begin{tikzpicture}[->,>=stealth',level/.style={sibling distance = 1.9cm/#1,
  level distance = 0.8cm}] 
  \node [white] {...}
[white] child{ node [arn_n] {2}
    child{ node [arn_n] {4} 
         [black]   child{ node [arn_x] {1}
            	     child{ node [arn_n] {1}}  
            	     child{ node [white] {...} [white]
							 } 
	     	     child{ node [arn_n] {2}}       								
            }                          
    }
  child{ node [white] {...}}
    child{ node [arn_x] {1}
          [black]   child{ node [arn_n] {2} 
							child{ node [arn_x] {1}} }
            child{ node [white] {...} [white]
							 }
            child{ node [arn_n] {4}
							child{ node [arn_n] {2}}
							child{ node [white] {...} [white]
							 }
							child{ node [arn_n] {1}}
            }     
		}
	child{ node [white] {...}}
	child{ node [white] {...}}
   child{ node [arn_x] {1}
   [black] 	child{ node [arn_n] {2}
		child{ node [arn_x] {1}}	
	}
	child{ node [white] {...} [white]
							 }
	child{ node [arn_x] {1}
		child{ node [arn_x] {1}}	
	}
   }}
child{ node [white] {...}} 
child{ node [white] {...}} 
child{ node [white] {...}} 
child{ node [arn_n] {2} 
child{ node [arn_x] {1} [black]}
child{ node [white] {...}} 
child{ node [black] [arn_x] {1} 
	child{ node [arn_x] {1}  }
	[black]}
child{ node [white] {...}}  
child{ node [white] [arn_x] {1} 
	child{ node [arn_x] {1} [black]}
	child{ node [white] {...} [white]} 
	child{ node [arn_x] {1} 
		child{ node [arn_x] {1}}
	[black]}
	[black]} 
	[white]}
;  
\end{tikzpicture} 
\caption{\label{embseed}A realisation of $\{ \bm{\bar{Z}}^{(4)}_n \}$ (left) with the corresponding realisation of the $\{\bm{Z}^{(e,4)}_n\}$ (right) when $\vc\alpha^{(k)}=\vc e_1$. The black nodes represent individuals with type greater than 4 which are immediately replaced with type 1.}
\end{figure}


\begin{theorem}\label{general converge}Suppose Assumption \ref{extbd} holds. In addition, suppose that the replacement distributions $\{\vc\alpha^{(k)}\}$ satisfy Assumption \ref{tight} with $N_1$ such that either
\begin{itemize}
\item[(i)] $\tilde{q}_j<1$ for all $j\in\{1,\ldots, N_1\}$, or
\item[(ii)] $\tilde{q}_j=1$ for all $j\in\{1,\ldots, N_1\}$, and there is a path from any $j\in\{1,\ldots, N_1\}$ to the initial type $i$.
\end{itemize}
Then 
\[
\lim_{k\to\infty}\bar{q}^{(k)}_{i}= q_{i}.
\]
In particular, if $\{ \bm{Z}_n \}$ is irreducible, then under Assumptions \ref{tight} and \ref{extbd},
\[
\lim_{k\to\infty}\bm{\bar{q}}^{(k)}= \bm{q}.
\]
\end{theorem}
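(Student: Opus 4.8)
Write $c_k:=\vc\alpha^{(k)}\bar{\vc q}^{(k)}$. The plan is to use the coupling identity \eqref{eq1} together with the seed-process dichotomy (Lemma \ref{SeedDich}) to reduce the whole statement to the single estimate $\limsup_{k}c_k<1$, and then to prove that estimate --- elementarily under hypothesis (i), and by a regenerative argument through the embedded replacement process under hypothesis (ii).

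\textbf{Reduction.} If $q_i=1$ then, by Lemma \ref{lem1} and $\vc q^{(k)}\uparrow\vc q$, we have $q_i^{(k)}\le\bar q_i^{(k)}\le 1$ with $q_i^{(k)}\to 1$, so $\bar q_i^{(k)}\to q_i$; assume henceforth $q_i<1$. Since $\vc 0$ is absorbing for $\{\vc S_k\}$ (Lemma \ref{absorb}), the event $\{|\vc S_k|=0\}$ increases with $k$, so Lemma \ref{absorb2} and the monotone limits $\tilde{\vc q}^{(k)}\downarrow\tilde{\vc q}$, $\vc q^{(k)}\uparrow\vc q$ give $\mbP_i(|\vc S_k|\to 0)=\lim_k\mbP_i(|\vc S_k|=0)=1-\tilde q_i+q_i$, and hence, by the dichotomy, $\mbP_i(|\vc S_k|\to\infty)=\tilde q_i-q_i$; the same holds with $i$ replaced by any fixed type. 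Moreover the dichotomy forces $\mathbf 1\{0<|\vc S_k|<B\}\to 0$ $\mbP_i$-a.s.\ for each fixed $B$, whence $\mbP_i(0<|\vc S_k|<B)\to 0$ by dominated convergence. Now \eqref{eq1} reads $\bar q_i^{(k)}-q_i^{(k)}=\sum_{x\ge 1}c_k^{\,x}\,\mbP_i(|\vc S_k|=x)\ge 0$; splitting the sum at a threshold $B$ yields $\bar q_i^{(k)}-q_i^{(k)}\le\mbP_i(0<|\vc S_k|<B)+c_k^{\,B}$, so letting $k\to\infty$ and then optimising over $B$ gives $0\le\limsup_k(\bar q_i^{(k)}-q_i^{(k)})\le\inf_{B\ge 1}(\limsup_k c_k)^{B}$, which is $0$ as soon as $\limsup_k c_k<1$. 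Together with $q_i^{(k)}\to q_i$ this yields $\bar q_i^{(k)}\to q_i$, so it only remains to prove $\limsup_k c_k<1$.

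\textbf{The estimate $\limsup_k c_k<1$.} Under (i): by Lemma \ref{lem1}, $c_k\le\sum_{j\le N_1}\alpha_j^{(k)}\tilde q_j^{(k)}+\bigl(1-\sum_{j\le N_1}\alpha_j^{(k)}\bigr)=1-\sum_{j\le N_1}\alpha_j^{(k)}\bigl(1-\tilde q_j^{(k)}\bigr)$, and by Assumption \ref{tight} together with $\tilde q_j^{(k)}\downarrow\tilde q_j<1$ the subtracted sum is eventually at least $\tfrac a2\min_{j\le N_1}(1-\tilde q_j)>0$. Under (ii) this bound degenerates. First note $q_i<1$ and the path hypothesis imply $q_j<1$ for every $j\le N_1$ (start from $j$, reach $i$ with positive probability along the path, then survive with probability $1-q_i>0$). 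By \eqref{ERineq}, $c_k\le\vc\alpha^{(k)}\bm q^{(e,k)}=:v_k$, and from \eqref{ERgen}, using minimality of $\bm q^{(e,k)}$, one checks that $v_k$ is the minimal fixed point in $[0,1]$ of the probability generating function $g_k(t):=\mbE_{\vc\alpha^{(k)}}\bigl(t^{|\vc S_k|}\bigr)$, where $\mbP_{\vc\alpha^{(k)}}(\cdot):=\sum_j\alpha_j^{(k)}\mbP_j(\cdot)$ corresponds to starting the seed process from a random type with distribution $\vc\alpha^{(k)}$ and $\mbE_{\vc\alpha^{(k)}}$ is the associated expectation. I would then extract, using only the mass of $\vc\alpha^{(k)}$ on the \emph{finite} set $\{1,\dots,N_1\}$ and the facts of the Reduction applied at each fixed $j\le N_1$: (a) $g_k'(1)=\mbE_{\vc\alpha^{(k)}}(|\vc S_k|)\ge a\min_{j\le N_1}\mbE_j(|\vc S_k|)\to\infty$, because $\mbP_j(|\vc S_k|\to\infty)=\tilde q_j-q_j=1-q_j>0$ (recall $\tilde q_j=1$ under (ii)), so $\mbE_j(|\vc S_k|)\to\infty$ by Fatou; in particular $v_k<1$ for all large $k$; (b) $\mbP_{\vc\alpha^{(k)}}(|\vc S_k|\ge 1)\ge\sum_{j\le N_1}\alpha_j^{(k)}(\tilde q_j^{(k)}-q_j^{(k)})$, whose liminf is at least $\varepsilon_0:=a\min_{j\le N_1}(1-q_j)>0$ (Lemma \ref{absorb2}, Assumption \ref{tight}); (c) for each fixed $B$, $\sum_{j\le N_1}\alpha_j^{(k)}\mbP_j(1\le|\vc S_k|<B)\le\max_{j\le N_1}\mbP_j(0<|\vc S_k|<B)\to 0$, so $Q_k:=\mbP_{\vc\alpha^{(k)}}(|\vc S_k|\ge B)$ is eventually at least $\varepsilon_0/2$. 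Plugging these into $v_k=g_k(v_k)\le(1-Q_k)+v_k^{\,B}Q_k$, i.e.\ $1-v_k\ge Q_k(1-v_k^{\,B})\ge Q_k\,B\,v_k^{\,B-1}(1-v_k)$, and dividing by $1-v_k>0$ gives $v_k^{\,B-1}\le 1/(Q_kB)\le 2/(\varepsilon_0B)$; fixing any $B>2/\varepsilon_0$ then forces $\limsup_k v_k\le(2/(\varepsilon_0B))^{1/(B-1)}<1$, hence $\limsup_k c_k<1$.

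\textbf{The irreducible case, and the hard point.} When $\{\vc Z_n\}$ is irreducible, $\tilde{\vc q}$ is either $<\vc 1$ in every coordinate or equal to $\vc 1$, and there is a path between every pair of types; hence for every initial type $i$ exactly one of (i), (ii) holds for the $N_1$ supplied by Assumption \ref{tight}, so the preceding paragraphs give $\bar q_i^{(k)}\to q_i$ for all $i$, that is, $\bar{\vc q}^{(k)}\to\vc q$. The main obstacle is the regenerative estimate under (ii): $v_k$ is the minimal fixed point of a generating function $g_k$ that varies with $k$ and whose criticality parameter $g_k'(1)$ could, a priori, approach $1$ from above, so $v_k$ cannot be kept away from $1$ from the mean alone --- the crux is to combine $g_k'(1)\to\infty$ (Fatou together with the dichotomy) with the uniform lower bound $\mbP_{\vc\alpha^{(k)}}(|\vc S_k|\ge 1)\ge\varepsilon_0$ and with the fact that, by the dichotomy, the conditional law of $|\vc S_k|$ given $\{|\vc S_k|\ge 1\}$ escapes to $+\infty$.
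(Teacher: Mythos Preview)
Your proof is correct. The reduction step and the treatment of case (i) are essentially identical to the paper's; the genuine difference lies in case (ii).

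The paper, after showing (as you do) that $q_j<1$ for $j\le N_1$, controls $\vc\alpha^{(k)}\bar{\vc q}^{(k)}$ by a chain of stochastic minorizations: it bounds $\{\vc Z_n^{(e,k)}\}$ below by an auxiliary process $\{\vc Z_n^{(e,k,2)}\}$ in which each individual has $K$ offspring with probability $c-\varepsilon$ (here $c=\min_{j\le N_1}(1-q_j)$) and offspring types are thinned through $(\alpha_1^{(k)},\dots,\alpha_{N_1}^{(k)})$, and then by a $k$-free single-type process $\{Z_n^{(e,3)}\}$ with generating function $(as+1-a)^K(c-\varepsilon)+1-(c-\varepsilon)$; choosing $K$ large makes the latter supercritical uniformly in $k$, which pushes $\vc\alpha^{(k)}\vc q^{(e,k)}$ below $1-\gamma a$.

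You instead observe that $v_k=\vc\alpha^{(k)}\vc q^{(e,k)}$ is itself the minimal fixed point of the single p.g.f.\ $g_k(t)=\mbE_{\vc\alpha^{(k)}}(t^{|\vc S_k|})$, and then bound $v_k$ directly from the fixed-point identity via the elementary inequality $v_k=g_k(v_k)\le(1-Q_k)+Q_kv_k^{B}$, combined with Fatou for $g_k'(1)\to\infty$ (to ensure $v_k<1$) and the tail estimate $Q_k\ge\varepsilon_0/2$. This is a slicker, purely analytic substitute for the paper's probabilistic comparison chain: you avoid introducing the two auxiliary processes $\{\vc Z_n^{(e,k,2)}\}$ and $\{Z_n^{(e,3)}\}$ and the bookkeeping of their generating functions, at the cost of the small observation that $v_k$ solves a one-dimensional fixed-point problem. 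The paper's route, on the other hand, makes the supercriticality of the embedded replacement process more tangible and yields an explicit $k$-free minorant, which may be of independent interest.
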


\color{black}

If $\{\vc Z_n\}$ is irreducible, then \emph{\it(i)} or \emph{\it(ii)} immediately follows, but the converse is not true. Theorem \ref{general converge} therefore holds in many reducible cases too. 
The conditions on  $\tilde{q}_j$ are easy to verify since a simple criterion exists for partial extinction, see \cite{haut12}.

\begin{proof}
By \eqref{CouplingEq}, we have for any fixed $i\geq 1$, $k\geq 1$, and for any arbitrary integer $K\geq 1$,
\begin{eqnarray}\nonumber
\lefteqn{\bar{q}_i^{(k)}-q^{(k)}_i =}\\
& &\mbE_i \left( \left. \left( \vc\alpha^{(k)}\bar{\vc q}^{(k)}\right)^{|\bm{S}_k|} \right| \,0<|\bm{S}_k|<K \right) \mbP_i(0<|\bm{S}_k|<K) \label{seed1ab} \\
&&\quad +\mbE_i \left( \left. \left(\vc\alpha^{(k)}\bar{\vc q}^{(k)}\right)^{|\bm{S}_k|} \right| \,|\bm{S}_k| \geq K \right) \mbP_i(|\bm{S}_k| \geq K). \label{seed2ab}
\end{eqnarray}
{Under Assumption \ref{extbd}, by Lemma \ref{SeedDich} we get that for any $K\geq 1$}, $\mbP_i(0<|\bm{S}_k|<K) \to 0$ as $k \to \infty$, so \eqref{seed1ab} vanishes as $k\to\infty$. It {remains} to show that \eqref{seed2ab} vanishes {as well}. 

By Lemmas \ref{absorb}, \ref{absorb2} and \ref{SeedDich}, 
$\lim_{k \to \infty}\mbP_i(|\bm{S}_k| \geq K)=c_i$ independently of the choice of $K$, where $c_i=\tilde{q}_i-q_i$.
Thus we obtain 
\begin{align} \label{bla}
{ \limsup_{k \to \infty} \left( \bar{q}^{(k)}_i-q^{(k)}_i \right) \leq c_i\,\limsup_{k \to \infty} \left(  \vc\alpha^{(k)}\bar{\vc q}^{(k)} \right)^K\,.}
\end{align} We now prove that $\vc\alpha^{(k)}\bar{\vc q}^{(k)}$ is bounded away from 1 for $k$ sufficiently large whenever $c_i>0$, assuming \textit{(i)} and \textit{(ii)} separately.

 \textbf{Assume that \textit{(i)} holds.} Then there exists $\varepsilon>0$ and {$L \geq 1$} such that for all $j\in\{1,\ldots,N_1\}$, and for all $k \geq L$, $\tilde{q}_j^{(k)}<1-\varepsilon$. Therefore, by Lemma \ref{qint}, {for all $j\in\{1,\ldots,N_1\}$ and $k \geq L$, we have $\bar{q}^{(k)}_j < 1-\varepsilon$. It follows that for any $k>\max\{L,N_1\}$, 
\begin{eqnarray*} \vc\alpha^{(k)}\bar{\vc q}^{(k)} &=&\sum^{N_1}_{j=1} \alpha_j^{(k)} \bar{q}^{(k)}_j +\sum^k_{j=N_1+1} \alpha_j^{(k)} \bar{q}^{(k)}_j \\&<&(1-\varepsilon)\sum^{N_1}_{j=1} \alpha_j^{(k)} +\sum^k_{j=N_1+1} \alpha_j^{(k)} \\&=&\sum^k_{j=1} \alpha_j^{(k)}  -\varepsilon \sum^{N_1}_{j=1} \alpha_j^{(k)} \leq 1-\varepsilon\,a,
\end{eqnarray*}where the last inequality follows from Assumption \ref{tight}. With this, \eqref{bla} becomes
$$\limsup_{k \to \infty} \left( \bar{q}^{(k)}_i-q^{(k)}_i \right) < c_i\,(1-\varepsilon\,a)^K,$$ and the result follows from Corollary \ref{qint} and by choosing $K$ large enough.
\medskip

\textbf{Assume that \emph{(ii)} holds.} First observe that if $c_i=0$ for all $i\geq 1$ in \eqref{bla}, then the result immediately follows. In the remainder of the proof we assume that there exists $i\geq 1$ such that $c_i>0$, and we first show that this implies that $c_j>0$ for all $j\in\{1,\ldots,N_1\}$.
Indeed, let $\theta_i$ be the first time an individual of type $i$ is born in $\{\vc Z_n\}$. 

Then, by assumption, 
{$\mbP_j(\theta_i<\infty)>0$} for all $j\in\{1,\ldots,N_1\}$. 
Next, we have {
\begin{equation*} \mbP_j(|\bm{S}_k| \geq K) \geq \mbP_j(|\bm{S}_k| \geq K\,|\,\theta_i<\tau_k)\, \mbP_j(\theta_i<\tau_k).\end{equation*} }
In addition, if the process starts with one type-$j$ individual and generates a type-$i$ individual before a seed, then the total number of seeds would be larger than $K$ if the type-$i$ individual itself generates more than $K$ seeds, that is,
\begin{align*}
\mbP_i(|\bm{S}_k| \geq K)\leq \mbP_j(|\bm{S}_k| \geq K\,|\, \theta_i<\tau_k),
\end{align*}
so that 
\begin{align*}
\mbP_j(|\bm{S}_k| \geq K) \geq \mbP_i(|\bm{S}_k| \geq K)\, \mbP_j(\theta_i<\tau_k).
\end{align*}Note that here we use the assumption $\tilde{q}_j=1$ to avoid the possibility of {total} catastrophe in the seed process.
As $k\to\infty$, the last inequality becomes {$c_j\geq c_i \,\mbP_j(\theta_i<\infty)>0$}, as required.}

Now that we have shown $c_i>0$ for some $i\geq 1$ implies $c_j>0$ for all $j\in\{1,\ldots,N_1\}$, it remains to show that $c_i>0$ implies
$ \vc\alpha^{(k)}\bar{\vc q}^{(k)} $ is bounded away from 1 for $k$ sufficiently large.
{Since $c_j>0$, it follows that} for any $\varepsilon>0$ there exists an integer {$W_j$ depending on $K$ such that for all $k>W_j$, 
\begin{equation}\label{blabla}\mbP_j( | \bm{S}_k | \geq K)>c_j-\varepsilon.\end{equation}
Let {$W =\max_{1\leq j\leq N_1} \{W_j\},$} and $c=\min_{1 \leq j \leq N_1 } \{ c_j \}>0$. With reference to \eqref{ERgen} and \eqref{blabla}, we observe that for any $k \geq \{W, N_1, N_2\}$, the process $\{\bm{Z}_n^{(e,k)}: \varphi_0 \in \{1,\dots,N_1\} \}$ is then stochastically larger than the branching process $\{ \bm{Z}_{n}^{(e,k,2)} \}_{{n\geq 0}}$
with type set $T_{N_1}:=\{1,\dots,N_1\}$ and progeny generating function $\bm{G}^{(e,k,2)}:[0,1]^{T_{N_1}} \to [0,1]^{T_{N_1}}$ such that, for any $i\in T_{N_1}$,
$$
G^{(e,k,2)}_i(\bm{s}) = \left( \sum^{N_1}_{j=1} \alpha_j^{{(k)}} s_j + 1-\sum^{N_1}_{j=1} \alpha_j^{{(k)}} \right)^K (c-\varepsilon) +1-(c-\varepsilon).
$$
This corresponds to the branching process in which each individual has $K$ offspring with probability $c-\varepsilon$ and 0 offspring otherwise, then the types of the offspring are assigned independently according to the possibly defective distribution $(\alpha^{(k)}_1, \dots, \alpha^{(k)}_{N_1})$ and individuals not assigned a type are immediately killed. Since $G^{(e,k,2)}_i(\cdot)$ is independent of $i$, $\{ |\bm{Z}_{n}^{(e,k,2)} |\}_{{n\geq 0}}$ behaves like a single-type Galton-Watson process, that is, it is locally isomorphic to a single-type branching process (see \cite[Definition 4.2]{Zuc11}). Combining this with the fact that by Assumption \ref{tight}, $\sum_{i=1}^{N_1} \alpha_i^{(k)} \geq a$ for all $k \geq N_2$, we see that, for all $k \geq \max\{W,N_1,N_2\}$, $\{ |\bm{Z}_{n}^{(e,k,2)} |\}$ is stochastically larger than the single-type {branching} process $\{ Z_{n}^{(e,3)} \}_{{n\geq 0}}$ with progeny generating function
$$
G^{(e,3)}(s) = (as+1-a)^K(c-\varepsilon) +1-(c-\varepsilon).
$$
By taking $K>2/(a(c-\varepsilon))$ in order to bound the mean progeny {of $\{ Z_n^{(e,3)} \}$} away from 1, we obtain {$q^{(e,k)}_j\leq q^{(e,k,2)}_j\leq q^{(e,3)}<1-\gamma$} for any $k \geq \max\{ W, N_1, N_2\}$, $j \in \{ 1, \dots , N_1\}$, and for some $\gamma>0$. Using the same argument as the one used when assuming \emph{(i)} holds, we obtain $\vc\alpha^{(k)} {\vc q}^{(e,k)} <1- \gamma a$, and therefore by \eqref{ERineq}, $$  \vc\alpha^{(k)}\bar{\vc q}^{(k)} \leq  \vc\alpha^{(k)} {\vc q}^{(e,k)} <1-\gamma a$$} for $k$ sufficiently large, which proves the result.

\end{proof}

\section{Conditions for $\bm{q}<\bm{1}$ and $\bm{q}=\bm{1}$}

\soph{Theorem \ref{general converge} establishes a relationship between extinction of finite-type branching processes and global extinction of infinite-type branching processes}.
\soph{We now} directly exploit this link and \soph{well-known results} on finite-type branching processes in a first attempt to derive sufficient conditions for $\bm{q}=\bm{1}$ and $\bm{q}<\bm{1}$. 
 \soph{Throughout this section we assume that $\{\vc Z_n\}$ and $\{\vc\alpha^{(k)}\}$ satisfy the conditions of Theorem \ref{general converge}.}

For $k\geq 1,$ let $\tilde{M}^{(k)}$ denote the $k$th north-west truncation of the mean progeny matrix $M$, and let $ \vc x^{(k)}$ be the \soph{$k\times 1$} vector such that ${x}_i^{(k)}=\sum_{j>k} M_{ij}$. Then the matrix $\bar{M}^{(k)}:=\tilde{M}^{(k)}+{\vc  x}^{(k)}\vc\alpha^{(k)}$ is the mean progeny matrix of $\{ \bm{\bar{Z}}^{(k)}_n \}$, and $\bar{\vc q}^{(k)}=\vc 1$ if and only if $\rho(\bar{M}^{(k)})\leq 1$. This leads to a neat sufficient condition for almost sure global extinction.

\begin{corollary}\label{ext_cri}If $\rho(\bar{M}^{(k)})\leq 1$ for infinitely many $k$ then $\vc q=\bm{1}$. 
\end{corollary}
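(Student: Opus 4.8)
The plan is to combine Theorem \ref{general converge} with the standard finite-type extinction criterion together with Corollary \ref{qint}. First I would recall that for each fixed $k$, the truncated and augmented process $\{\bar{\vc Z}^{(k)}_n\}$ is a branching process on the finite type set $\mathcal{S}$ (effectively $T_k$, since the entries $\bar q_j^{(k)}$ for $j>k$ are determined by $\vc\alpha^{(k)}\bar{\vc q}^{(k)}$), whose mean progeny matrix is exactly $\bar M^{(k)}=\tilde M^{(k)}+\vc x^{(k)}\vc\alpha^{(k)}$. By the well-established finite-type criterion, $\bar{\vc q}^{(k)}=\vc 1$ if and only if $\rho(\bar M^{(k)})\le 1$. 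Hence the hypothesis that $\rho(\bar M^{(k)})\le 1$ for infinitely many $k$ translates into $\bar{\vc q}^{(k)}=\vc 1$ for infinitely many $k$, i.e.\ there is a subsequence $(k_m)_{m\ge 1}$ with $k_m\to\infty$ along which $\bar{\vc q}^{(k_m)}=\vc 1$ for every $m$.

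Next I would invoke Theorem \ref{general converge}: since we are assuming throughout this section that $\{\vc Z_n\}$ and $\{\vc\alpha^{(k)}\}$ satisfy its hypotheses, we have $\lim_{k\to\infty}\bar q^{(k)}_i=q_i$ for each $i$ (and $\lim_{k\to\infty}\bar{\vc q}^{(k)}=\vc q$ in the irreducible case). A convergent sequence has the same limit along any subsequence, so $q_i=\lim_{m\to\infty}\bar q^{(k_m)}_i=1$ for every $i$, giving $\vc q=\vc 1$. Alternatively, and without even needing the full strength of Theorem \ref{general converge}'s conclusion for each coordinate, one can argue directly from Corollary \ref{qint}: that corollary gives $\vc q\le \liminf_{k\to\infty}\bar{\vc q}^{(k)}$; restricting the $\liminf$ to the subsequence where $\bar{\vc q}^{(k_m)}=\vc 1$ shows $\liminf_{k\to\infty}\bar{\vc q}^{(k)}\le\vc 1$ trivially, so this route needs the equality $\liminf_{k}\bar{\vc q}^{(k)}=\vc q$, which is precisely what Theorem \ref{general converge} provides under the stated assumptions. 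Either way, the proof is a short deduction.

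I do not anticipate a genuine obstacle here: the statement is essentially a corollary, and the only subtlety is making sure the finite-type equivalence ``$\bar{\vc q}^{(k)}=\vc 1 \iff \rho(\bar M^{(k)})\le 1$'' is applicable, which requires that $\{\bar{\vc Z}^{(k)}_n\}$ be non-singular and irreducible (or at least that the classical criterion applies to it); this is where one must be slightly careful, but it follows from the construction — $\vc\alpha^{(k)}$ under Assumption \ref{tight} charges types in $\{1,\dots,N_1\}$, and combined with the reachability hypotheses in Theorem \ref{general converge} the relevant part of $\bar M^{(k)}$ behaves like an irreducible finite matrix, so the Perron--Frobenius criterion is valid on the communicating class containing the initial type. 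The cleanest write-up simply states: $\bar{\vc q}^{(k)}=\vc 1\iff\rho(\bar M^{(k)})\le 1$ for each $k$; under the hypothesis this holds for infinitely many $k$; by Theorem \ref{general converge}, $\bar q^{(k)}_i\to q_i$; passing to the limit along that subsequence yields $q_i=1$ for all $i$, i.e.\ $\vc q=\vc 1$.
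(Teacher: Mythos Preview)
Your proposal is correct and matches the paper's approach exactly. The paper does not give a separate proof for this corollary: it simply records in the preceding paragraph that $\bar{\vc q}^{(k)}=\vc 1$ if and only if $\rho(\bar M^{(k)})\le 1$, and the corollary is then immediate from Theorem~\ref{general converge} by passing to the subsequence on which $\bar{\vc q}^{(k)}=\vc 1$, precisely as you outline.
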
 

\soph{Corollary \ref{ext_cri} implies} that if $\liminf_k \rho(\bar{M}^{(k)}) < 1$ then $\bm{q}=\bm{1}$. Conversely, one may expect that \soph{$\liminf_k \rho(\bar{M}^{(k)}) > 1$ implies $\bm{q}<\bm{1},$}
however, this is not necessarily the case. Indeed, 
\cite[Example 4.4]{Zuc11} corresponds to a case where $\liminf_{k}\rho(\bar{M}^{(k)}) \geq 2$ and $\bm{q}=\bm{1}$. Additional higher moment conditions are therefore required. We impose the following condition.

\begin{assumption}\label{BoundedSM}
There exists $B_1 < \infty$ such that $\mbE_i (| \bm{Z}_1|^2)< B_1$ for all $i\geq 0$.
\end{assumption}

Let $\bm{\bar{v}}^{(k)}$ denote the right Perron-Frobenius eigenvector of $\bar{M}^{(k)}$ and $\bar{A}^{(k)}_{i,j\ell}:=\soph{\partial \bar{G}^{(k)}_i(\vc s)/(\partial s_j\partial s_\ell)|_{\bm{s}=1}}$, \soph{for $1\leq i,j,\ell\leq k$, where $\bar{\vc G}^{(k)}(\vc s)$ is the progeny generating function corresponding to $\{ \bm{\bar{Z}}^{(k)}_n \}$.} 
We now provide sufficient conditions for $\bm{q}<\bm{1}$. 

\begin{proposition}\label{SuSu}
\soph{Under Assumption \ref{BoundedSM}, if $\{\bm{\alpha}^{(k)}\}$ is such that}
\begin{itemize}
\item[(i)] there exists $B_2<\infty$ independent of $i,j,k$ such that $\bar{v}^{(k)}_j/\bar{v}^{(k)}_i < B_2$ whenever $\bar{M}^{(k)}_{ij}>0$, and
\item[(ii)] \soph{there exists $i\geq 1$} such that $\liminf_{k \to \infty} (\bar{v}^{(k)}_i/\sup_j\{\bar{v}^{(k)}_j \})=b>0$,
\end{itemize}
then $\liminf_k \rho(\bar{M}^{(k)}) > 1$ implies $q_i<1$.
\end{proposition}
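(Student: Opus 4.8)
The plan is to bound the survival probability $1-\bar q^{(k)}_i$ below, uniformly over large $k$, and then pass to the limit using $\bar q^{(k)}_i\to q_i$ (which holds because the standing hypotheses of this section are exactly those of Theorem \ref{general converge}). Since $\{\bm{\bar{Z}}^{(k)}_n\}$ is effectively a finite-type process on $T_k$, the truncated vector $(\bar q^{(k)}_1,\dots,\bar q^{(k)}_k)$ is the minimal nonnegative solution of $\vc s=\bar{\vc G}^{(k)}(\vc s)$, so by monotonicity of $\bar{\vc G}^{(k)}$ it suffices to exhibit, for each large $k$, a vector $\vc a^{(k)}\in[0,1]^{T_k}$ with $\bar{\vc G}^{(k)}(\vc 1-\vc a^{(k)})\le\vc 1-\vc a^{(k)}$; this forces $\bar q^{(k)}_j\le 1-a^{(k)}_j$ for all $j\le k$. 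I would take $\vc a^{(k)}=\varepsilon^{(k)}\bm{\bar{v}}^{(k)}$, with $\bm{\bar{v}}^{(k)}$ normalised so that $\max_{j\le k}\bar{v}^{(k)}_j=1$ and the scalar $\varepsilon^{(k)}>0$ chosen below.

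The core step is a second-order Taylor estimate at $\vc 1$. Writing $g(t)=\bar{G}^{(k)}_i(\vc 1-t\vc a)$ for $t\in[0,1]$ and using that all mixed partials of a progeny generating function are nonnegative and nondecreasing on $[0,1]^{T_k}$, the map $g''$ is nonincreasing in $t$, so $g(1)\le g(0)+g'(0)+\tfrac12 g''(0)$, which rearranges to
\[
1-\bar{G}^{(k)}_i(\vc 1-\vc a)\;\ge\;\sum_{j}\bar{M}^{(k)}_{ij}a_j-\tfrac12\sum_{j,\ell}\bar{A}^{(k)}_{i,j\ell}a_ja_\ell .
\]
With $\vc a=\varepsilon\bm{\bar{v}}^{(k)}$ the linear term equals exactly $\varepsilon\,\rho(\bar{M}^{(k)})\,\bar{v}^{(k)}_i$. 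For the quadratic term I would use that $\bar{A}^{(k)}_{i,j\ell}=0$ unless $\bar{M}^{(k)}_{ij}>0$ and $\bar{M}^{(k)}_{i\ell}>0$ (a type-$i$ parent a.s.\ has no type-$j$ child whenever $\bar{M}^{(k)}_{ij}=0$), so by hypothesis (i) every nonzero term obeys $\bar{v}^{(k)}_j\bar{v}^{(k)}_\ell< B_2^2(\bar{v}^{(k)}_i)^2$; coupled with $\sum_{j,\ell}\bar{A}^{(k)}_{i,j\ell}=\mbE_i\bigl(|\bm{\bar{Z}}^{(k)}_1|(|\bm{\bar{Z}}^{(k)}_1|-1)\bigr)\le\mbE_i(|\bm{Z}_1|^2)<B_1$ — the augmentation relabels offspring without changing their number, so Assumption \ref{BoundedSM} transfers to $\{\bm{\bar{Z}}^{(k)}_n\}$ — this bounds the quadratic term by $\tfrac12\varepsilon^2 B_1B_2^2(\bar{v}^{(k)}_i)^2$. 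Hence $1-\bar{G}^{(k)}_i(\vc 1-\varepsilon\bm{\bar{v}}^{(k)})\ge\varepsilon\bar{v}^{(k)}_i$ holds once $\rho(\bar{M}^{(k)})-\tfrac12\varepsilon B_1B_2^2\bar{v}^{(k)}_i\ge 1$, and since $\bar{v}^{(k)}_i\le 1$ it is enough to take $\varepsilon^{(k)}:=\min\{1,\,2(\rho(\bar{M}^{(k)})-1)/(B_1B_2^2)\}$, which also keeps $\vc 1-\varepsilon^{(k)}\bm{\bar{v}}^{(k)}$ in $[0,1]^{T_k}$. This would yield $\bar q^{(k)}_i\le 1-\varepsilon^{(k)}\bar{v}^{(k)}_i$ for every $i\le k$.

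To close the argument, $\liminf_k\rho(\bar{M}^{(k)})>1$ furnishes $\eta>0$ and $k_0$ with $\rho(\bar{M}^{(k)})\ge 1+\eta$ for $k\ge k_0$, whence $\varepsilon^{(k)}\ge\varepsilon^{\ast}:=\min\{1,\,2\eta/(B_1B_2^2)\}>0$. For the index $i$ of hypothesis (ii), since the bound above is scale-invariant once divided by $\sup_j\bar{v}^{(k)}_j$, one gets $1-\bar q^{(k)}_i\ge\varepsilon^{\ast}\,\bar{v}^{(k)}_i/\sup_j\bar{v}^{(k)}_j$ for $k\ge\max\{k_0,i\}$, so $\liminf_k(1-\bar q^{(k)}_i)\ge\varepsilon^{\ast}b>0$; since $\bar q^{(k)}_i\to q_i$ by Theorem \ref{general converge}, this gives $q_i\le 1-\varepsilon^{\ast}b<1$.

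I expect the genuine obstacle to lie in the quadratic term of the Taylor bound: a crude estimate would require the entries of $\bm{\bar{v}}^{(k)}$ to be bounded below uniformly in $k$, which is not available and typically fails as $k\to\infty$. Hypothesis (i), together with the fact that $\bar{A}^{(k)}_{i,j\ell}$ is supported on pairs of one-step successors of $i$, is exactly what makes the quadratic term $O\bigl((\bar{v}^{(k)}_i)^2\bigr)$ rather than $O\bigl((\sup_j\bar{v}^{(k)}_j)^2\bigr)$, so the dangerous factor cancels after dividing through by $\varepsilon\bar{v}^{(k)}_i$. The remaining points — the Taylor inequality itself and the persistence of the uniform second-moment bound under truncation-and-augmentation — are routine.
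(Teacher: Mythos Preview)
Your argument is correct and follows essentially the same route as the paper's proof: both use a second-order Taylor expansion of $\bar G^{(k)}_i$ at $\vc 1$ in the direction of the Perron--Frobenius eigenvector (normalised by its supremum), invoke hypothesis~(i) together with the support condition on $\bar A^{(k)}_{i,j\ell}$ to bound the quadratic term by $O\bigl((\bar v^{(k)}_i)^2\bigr)$, use Assumption~\ref{BoundedSM} for the second-moment bound, and then appeal to Theorem~\ref{general converge} and hypothesis~(ii) to conclude. Your exposition is in fact slightly more explicit than the paper's --- you justify the Taylor inequality directly and spell out why $\bar A^{(k)}_{i,j\ell}$ vanishes unless both $\bar M^{(k)}_{ij}$ and $\bar M^{(k)}_{i\ell}$ are positive --- whereas the paper cites \cite{Ath93} for the expansion and leaves the support argument implicit.
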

\begin{proof}
Observe that if there exists $\bm{s}^{(k)}$ such that $\bm{\bar{G}}^{(k)}(\bm{s}^{(k)}) \leq \bm{s}^{(k)}$ then $\bm{\bar{q}}^{(k)} \leq \bm{s}^{(k)}$. 
\soph{Let $c^{(k)}:=1/\sup_j \{ \bar{v}^{(k)}_j \}$.}
By the Taylor expansion formula in \cite[Corollary 3]{Ath93}, we have for any $1\leq i\leq k$ and $0<\theta<1$,
\begin{eqnarray*}
\bar{G}^{(k)}_i (\bm{1}-\theta \bm{\bar{v}}^{(k)} c^{(k)}) &\leq& 1 - \theta c^{(k)} \sum_j \bar{v}^{(k)}_j \bar{M}^{(k)}_{ij} + (\theta c^{(k)})^2 \sum_{j,\ell} \bar{v}_j^{(k)} \bar{v}_\ell^{(k)} \bar{A}^{(k)}_{i,j\ell}\\
&\leq & {1} - \theta \rho(\bar{M}^{(k)}) \bar{v}^{(k)}_i c^{(k)} + ( \theta B_2 \bar{v}_i^{(k)} c^{(k)} )^2 \sum_{j,\ell} \bar{A}^{(k)}_{i,j\ell},
\end{eqnarray*}
where $\sum_{j,\ell} \bar{A}^{(k)}_{i,j\ell}=\mbE_i (|\bm{\bar{Z}}_1^{(k)} |^2 ) =\mbE_i (|\bm{{Z}}_1|^2 ) \leq B_1.$
 Thus, for any $1<a<\liminf \rho(\bar{M}^{(k)})$ there exists $K<\infty$ such that
\[
\bm{\bar{G}}^{(k)} \left(\bm{1}-\theta \bm{\bar{v}}^{(k)}c^{(k)}\right) \leq \bm{1} - \theta \bm{\bar{v}}^{(k)}c^{(k)} (a - \theta B^2_2 B_1),
\]
for all $k>K$.
If $\theta < \frac{a-1}{B_1B^2_2}$ then $\bm{\bar{q}}^{(k)} <  \bm{1} - \theta \bm{\bar{v}}^{(k)}c^{(k)}$ for all $k \geq K$. By Theorem \ref{general converge} and \soph{(ii)}
we then obtain $q_i < 1 - \theta b$. 
\end{proof}

{Observe that 
if there exists $\varepsilon>0$ such that $\bar{M}^{(k)}_{ij}>0$ implies $\bar{M}^{(k)}_{ij}>\varepsilon$ for all $k\geq 0$, then 
$
\bar{v}^{(k)}_j/\bar{v}_i^{(k)} <\rho(\bar{M}^{(k)})/\varepsilon.
$
This means that if, in addition, $\limsup_k \rho(\bar{M}^{(k)})<\infty$ then $(i)$ holds.}

Proposition \ref{SuSu} leads naturally to sufficient conditions for the entries of $\bm{q}$ to be uniformly bounded away from 1.

\begin{corollary}\label{UnifBound}
\soph{Under Assumption \ref{BoundedSM},  if $\{\bm{\alpha}^{(k)}\}$ is such that} $0<b \leq \bar{v}_i^{(k)} \leq c<\infty$ for all $k \geq 0$ and $1 \leq i \leq k$, then $\liminf_k \rho(\bar{M}^{(k)}) >1$ implies $\sup_i q_i <1.$
\end{corollary}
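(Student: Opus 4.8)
The plan is to read Corollary \ref{UnifBound} as the uniform-in-$i$ version of Proposition \ref{SuSu}, and to obtain it by re-running the proof of that proposition while keeping track of how the constants depend on $i$. First I would check that the two-sided bound $0<b\le \bar v_i^{(k)}\le c<\infty$ forces both hypotheses of Proposition \ref{SuSu} to hold with constants independent of $i$: hypothesis \textit{(i)} holds with $B_2=c/b$, since $\bar v_j^{(k)}/\bar v_i^{(k)}\le c/b$ for \emph{all} $i,j,k$ (not merely those with $\bar M^{(k)}_{ij}>0$); and hypothesis \textit{(ii)} holds for \emph{every} index $i$ with the common constant $b/c$ in place of $b$, because $\bar v_i^{(k)}/\sup_j\{\bar v_j^{(k)}\}\ge b/c$. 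Thus Proposition \ref{SuSu} already gives $q_i<1$ for each fixed $i$; what remains is to make the gap uniform.

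To do this I would revisit the inequality established inside the proof of Proposition \ref{SuSu}. Fixing any $a$ with $1<a<\liminf_k\rho(\bar M^{(k)})$ and any $\theta<(a-1)/(B_1B_2^2)$, that proof produces a $K<\infty$ such that
\[
\bm{\bar G}^{(k)}\!\left(\bm1-\theta\,\bm{\bar v}^{(k)}c^{(k)}\right)\le \bm1-\theta\,\bm{\bar v}^{(k)}c^{(k)}\qquad\text{for all }k>K,
\]
where $c^{(k)}=1/\sup_j\{\bar v_j^{(k)}\}$, and hence $\bm{\bar q}^{(k)}\le \bm1-\theta\,\bm{\bar v}^{(k)}c^{(k)}$ for all $k>K$. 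The crucial observation is that $\theta$ and $K$ depend only on $a$, $B_1$ and $B_2$, hence not on $i$, and that the assumed bounds give $c^{(k)}\bar v_i^{(k)}\ge b/c$ for all $i\le k$ and all $k$. Reading off the $i$th component therefore yields
\[
\bar q_i^{(k)}\le 1-\theta\,\frac{b}{c}\qquad\text{for all }i\le k\ \text{and all }k>K,
\]
a bound that is now uniform over $i$.

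Finally I would let $k\to\infty$. Since throughout this section we assume that $\{\bm Z_n\}$ and $\{\bm\alpha^{(k)}\}$ satisfy the hypotheses of Theorem \ref{general converge}, we have $\bar q_i^{(k)}\to q_i$ for every $i$, so $q_i\le 1-\theta b/c$ for every $i$ and therefore $\sup_i q_i\le 1-\theta b/c<1$. I do not expect a substantial obstacle here beyond bookkeeping: the one point requiring care is verifying that $\theta$, $K$ and the lower bound $b/c$ on $c^{(k)}\bar v_i^{(k)}$ are genuinely independent of $i$ — which is exactly what the uniform eigenvector bounds supply — while the Taylor-expansion estimate (via \cite[Corollary 3]{Ath93}) and the moment bound $\sum_{j,\ell}\bar A^{(k)}_{i,j\ell}=\mbE_i(|\bm Z_1|^2)\le B_1$ are inherited verbatim from the proof of Proposition \ref{SuSu}.
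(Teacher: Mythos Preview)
Your proposal is correct and is exactly the approach the paper takes: the paper's proof of Corollary \ref{UnifBound} consists of the single sentence ``Following the arguments in the proof of Lemma \ref{SuSu}, there exists $\theta>0$ such that $q_i <1-\theta b/c$ for all $i\geq 1$,'' and you have faithfully reconstructed those arguments, arriving at the identical uniform bound $1-\theta b/c$.
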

\begin{proof} 
Following the arguments in the proof of Lemma \ref{SuSu}, there exists $\theta>0$ such that $q_i <1-\theta b/c$ for all $i\geq 1$.
\end{proof}

Theorem 4 of \cite{Spa89} is similar to Corollary \ref{UnifBound}, however it requires $\nu(M)>1$ which is known to be sufficient for $\bm{\tilde{q}}<\bm{1}$.
{Note that repeating the same arguments with the sequence $\{ \bm{q}^{(k)} \}$ instead of $\{ \bar{\bm{q}}^{(k)} \}$ leads to a result similar to \cite[Theorem 4]{Spa89} since the sequence of spectral radii of the mean progeny matrices corresponding to $\{ \bm{Z}^{(k)}_n \}$ converge to $\max \{ 1, \nu(M) \}$ as $k\to\infty$.} 
%
The primary difference {between Corollary \ref{UnifBound} and \cite[Theorem 4]{Spa89}} therefore {lies in} the conditions `$\liminf_{k} \rho( \bar{M}^{(k)} ) >1$' and `$\nu(M)>1$'. 
{In both Examples 2 and 3, when $\bm{q}<\bm{\tilde{q}}=\bm{1}$, the former is satisfied but the latter is not.}

\section{Examples and relaxations of Assumption \ref{tight}}
\label{other_replace}

Theorem \ref{general converge} proves that $\bm{\bar{q}}^{(k)} \to \bm{q}$ for a large class of replacement distributions $\{ \bm{\alpha}^{(k)} \}$. 
{In this section we demonstrate that when $\{ \bm{\alpha}^{(k)} \}$ is chosen so that Assumption \ref{tight} does not hold, the sequence $\{ \bm{\bar{q}}^{(k)} \}$ exhibits a range of asymptotic behaviours.} Indeed, we show that its limit does not necessarily exist (Example 2), or does not necessarily converge to $\vc q$ (Example 3). {The proofs of the results pertaining to these examples are gathered in \soph{Appendix B}.}
{These results are related to those in \cite{Gib87,Gib87b,Hey91}, where the algorithmic computation of the stationary distribution of a recurrent infinite state Markov chain was considered.}


\medskip
\noindent
\textbf{Example 2 (Replacement with type $k$).} Let $\bm{\alpha}^{(k)} = \bm{e}_k$ and consider a modified version of the example of \cite[Section 5.1]{haut12}, in which the odd types are `stronger' than the even types. That is, we assume $a,c>0$, $d>1$ and define
$$G_1(\vc s)= \dfrac{cd}{t}\, s_2^t+1- \dfrac{cd}{t},$$ and for $i\geq 2$,
\[
G_i(\vc s)= 
\begin{cases}
\dfrac{cd}{u}\, s_{i+1}^u+\dfrac{ad}{u}\, s_{i-1}^u+1-\dfrac{d(a+c)}{u} & \text{when $i$ is odd, }\\\dfrac{c}{dv}
\,s_{i+1}^v+\dfrac{a}{dv} \,s_{i-1}^v+1-\dfrac{(a+c)}{dv} & \text{when $i$ is even,}
\end{cases}
\]where $t=\lceil dc \rceil +1$, $u = \lceil d(c+a) \rceil +1$ and $v=\lceil (c+a)/d \rceil +1$.

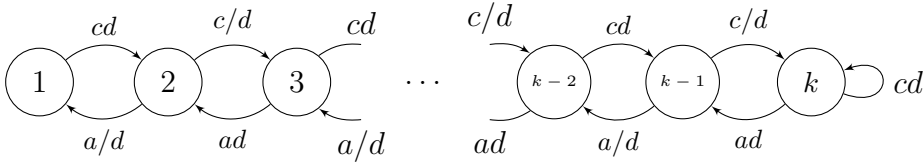
\begin{figure}
\begin{tikzpicture}

\tikzset{vertex/.style = {shape=circle,draw,minimum size=1.3em}}
\tikzset{edge/.style = {->,> = latex'}}
\node[vertex, minimum size=.9cm] (1) at  (0,0) {1};
\node[vertex, minimum size=.9cm] (2) at  (1.71,0) {2};
\node[vertex, minimum size=.9cm] (3) at  (3.42,0) {3};

\node at (5.13,0){$\dots$};

\node[vertex, minimum size=.9cm] (k-2) at  (6.84,0) {\tiny $k-2$};
\node[vertex, minimum size=.9cm] (k-1) at  (8.55,0) {\tiny $k-1$};
\node[vertex, minimum size=.9cm] (k) at  (10.26,0) {\small $k$};


\draw[edge,above] (1) to[bend left=40] node {\footnotesize $cd$ } (2);
\draw[edge,above] (2) to[bend left=40] node {\footnotesize $c/d$ } (3);

\draw[edge,below] (2) to[bend left=40] node {\footnotesize $a/d$ } (1);
\draw[edge,below] (3) to[bend left=40] node {\footnotesize $ad$ } (2);

\draw[edge,above] (k-2) to[bend left=40] node {\footnotesize $cd$ } (k-1);
\draw[edge,above] (k-1) to[bend left=40] node {\footnotesize $c/d$ } (k);

\draw[edge,below] (k) to[bend left=40] node {\footnotesize $ad$ } (k-1);
\draw[edge,below] (k-1) to[bend left=40] node {\footnotesize $a/d$ } (k-2);

\draw[edge,right] (k) to [out=-20,in=20,looseness=6] node {$cd$} (k);

\draw[above] (3) to[bend left=20, pos=1] node {$cd$} (4.275,0.5);
\draw[edge,below] (4.275,-0.5) to[bend left=20,pos=0] node {$a/d$}  (3);

\draw[edge,above] (5.985,0.5) to[bend left=20,pos=0] node {$c/d$}  (k-2);
\draw[below] (k-2) to[bend left=20,pos=1] node {$ad$}  (5.985,-0.5);

%

\end{tikzpicture}
\caption{\label{Example2}The mean progeny representation graph corresponding to $\{ \bm{\bar{Z}}^{(k)}_n \}$ when $k$ is odd in Example 2. }
\end{figure}
When $k$ is odd $\{ \bm{\bar{Z}}^{(k)}_n \}$ has the mean progeny representation graph given in Figure \ref{Example2};
 when $k$ is even, there is an equivalent graph.
  We consider the type-$k$ process {$\{ E_n^{(k)} ( \bm{\bar{Z}}^{(k)}) \}$} embedded with respect to $\{ \bm{\bar{Z}}^{(k)}_n : \varphi_0=k \}$, with mean progeny $m_{ E_n^{(k)} ( \bm{\bar{Z}}^{(k)}) }$ that we denote by $\bar{m}^{(k)}$ for short. The limit of the sequence $\{\bar{m}^{(k)}\}$ does not generally exist, however its limit superior and inferior are finite when $ac\leq 1/4$, as we show in the next lemma.

\begin{lemma}\label{mean_prog_emb}The mean progeny of $\{ E_n^{(k)} ( \bm{\bar{Z}}^{(k)}) \}$ described in Example 2 satisfies
\begin{equation}\label{m bar k} 
\lim_{k \to \infty}  \bar{m}^{(2k+1)}=cd+\frac{1}{2} \left(1 - \sqrt{1-4ac} \right) \; \mbox{and} \; \lim_{k \to \infty}  \bar{m}^{(2k)}=c/d+\frac{1}{2} \left(1 - \sqrt{1-4ac} \right)
\end{equation} when $ac\leq 1/4$, and
$\lim_{k \to \infty}  \bar{m}^{(k)}=+\infty$ when $ac>1/4$.
\end{lemma}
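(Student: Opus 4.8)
The plan is to reduce $\bar{m}^{(k)}$ to a scalar first-passage quantity that obeys a short recursion, and then to analyse that recursion in the two regimes $ac\le 1/4$ and $ac>1/4$.

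Recall from Section 2 that $\bar{m}^{(k)}=m_{E_n^{(k)}(\bm{\bar{Z}}^{(k)})}$ equals the weighted sum of all first-return paths to the vertex $k$ in the mean progeny representation graph of $\{\bm{\bar{Z}}^{(k)}_n\}$. By construction (see Figure \ref{Example2}), this graph is the birth--death graph on $\{1,\dots,k\}$ carrying the weights $M_{i,i-1}$ and $M_{i,i+1}$ of $\{\bm{Z}_n\}$ among these vertices, with the sole modification that the edge $k\to k+1$, of weight $M_{k,k+1}$, is replaced by a self-loop at $k$ of the same weight. A first-return path to $k$ either traverses this self-loop, contributing $M_{k,k+1}$, or it steps $k\to k-1$, then performs an arbitrary walk inside $\{1,\dots,k-1\}$ returning to $k-1$, and finally steps $k-1\to k$. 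Writing $\gamma_m$ for the weighted sum of first-return paths to $m$ inside $\{1,\dots,m\}$ under the original weights (no self-loop), and $\Gamma_m=\sum_{n\ge 0}\gamma_m^n=(1-\gamma_m)^{-1}$ for the weighted sum of all walks from $m$ to $m$ inside $\{1,\dots,m\}$ (the sum being $+\infty$ once $\gamma_m\ge 1$), the same excursion decomposition gives $\bar{m}^{(k)}=M_{k,k+1}+\gamma_k$ and $\gamma_m=M_{m,m-1}M_{m-1,m}\,\Gamma_{m-1}$ with $\gamma_1=0$. A one-line parity check shows $M_{m,m-1}M_{m-1,m}=ac$ for every $m\ge 2$, regardless of the parity of $m$, so the recursion collapses to
\[
\gamma_m=\frac{ac}{1-\gamma_{m-1}}\quad(m\ge 2),\qquad \gamma_1=0,
\]
and only the additive term $M_{k,k+1}$, which equals $cd$ when $k$ is odd and $c/d$ when $k$ is even, retains a dependence on $k$.

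Next I would analyse the scalar iteration $\gamma_m=\phi(\gamma_{m-1})$, $\phi(x)=ac/(1-x)$, $\gamma_1=0$. The map $\phi$ is increasing on $[0,1)$ and $\phi(x)-x=(x^2-x+ac)/(1-x)$, so everything is dictated by the quadratic $x^2-x+ac$. If $ac\le 1/4$, it has real roots $g_\pm=\tfrac12(1\pm\sqrt{1-4ac})$ with $0<g_-\le\tfrac12\le g_+\le 1$; since $g_-g_+=ac$ and $g_+\le 1$ we get $\phi(0)=ac\le g_-$, so $\phi$ maps $[0,g_-]$ into itself and $\phi(x)>x$ on $[0,g_-)$. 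Hence $\{\gamma_m\}$ starts at $0$, is nondecreasing, remains in $[0,g_-]$, and therefore converges to the unique fixed point of $\phi$ in that interval, namely $g_-=\tfrac12(1-\sqrt{1-4ac})$. If $ac>1/4$, the quadratic has no real root, so $\phi(x)>x$ on all of $[0,1)$ and $\{\gamma_m\}$ is strictly increasing; it can converge neither to a limit in $[0,1)$ (no fixed point exists) nor to $1$ from below (that would force $\gamma_m=\phi(\gamma_{m-1})\to+\infty$), so $\gamma_{k^\ast}\ge 1$ for some finite $k^\ast$. Then $\Gamma_{k^\ast}=+\infty$, and since walks inside $\{1,\dots,k-1\}$ may loop around level $k^\ast$ arbitrarily often, $\Gamma_{k-1}=+\infty$, hence $\gamma_k=+\infty$ and $\bar{m}^{(k)}=+\infty$, for every $k>k^\ast$.

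Putting the pieces together proves the lemma: when $ac\le 1/4$, letting $k\to\infty$ along odd indices (where $M_{k,k+1}=cd$) gives $\bar{m}^{(2k+1)}=cd+\gamma_{2k+1}\to cd+\tfrac12(1-\sqrt{1-4ac})$, and along even indices (where $M_{k,k+1}=c/d$) gives $\bar{m}^{(2k)}\to c/d+\tfrac12(1-\sqrt{1-4ac})$; and when $ac>1/4$, $\bar{m}^{(k)}\to+\infty$. I expect the one genuinely delicate point to be the first-return/excursion bookkeeping in the first step — in particular, keeping track of exactly when the geometric series defining $\Gamma_{m-1}$ is finite — since once that is in place the scalar analysis is routine; the borderline case $ac=1/4$, where $\phi$ has the parabolic fixed point $1/2$ and $\gamma_m\uparrow 1/2$ only monotonically (not geometrically), also deserves an explicit word.
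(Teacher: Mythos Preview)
Your proof is correct and takes a genuinely different route from the paper's. The paper computes $\bar{m}^{(k)}$ by direct path-counting: each non-loop first-return path to $k$ has even length $2(l+1)$, its edge-weight product is $(ac)^{l+1}$ independently of which edges are traversed (by the same parity observation you use), and the number of such paths of length $2(l+1)$ is at most the Catalan number $C_l$, with equality once $l\le k-2$. This sandwiches $\bar{m}^{(k)}-\bar{M}^{(k)}_{k,k}$ between $\sum_{l=0}^{k-2}C_l(ac)^{l+1}$ and $\sum_{l\ge 0}C_l(ac)^{l+1}$, and the Catalan generating function $\sum_l C_l x^l=(1-\sqrt{1-4x})/(2x)$ does the rest.

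Your approach replaces this combinatorics by an excursion decomposition, collapsing the problem to the M\"obius iteration $\gamma_m=ac/(1-\gamma_{m-1})$, whose fixed-point structure yields the same limits. What this buys you is monotonicity of $\gamma_m$ for free (hence of $\bar m^{(2k+1)}$ and $\bar m^{(2k)}$), which the paper has to argue separately in the proof of Proposition~\ref{Tridiagonal}; conversely, the paper's bounds are non-asymptotic and make the divergence in the supercritical case $ac>1/4$ immediate, whereas your argument for $\gamma_{k^\ast}\ge 1$ at a finite step relies on ruling out convergence to a fixed point and to $1^-$. Both approaches are standard for nearest-neighbour graphs; yours is essentially the continued-fraction viewpoint, the paper's the Dyck-path viewpoint.
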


As a consequence of Lemma \ref{mean_prog_emb}, $\lim_{k \to \infty} \bar{m}^{(2k+1)}-\bar{m}^{(2k)}=c(d-d^{-1})$, which indicates it is possible to choose $a$, $c$ and $d$ so that as $k \to \infty$, $\bar{m}^{(k)}$ oscillates between values less than 1 and greater than 1. This observation leads us to the following result.

\begin{proposition}\label{Tridiagonal}Consider the branching process described in Example 2. { Assume that $d>1$ and that $\bm{\alpha}^{(k)} = \bm{e}_k$. Then $\lim_{k \to \infty} \bm{\bar{q}}^{(k)} = \bm{q}$ when $ac>1/4$. Additionally, $\bm{\tilde{q}} = \bm{1}$ if and only if $ac \leq 1/4$, and when this is satisfied,}
\begin{itemize}
\item[{ {\it{(i)}}}] if $d^{-1}>\left( 1 + \sqrt{1-4ac} \right){/2c}$ then $\lim_{k \to \infty} \bm{\bar{q}}^{(k)} = \bm{q}$,
\item[{ {\it{(ii)}}}] if $d^{-1} \leq \left( 1 + \sqrt{1-4ac} \right){/2c} < d$  then
\[
\lim_{k \to \infty} \bm{\bar{q}}^{(2k+1)} = \bm{q} \quad \mbox{and} \quad \lim_{k \to \infty} \bm{\bar{q}}^{(2k)} = \bm{\tilde{q}}= \bm{1},
\]
\item[{ {\it{(iii)}}}] if $d \leq \left( 1 + \sqrt{1-4ac} \right){/2c}$ then $\lim_{k \to \infty} \bm{\bar{q}}^{(k)}=\bm{\tilde{q}}= \bm{q}=\bm{1}$.
\end{itemize}
\end{proposition}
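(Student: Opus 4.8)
The plan is to reduce everything to two ingredients: the mean progeny $\bar m^{(k)}$ of the single-type process $\{E_n^{(k)}(\bm{\bar{Z}}^{(k)})\}$ embedded at the top type of $\{\bm{\bar{Z}}^{(k)}_n\}$, whose parity-wise limits are supplied by Lemma \ref{mean_prog_emb}, and the coupling identity \eqref{eq1}. Since each $\{\bm{\bar{Z}}^{(k)}_n\}$ is finite, irreducible and non-singular, it becomes extinct a.s.\ if and only if type $k$ does, so $\bm{\bar{q}}^{(k)}=\bm{1}$ iff $\bar m^{(k)}\le1$, and more generally $\bar q_k^{(k)}$ (which equals $\bm{\alpha}^{(k)}\bm{\bar{q}}^{(k)}$ because $\bm{\alpha}^{(k)}=\bm{e}_k$) is the extinction probability of $\{E_n^{(k)}(\bm{\bar{Z}}^{(k)})\}$. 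Write $L_e:=c/d+\tfrac12(1-\sqrt{1-4ac})$ and $L_o:=cd+\tfrac12(1-\sqrt{1-4ac})$ for the two limits in \eqref{m bar k}, so $L_e\le L_o$ since $d\ge1$; then the hypotheses of \emph{(i)}, \emph{(ii)}, \emph{(iii)} translate into $L_e>1$, $L_e\le1<L_o$ and $L_o\le1$ respectively, while $ac>1/4$ is the case $L_e=L_o=+\infty$. I would then treat separately the parity classes that are \emph{subcritical} ($\lim_k\bar m^{(k)}\le1$) and those that are \emph{supercritical} ($\lim_k\bar m^{(k)}>1$).

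For the partial-extinction claim I would compute the convergence norm of $M$: in the mean progeny representation graph of $\{\bm Z_n\}$ every first-return path crosses each edge equally often up and down, and an up--down pair across any edge contributes the factor $(cd)(a/d)=(c/d)(ad)=ac$ (the $d$'s cancel because consecutive types carry reciprocal weights), so the symmetrised Jacobi matrix associated with $M$ has all off-diagonal entries equal to $\sqrt{ac}$, giving $\nu(M)=2\sqrt{ac}$; the criterion $\nu(M)\le1\Leftrightarrow\bm{\tilde{q}}=\bm{1}$ then yields $\bm{\tilde{q}}=\bm{1}$ iff $ac\le1/4$, with $\bm{\tilde{q}}<\bm{1}$ when $ac>1/4$. (Equivalently one checks that the mean $m_{E_n^{(1)}(\bm Z)}$ solves $x=ac/(1-x)$, equal to $\tfrac12(1-\sqrt{1-4ac})<1$ when $ac\le1/4$ and with no root in $[0,1)$ otherwise.) On a subcritical parity class $\bar m^{(k)}\le1$ for all large $k$ of that parity, hence $\bm{\bar{q}}^{(k)}=\bm{1}$ there: in case \emph{(iii)} this holds for both parities, so $\bm{\bar{q}}^{(k)}\to\bm{1}$, and with Corollary \ref{qint} ($\bm{q}\le\liminf_k\bm{\bar{q}}^{(k)}$) and $\bm{\tilde{q}}=\bm{1}$ this gives $\bm{q}=\bm{\tilde{q}}=\bm{1}$; in case \emph{(ii)} it holds for the even parity, so $\lim_k\bm{\bar{q}}^{(2k)}=\bm{1}=\bm{\tilde{q}}$. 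The subtle point is the boundary configurations $L_e=1$ (in \emph{(ii)}) or $L_o=1$ (in \emph{(iii)}), where $\lim_k\bar m^{(k)}=1$: there I would use the fact, available from the proof of Lemma \ref{mean_prog_emb}, that the finite-tail return mean is strictly increasing to its infinite-tail value, so $\bar m^{(k)}<1$ strictly for every $k$ of the relevant parity and $\bm{\bar{q}}^{(k)}=\bm{1}$ along the whole subsequence.

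It remains to prove $\bm{\bar{q}}^{(k)}\to\bm{q}$ along a supercritical parity class (covering $ac>1/4$, the odd class in \emph{(ii)}, and both classes in \emph{(i)}). By \eqref{eq1}, $\bar q_i^{(k)}-q_i^{(k)}=\mbE_i\bigl((\bar q_k^{(k)})^{|\bm S_k|}\bm{1}\{|\bm S_k|\ge1\}\bigr)$, and since $q_i^{(k)}\to q_i$ it suffices that this tends to $0$. Each of the three progeny laws of Example 2 puts uniformly positive mass on $\bm{0}$ (because $t,u,v$ are one more than the ceilings of the corresponding means), so $\liminf_i p_i(\bm{0})>0$, Assumption \ref{extbd} holds, and Lemma \ref{SeedDich} gives $\mbP_i(0<|\bm S_k|<K)\to0$ for every $K$. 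Splitting the expectation at $|\bm S_k|=K$, the part on $\{0<|\bm S_k|<K\}$ vanishes and the part on $\{|\bm S_k|\ge K\}$ is at most $(\bar q_k^{(k)})^K$, so it is enough to show $\limsup_k\bar q_k^{(k)}<1$ and let $K\to\infty$. When $ac<1/4$ the north-west truncations satisfy $\rho(\tilde M^{(k-1)})\uparrow 2\sqrt{ac}<1$, uniformly bounded away from $1$, so the offspring count of $\{E_n^{(k)}(\bm{\bar{Z}}^{(k)})\}$ (a functional of the subcritical sub-process on $\{1,\dots,k-1\}$) has second factorial moment bounded uniformly in $k$; combining this with $\bar m^{(k)}>1$ and the Taylor estimate of \cite[Corollary 3]{Ath93}, exactly as in the proof of Proposition \ref{SuSu}, gives $\bar q_k^{(k)}\le1-\delta$ for large $k$ with $\delta>0$ independent of $k$. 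When $ac>1/4$ the same truncations satisfy $\rho(\tilde M^{(k-1)})\uparrow 2\sqrt{ac}>1$, so for large $k$ the sub-process on $\{1,\dots,k-1\}$ survives with probability bounded below, on which event a type-$k$ individual has infinitely many type-$k$ descendants, whence $\bar q_k^{(k)}\le1-\pi<1$ uniformly.

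I expect the main obstacle to be the estimate $\limsup_k\bar q_k^{(k)}<1$ in the supercritical regime at the threshold $ac=1/4$, which does occur (for instance inside case \emph{(i)}): there the bound $\rho(\tilde M^{(k-1)})<1$ degenerates and the uniform second-moment argument fails, so one must instead show that the embedded generating functions $G^{(k)}$ of $\{E_n^{(k)}(\bm{\bar{Z}}^{(k)})\}$ converge pointwise on $[0,1]$ to the generating function of the infinite ``reflected'' process obtained by relabelling types from the top downwards, and that the corresponding minimal non-negative fixed points converge as well; this limiting generating function has mean $L>1$, hence its minimal fixed point is strictly below $1$, and it is precisely the alternating reciprocal-weight structure of Example 2 that makes this convergence tractable. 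A secondary technical point, already flagged above, is extracting from the proof of Lemma \ref{mean_prog_emb} the strict monotonicity of $\bar m^{(k)}$ along parity classes that is needed to close the boundary cases $L_e=1$ and $L_o=1$.
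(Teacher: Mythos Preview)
Your handling of case \emph{(iii)} contains a genuine error. You argue that $\bm{\bar{q}}^{(k)}\to\bm{1}$ along both parities and then invoke Corollary~\ref{qint} (``$\bm{q}\le\liminf_k\bm{\bar{q}}^{(k)}$'') together with $\bm{\tilde q}=\bm 1$ to conclude $\bm q=\bm 1$. But Corollary~\ref{qint} only yields $\bm q\le\bm 1$, which is vacuous; knowing that $\bm{\bar q}^{(k)}\to\bm 1$ and $\bm{\tilde q}=\bm 1$ gives no information whatsoever about $\bm q$ (the inequalities all point the wrong way). The paper proves $\bm q=\bm 1$ in case \emph{(iii)} by an entirely separate argument: it exhibits an explicit summable left-subinvariant vector for $M$, namely $x_i=(\sqrt d)^{(-1)^i}x^{1-i}$, and checks that $\bm xM\le\bm x$ reduces to $cx^2+a\le x$, which has a root $x>1$ precisely when $(1+\sqrt{1-4ac})/(2c)>1$, i.e.\ under the hypothesis of \emph{(iii)}; then \cite[Proposition~4.5]{haut12} gives $\bm q=\bm 1$ directly.

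Your treatment of the supercritical parity classes is also more fragile than necessary, and the obstacle you flag at $ac=1/4$ is real for your route but does not arise in the paper's. Rather than controlling second moments of the embedded process and appealing to a Taylor bound, the paper observes a structural monotonicity: because the progeny laws repeat with period two and type~$1$ is strictly weaker than the other odd types (it lacks the left step of weight $ad$), one has $\bar q^{(2k+1)}_{2k+1}\ge\bar q^{(2k+3)}_{2k+3}$ for all $k$, and similarly on the even subsequence. Combined with the monotone increase of $\bar m^{(k)}$ along each parity (already implicit in Lemma~\ref{mean_prog_emb}), this immediately gives: if $\lim_k\bar m^{(2k+1)}>1$ then $\bar q^{(2k+1)}_{2k+1}\le\bar q^{(2k_1+1)}_{2k_1+1}<1$ for all $k\ge k_1$, a uniform bound with no moment hypotheses and no case distinction on $ac$. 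Your dichotomy/seed-process step from there is the same as the paper's. In short, the missing idea on the supercritical side is the decreasing sequence $\bar q^{(k)}_k$ along parities, which makes the uniform bound $\limsup_k\bar q^{(k)}_k<1$ automatic and renders your second-moment and generating-function-convergence workarounds unnecessary.
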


In Figure \ref{Replacek1} we plot $\tilde{q}_1^{(k)}$ (black dashed), $q_1^{(k)}$ (grey dashed) and $\bar{q}^{(k)}_1$ for $\bm{\alpha}^{(k)}=\bm{e}_1$ (solid grey bold), $\bm{\alpha}^{(k)}=\bm{1}/k$ (solid black bold) and $\bm{\alpha}^{(k)}=\bm{e}_k$ (solid fine).
 In the top two plots we let $a=1/6$ and $c=7/8$, in which case {$ac  < 1/4$} and $\left( 1 + \sqrt{1-4ac} \right){/2c \approx 0.94}$. With this in mind, we choose $d^{-1}=0.95$ (panel (a)) and $d^{-1}=0.93$ (panel (b)). {In agreement with Proposition \ref{Tridiagonal}, for $\bm{\alpha}^{(k)}=\vc e_k$ we observe that $\bar{q}^{(k)}_1 \to q_1$} when $d^{-1}=0.95$, whereas {$\bar{q}^{(2k+1)}_1 \to q_1$ and $\bar{q}^{(2k)}_1 \to \tilde{q}_1=1$ when }$d^{-1}=0.93$. For these values of $a$, $c$ and $d$ it appears that $q_1<1$, which lead us to conclude that {$\liminf_{k \to \infty} \bar{q}_1^{(k)}\neq \limsup_{k \to \infty} \bar{q}_1^{(k)}$ }when $d^{-1}=0.93$ and $\bm{\alpha}^{(k)}=\bm{e}_k$. In panel (c) of Figure \ref{Replacek1} we let $a=1/3$ and $c=13/16$, in which case {$ac > 1/4$} so that $\tilde{q}_1<1$ for any value of $d$. We choose $d=2$ and observe that all sequences converge to $q_1=\tilde{q}_1$. In panel (d) of Figure \ref{Replacek1} we let $a=1/6$, $c=13/16$ and $d=2$, which means {$ac < 1/4$} and $d^{-1} < \left( 1 + \sqrt{1-4ac} \right){/2c \approx 1.03} < d$, {which entails }that for $\bm{\alpha}^{(k)}=\vc e_k$, $\bar{q}^{(2k+1)}_1 \to q_1$ and $\bar{q}^{(2k)}_1 \to \tilde{q}_1$. However, in this case $\tilde{q}_1=q_1=1$ and thus the limit of $\bar{q}^{(k)}$ exists.

\begin{figure}
\begin{tabular}{c}
(a)\\

\includegraphics[scale=.13]{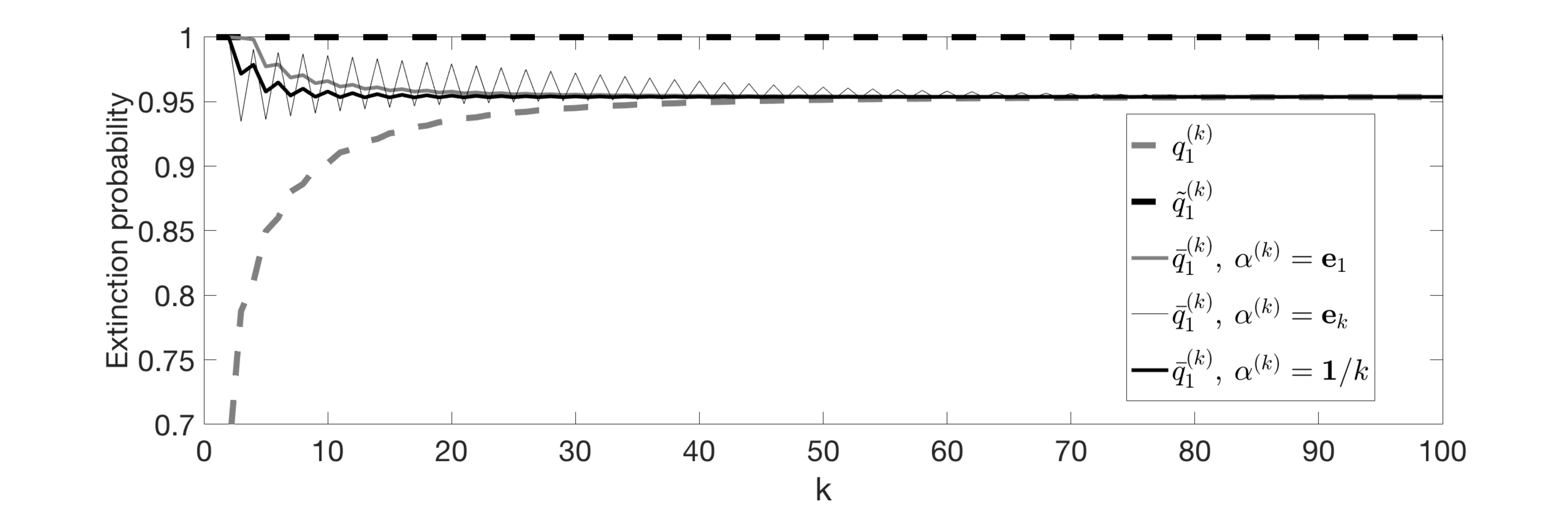}\\
(b)\\

\includegraphics[scale=.13]{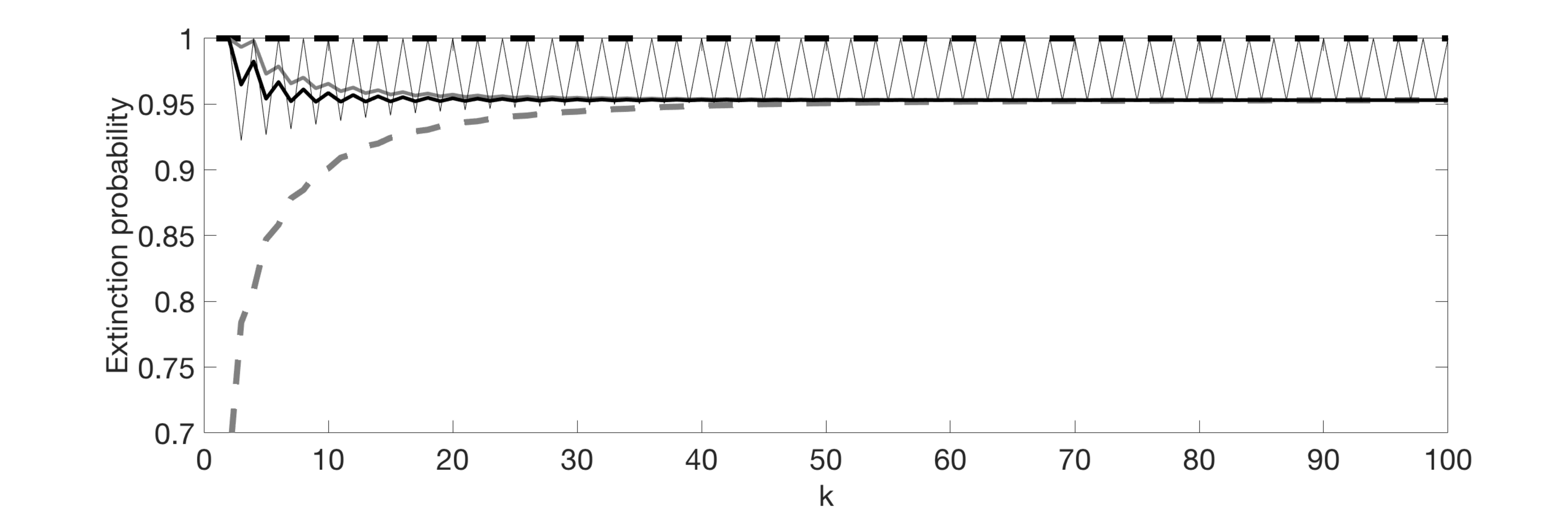}\\
(c)\\

\includegraphics[scale=.13]{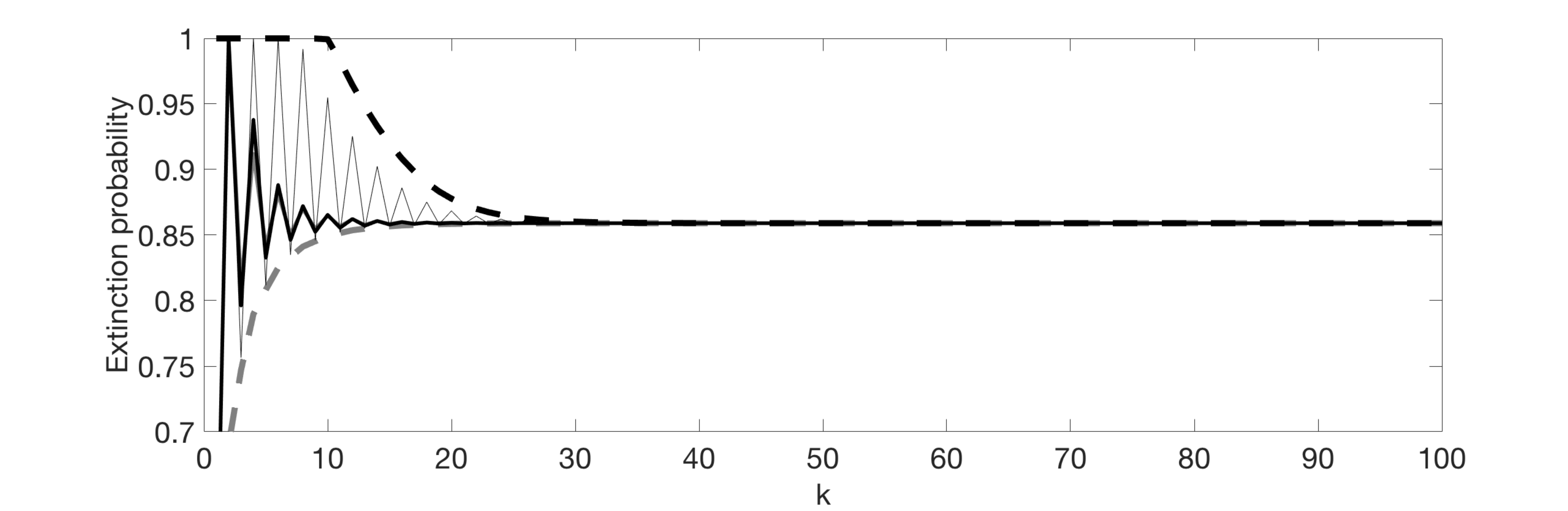}\\
(d)\\

\includegraphics[scale=.13]{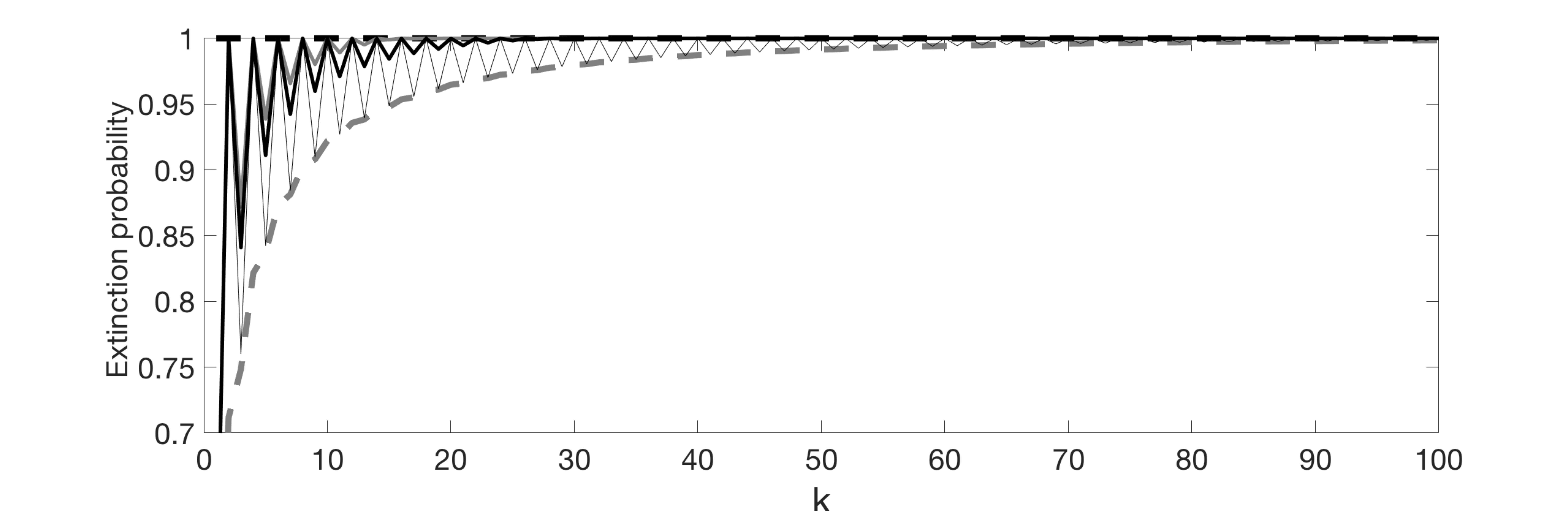}
\end{tabular}
\caption{\label{Replacek1}{Sequences of extinction probabilities $\tilde{q}_1^{(k)}$, $q_1^{(k)}$ and $\bar{q}^{(k)}_1$ for different replacement distributions and different parameters values, corresponding to Example 2. Details are given in the text.}}
\end{figure}

Observe that for the branching process described in {Example 2, Proposition \ref{Tridiagonal} implies} that when $\bm{\alpha}^{(k)}=\bm{e}_k$, then $\liminf_{k \to \infty} \bm{\bar{q}}^{(k)} = \bm{q}$. {The next example indicates that when $\bm{\alpha}^{(k)} = \bm{1}/k$ and $\inf_i q_i >0$, there is not always a subsequence of $\bm{\bar{q}}^{(k)}$ that converges to $\bm{q}$.}

\medskip
\noindent
\textbf{Example 3 (Replacement with a uniform type).} For ease of notation, {in this example} we use the type set $\mathcal{S}=\{2,3,4,\dots \}$. Suppose $p,\varepsilon \in (0,1)$ and $3p \varepsilon^2<1$, and consider the following progeny distribution:
$$G_2(\vc s)=p\,s_4^3+(1-p),$$ and for $i\geq 3$,
\[
G_{i}(\vc s) = 
\begin{cases}
\varepsilon p(1-3p \varepsilon^{i/2})\, s_{i-1}\,s_{2i}^3\\\quad+p(1-3p \varepsilon^{i/2})(1- \varepsilon)\, s_{2i}^3\\\quad\quad+
\varepsilon(1-p(1-3p \varepsilon^{i/2}) )\,s_{i-1}\\\quad\quad\quad+
(1-\varepsilon-p(1-3p \varepsilon^{i/2})(1- \varepsilon)), & \mbox{if } i \in \{ 2^l \}_{l\geq 2},\\\varepsilon\,s_{i-1}+(1-\varepsilon)&  \mbox{otherwise.}
\end{cases}
\]

The corresponding mean progeny representation graph is shown in Figure~\ref{Example3}. 

\begin{figure}
\centering
\begin{tikzpicture}

\tikzset{vertex/.style = {shape=circle,draw,minimum size=1.3em}}
\tikzset{edge/.style = {->,> = latex'}}
\node[vertex] (2) at  (0,0) {\small 2};
\node[vertex] (4) at  (1.7,0) {\small4};
\node[vertex] (8) at  (4.675,0) {\small8};
\node[vertex] (16) at  (10.2,0) {\scriptsize 16};

\node[vertex] (3) at  (0.85,-0.85) {3};

\node[vertex] (5) at  (2.1335,-1.054) {\small5};
\node[vertex] (6) at  (3.1875,-1.4875) {\small6};
\node[vertex] (7) at  (4.233,-1.054) {\small7};

\node[vertex] (9) at  (4.8875,-1.054) {\small9};
\node[vertex] (10) at  (5.4825,-1.955) {\scriptsize 10};
\node[vertex] (11) at  (6.3835,-2.55) {\scriptsize 11};
\node[vertex] (12) at  (7.4375,-2.7625) {\scriptsize 12};
\node[vertex] (13) at  (8.4915,-2.55) {\scriptsize 13};
\node[vertex] (14) at  (9.3925,-1.955) {\scriptsize 14};
\node[vertex] (15) at  (9.9875,-1.054) {\scriptsize 15};


\draw[edge,above] (2) to node {\footnotesize $3p$ } (4);
\draw[edge,above] (4) to node {\footnotesize $3p(1-3p\varepsilon^2$) } (8);
\draw[edge,above] (8) to node {\footnotesize $3p(1-3p\varepsilon^4$) } (16);

%
%
%
\draw[edge,below] (4) to[bend left] node[pos=.3,below] {\footnotesize $\varepsilon$ }(3);
\draw[edge,below] (3) to[bend left] node[pos=.7,below] {\footnotesize $\varepsilon$ }(2);

\draw[edge,below] (8) to[bend left=20] node[pos=.4,left] {\footnotesize $\varepsilon$ }(7);
\draw[edge,pos=.4,below] (7) to[bend left=20] node {\footnotesize $\varepsilon$ }(6);
\draw[edge,pos=.6,below] (6) to[bend left=20] node {\footnotesize $\varepsilon$ }(5);
\draw[edge, pos=.6 ,right] (5) to[bend left=20] node {\footnotesize $\varepsilon$ }(4);

\draw[edge,right] (16) to[bend left=12] node {\footnotesize $\varepsilon$ }(15);
\draw[edge,,pos=0.6,right] (15) to[bend left=12] node {\footnotesize $\varepsilon$ }(14);
\draw[edge,pos=0.2,below] (14) to[bend left=12] node {\footnotesize $\varepsilon$ }(13);
\draw[edge,pos=0.4,below] (13) to[bend left=12] node {\footnotesize $\varepsilon$ }(12);
\draw[edge,below] (12) to[bend left=12] node {\footnotesize $\varepsilon$ }(11);
\draw[edge,pos=0.6,below] (11) to[bend left=12] node {\footnotesize $\varepsilon$ }(10);
\draw[edge,pos=0.2,left] (10) to[bend left=12] node {\footnotesize $\varepsilon$ }(9);
\draw[edge,below] (9) to[bend left=12] node[pos=.5,right] {\footnotesize $\varepsilon$ }(8);

\node at (11.05,0) {$\dots$};

%
%
\end{tikzpicture}
\caption{\label{Example3}The mean progeny representation graph corresponding to Example 3.}
\end{figure}

\begin{proposition}\label{TheoremU} 
For the branching process described in Example 3,
\begin{itemize}
\item[(i)] $\bm{\tilde{q}}=\bm{1}$,
\item[(ii)] if $p>1/3$ then $\bm{q}<\bm{1}$, and
\item[(iii)] if $p<2/3$ and $\bm{\alpha}^{(k)}=\bm{1}/k${,} then $\lim_{k \to \infty} \bm{\bar{q}}^{(k)}=\bm{1}$.
\end{itemize}
\end{proposition}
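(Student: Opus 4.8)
The plan is to prove the three claims in turn; an explicit right‑harmonic vector for $M$ will handle (i), and, after restriction to finite truncations, it will also carry (iii), while (ii) is an elementary spine argument.

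\textbf{(i).} First I would record that $\{\vc Z_n\}$ is irreducible: every type descends to type $2$ along the ladders $i\mapsto i-1$, and type $2$ reaches every $j$ by climbing the spine $2\to 4\to 8\to\cdots$ past $j$ and then descending the appropriate ladder. By the partial‑extinction criterion recalled in the Preliminaries it then suffices to show $\nu(M)\le 1$. Writing $G_i(\vc s)=(\varepsilon s_{i-1}+1-\varepsilon)(\pi_i s_{2i}^3+1-\pi_i)$ with $\pi_i:=p(1-3p\varepsilon^{i/2})$ at each power of two $i\ge 4$, and $l(j):=\lfloor\log_2 j\rfloor$, I would set $v_j:=\varepsilon^{\,j-2^{l(j)}}(3p)^{\,1-l(j)}$ for $j\ge 2$ (so $v_{2^l}=(3p)^{1-l}$ and $v$ decays geometrically with ratio $\varepsilon$ down each ladder) and check by direct substitution that $M\vc v=\vc v$; the only non‑obvious identity, at $i=2^l$ with $l\ge 2$, is $\varepsilon^{2^{l-1}}(3p)^{2-l}+3\pi_{2^l}(3p)^{-l}=(3p)^{1-l}$, which is immediate from the definition of $\pi_{2^l}$. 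Restricting $\vc v$ to the first $k$ coordinates gives $\tilde M^{(k)}(\vc v|_{T_k})\le\vc v|_{T_k}$ with $\vc v|_{T_k}>\vc 0$, so $\rho(\tilde M^{(k)})\le 1$ for every $k$; since $\nu(M)=\lim_k\rho(\tilde M^{(k)})$, we obtain $\nu(M)\le 1$ and hence $\tilde{\vc q}=\vc 1$.

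\textbf{(ii).} Assume $p>1/3$. I would keep only the individuals whose type is a power of two, declaring the three type‑$2^{l+1}$ offspring of a type‑$2^l$ individual to be its children; this defines a subtree of $\{\vc Z_n\}$ which, as a branching process in a varying environment, has mean offspring number $3\pi_{2^l}\to 3p>1$ at a type‑$2^l$ individual. Fixing $N\ge 2$ and $\delta>0$ with $3\pi_{2^l}\ge 1+\delta$ for all $l\ge N$, the subprocess issued from one type‑$2^N$ individual stochastically dominates a supercritical Galton–Watson process ($3$ offspring with probability $(1+\delta)/3$, else $0$) and so survives with positive probability; as its $n$th generation lies inside the $n$th generation of $\{\vc Z_n\}$, survival forces $|\vc Z_n|\ge 1$ for all $n$. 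Hence $q_{2^N}<1$, and irreducibility (a positive chance of producing a type‑$2^N$ individual whose line then survives) yields $q_i<1$ for every $i$, i.e. $\vc q<\vc 1$.

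\textbf{(iii).} Take $\vc\alpha^{(k)}=\vc 1/k$ and let $m_k=2^{L_k}$ be the largest power of two in $T_k$, so $\tfrac12\max T_k<m_k\le\max T_k$. The truncation of $M$ leaves a single leaky row: $\vc x^{(k)}$ has its only non‑zero entry at $m_k$, equal to $M_{m_k,2m_k}=3\pi_{m_k}$, so $\bar M^{(k)}=\tilde M^{(k)}+3\pi_{m_k}\,\vc e_{m_k}\vc\alpha^{(k)}$. Applying $\bar M^{(k)}$ to the same vector $\vc v|_{T_k}$ and using $M\vc v=\vc v$, one gets $(\bar M^{(k)}\vc v|_{T_k})_i=v_i$ for every $i\ne m_k$ and $(\bar M^{(k)}\vc v|_{T_k})_{m_k}=v_{m_k}-3\pi_{m_k}(3p)^{-L_k}+\frac{3\pi_{m_k}}{k}\sum_{j\in T_k}v_j$; thus $\bar M^{(k)}\vc v|_{T_k}\le\vc v|_{T_k}$ exactly when $\sum_{j\in T_k}v_j\le(3p)^{-L_k}k$. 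Summing $v_j$ level by level, $\sum_{j\in T_k}v_j\le\frac1{1-\varepsilon}\sum_{l=1}^{L_k}(3p)^{1-l}$, so it is enough that $3p\sum_{s=0}^{L_k-1}(3p)^s\le(1-\varepsilon)k$. Since $L_k\le\log_2\max T_k$ we have $(3p)^{L_k}\le(\max T_k)^{\log_2(3p)}$, and $p<2/3$ makes $\log_2(3p)<1$, so the left‑hand side is $O((\max T_k)^{\log_2(3p)})=o(k)$ when $3p>1$, $O(\log k)$ when $3p=1$, and $O(1)$ when $3p<1$; in every case the inequality holds for all large $k$. For such $k$, $\vc v|_{T_k}>\vc 0$ together with $\bar M^{(k)}\vc v|_{T_k}\le\vc v|_{T_k}$ forces $\rho(\bar M^{(k)})\le 1$, whence $\bar{\vc q}^{(k)}=\vc 1$; therefore $\lim_{k\to\infty}\bar{\vc q}^{(k)}=\vc 1$.

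The one genuinely delicate point is in (iii): isolating, for each $k$, the single leaky type $m_k$ and showing that the rank‑one augmentation it produces remains too weak to lift $\rho(\bar M^{(k)})$ above $1$. This is exactly where the hypothesis $p<2/3$ enters, via $(3p)^{L_k}\le(\max T_k)^{\log_2(3p)}=o(k)$; for $p\ge 2/3$ this estimate degenerates and the argument genuinely breaks down. Everything else reduces to the explicit vector $\vc v$ of part (i), the standard identity $\nu(M)=\lim_k\rho(\tilde M^{(k)})$ for irreducible $M$, and the equivalences ``$\bar{\vc q}^{(k)}=\vc 1\iff\rho(\bar M^{(k)})\le 1$'' and ``$\nu(M)\le 1\iff\tilde{\vc q}=\vc 1$'' already recorded in the paper.
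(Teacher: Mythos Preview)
Your proof is correct, and for parts (i) and (iii) it takes a genuinely different route from the paper's. The paper establishes (i) by computing the mean $\tilde m^{(2^k)}$ of the embedded type-$2^k$ process through a recursive enumeration of first-return paths in the mean progeny graph, obtaining $\tilde m^{(2^k)}=3p\,\varepsilon^{2^{k-1}}<1$; for (iii) it repeats this combinatorial calculation after adding the uniform augmentation edges, shows $\bar m^{(2^k)}\to 0$ when $p<2/3$, and only remarks that the extension from the subsequence $\{2^k\}$ to general $k$ ``is an extension of the same method and is omitted''. Your approach instead exhibits an explicit positive right $1$-eigenvector $\vc v$ of $M$ and uses it throughout: truncation gives $\tilde M^{(k)}(\vc v|_{T_k})\le \vc v|_{T_k}$, hence $\nu(M)\le 1$ and (i) follows; and because the truncation leaks through the single row $m_k$, the rank-one augmentation still satisfies $\bar M^{(k)}(\vc v|_{T_k})\le \vc v|_{T_k}$ exactly when $\sum_{j\in T_k}v_j\le (3p)^{-L_k}k$, which you check holds for all large $k$ precisely when $p<2/3$. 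This buys you a cleaner and more uniform argument --- no path-counting, no recursion, and every $k$ handled at once rather than only along a subsequence --- while the paper's embedded-process computation, though more concrete, makes the emergence of the threshold $p=2/3$ less transparent. Part (ii) is essentially the same spine-domination argument in both proofs.
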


When $p>2/3$, one can show that $\bar{m}^{(k)} \to \infty$; while this implies $\bm{\bar{q}}^{(k)} < \bm{1}$ for all $k$ large enough, it alone does not rule out the case where $\bm{\bar{q}}^{(k)} \to \bm{1}$. We now give a numerical example to further explore the cases $1/3<p<2/3$ and $p>2/3$. 

In Figure \ref{ReplaceU} we plot the first entry of each sequence of extinction probability vectors for the branching processes described in Example 3 {with $\varepsilon=1/2$} and two different values of $p$. {In the upper panel} of Figure \ref{ReplaceU} we let $p=1/2$. In this case, $1/3<p<2/3$ and in agreement with Proposition \ref{TheoremU} {we have $\bar{q}^{(k)}_1 \to \tilde{q}_1 \neq q_1$ for $\bm{\alpha}^{(k)}= \bm{1}/k$.}
{In the lower panel we take} $p=7/9$. In this case, $p>2/3$, and Proposition \ref{TheoremU} does {not} provide any information about the convergence of $\bar{q}^{(k)}_1$, however, {simulations indicate that} $\bar{q}^{(k)}_1 \to q_1 <1$.

\begin{figure}
\centering
\includegraphics[angle=0,width=12.4cm]{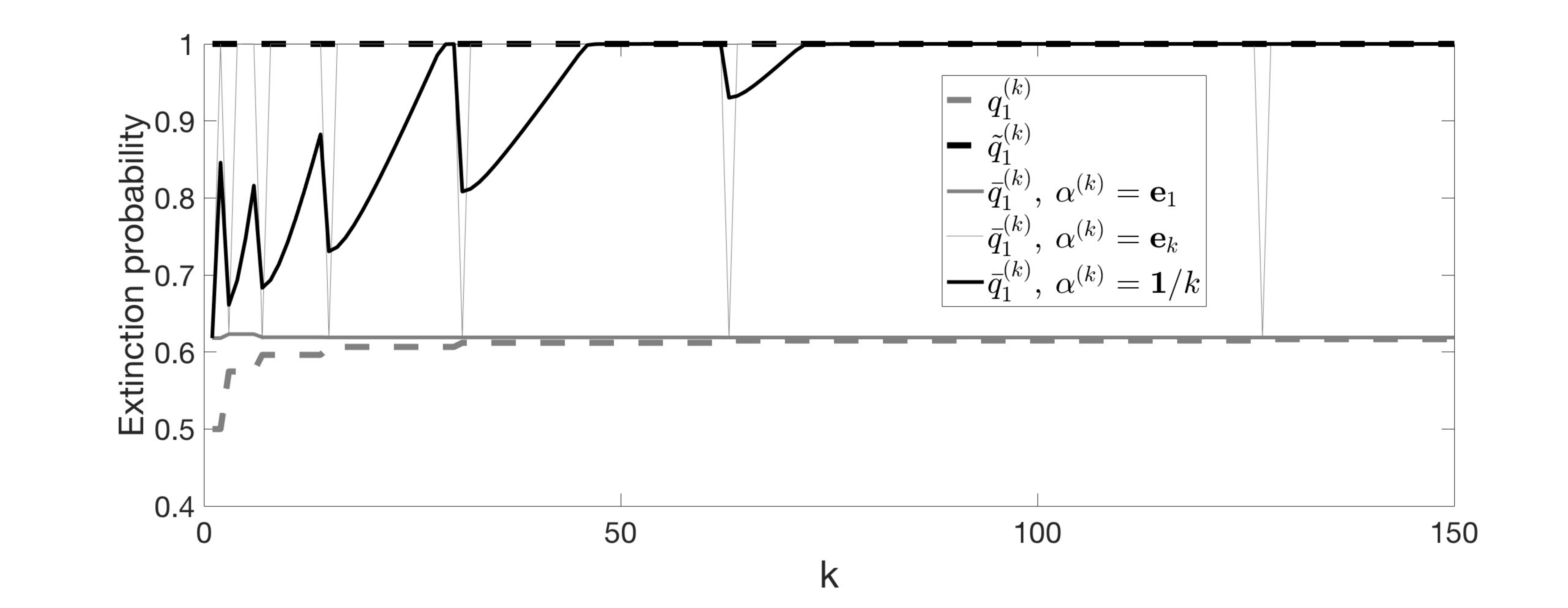}
\includegraphics[angle=0,width=12.4cm]{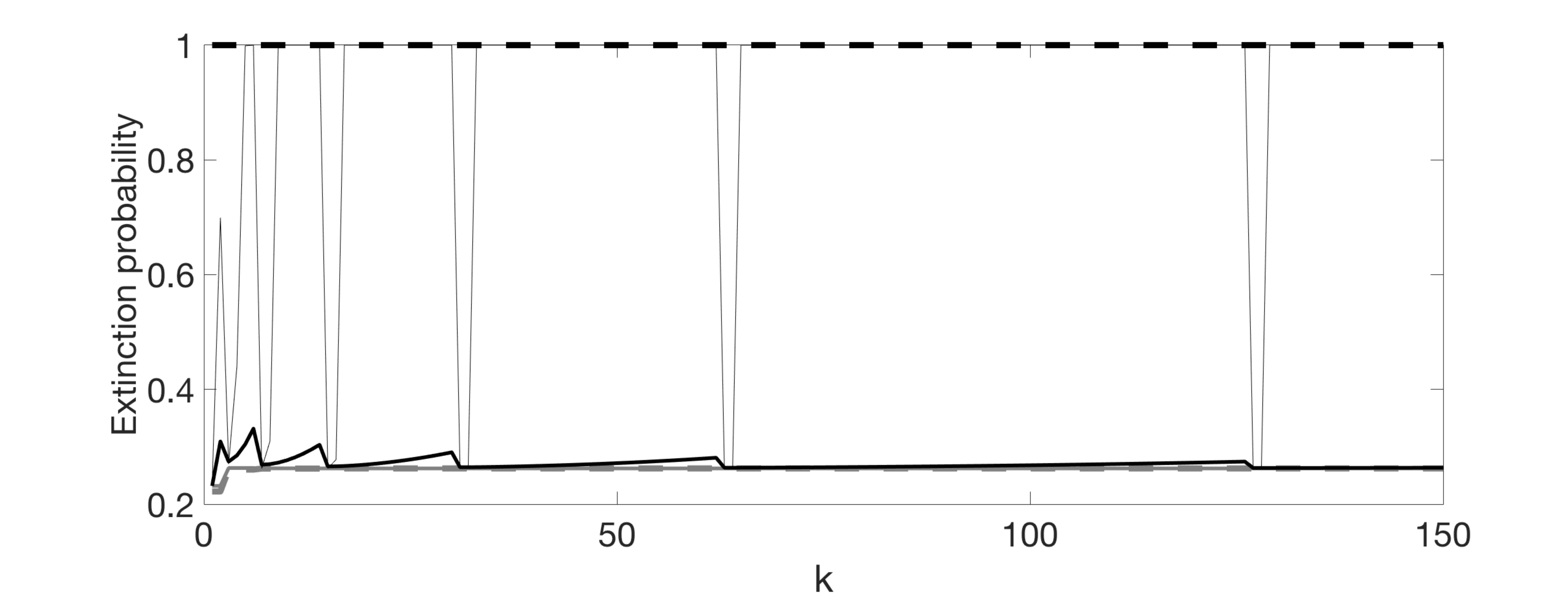}
\caption{\label{ReplaceU} Sequences of extinction probabilities $\tilde{q}_1^{(k)}$, $q_1^{(k)}$ and $\bar{q}^{(k)}_1$ for different replacement distributions and different parameters values, corresponding to Example 3. Details are given in the text.}
\end{figure}




\appendix
\section{Computational aspects}
\soph{The three sequences of extinction probabilities $\{\vc q^{(k)}\}$, $\{\tilde{\vc q}^{(k)}\}$, and $\{\bar{\vc q}^{(k)}\}$ defined in Section \ref{ftbp} are easy to implement in practice, as we show now. Since the convergences $\vc q^{(k)}\to \vc q$, $\tilde{\vc q}^{(k)}\to \tilde{\vc q}$, and $\bar{\vc q}^{(k)}\to {\vc q}$ (under the assumptions of Theorem \ref{general converge}) are pointwise, in order to evaluate the $i$th entry of the desired extinction probability vector, the chosen sequence of approximating vectors should be computed for $k\geq i$.

For $k\geq 1$, let $\vc s^{[k]}:=(s_1,\ldots,s_k)^\top\in [0,1]^k$ and $\vc G^{[k]}(\vc s):=(G_1(\vc s),\ldots,G_k(\vc s))^\top$, and let $\varepsilon$ be some predetermined tolerance error. For any $i\in\mathcal{S}$, the pseudo-code for the numerical computation of $q_i$ or $\tilde{q}_i$ depends on the function $\vc u(\vc s^{[k]})\in [0,1]^\infty$ as described in \eqref{u1}-\eqref{u3}, which determines which of the three sequences is used:

\medskip

\noindent {Set} $x^{(old)}_i:=2$, $k:=i$\\
{Compute} $\vc x^{(k)}$ as the minimal non-negative solution of $\vc s^{[k]}=\vc G^{[k]}( \vc s^{[k]},\vc u(\vc s^{[k]}))$\\
\textbf{While} $|x_i^{(k)}-x_i^{(old)}|>\varepsilon$ \textbf{do}\\
\phantom{While } $k:=k+1$\\
\phantom{While } Set $\vc x^{(old)}:=\vc x^{(k)}$\\
\phantom{While } Compute $\vc x^{(k)}$ as the minimal non-negative solution of $\vc s^{[k]}=\vc G^{[k]}(\vc s^{[k]},\vc u(\vc s^{[k]}))$\\
\textbf{endwhile}\\
Return $ x_i:=x_i^{(k)},$

\noindent where  
\begin{eqnarray}\label{u1}
\vc u(\vc s^{[k]})=\vc 0 & \rightarrow &\vc x^{(k)}=\vc q^{(k)} \;\textrm{and} \; x_i\approx q_i,\\
\vc u(\vc s^{[k]})=\vc1 & \rightarrow & \vc x^{(k)}=\tilde{\vc q}^{(k)}\;\textrm{and} \; x_i\approx\tilde{q}_i, \\\label{u3}
\vc u(\vc s^{[k]})=\sum_{j=1}^k \alpha_j^{(k)} s_j \vc 1& \rightarrow &\vc x^{(k)}=\bar{\vc q}^{(k)}\;\textrm{and} \;x_i\approx q_i.\end{eqnarray}
Note that the linear functional iteration algorithm or the quadratic Newton algorithm can be applied to compute the minimal non-negative solution of the finite system $\vc s^{[k]}=\vc G^{[k]}( \vc s^{[k]},\vc u(\vc s^{[k]}))$ for each value of $k$.}

\section{Proofs of the results related to Examples 2 and 3}
\begin{proof}[Proof of Lemma \ref{mean_prog_emb}]
We calculate $\bar{m}^{(k)}$ by taking the weighted sum of all first return paths to $k$ in the mean progeny representation graph,
$$
\bar{m}^{(k)}  =  \bar{M}^{(k)}_{k,k} + \bar{M}^{(k)}_{k,k-1}\bar{M}^{(k)}_{k-1,k} + \bar{M}^{(k)}_{k,k-1}\bar{M}_{k-1,k-2}^{(k)}\bar{M}^{(k)}_{k-2,k-1}\bar{M}^{(k)}_{k-1,k}+\ldots.$$
Observe that the number of these paths, $k \to (k-1)  \to \dots \to (k-1)\to k$, with length $2(l+1) \leq 2(k-1)$ is given by the Catalan number,
\begin{equation}\label{Catalan}
C_l=\frac{1}{l+1} { 2l \choose l },\quad l\geq 0,
\end{equation}
whereas, because no path can fall below type 1, the number of paths is less than $C_l$ when $2(l+1) > 2(k-1)$. In these expressions, $(l+1)$ can be interpreted as the total number of negative increments in the paths. Furthermore, the length of each first return path is even, the total number of positive and negative increments of each first return path is equal and each first return path alternates between odd and even states. Hence
,
\[
\bar{M}^{(k)}_{k,k}+\sum^{k-2}_{l=0}C_l \left(ac \right)^{l+1} \leq \bar{m}^{(k)} \leq \bar{M}^{(k)}_{k,k}+\sum^\infty_{l=0} C_l \left(ac \right)^{l+1}.
\]
The infinite series converges when $ac\leq 1/4$ and diverges when $ac>1/4$. In addition, when $ac \leq 1/4$,
\[
\sum^\infty_{l=0} \frac{1}{l+1} { 2l \choose l} \left(ac \right)^l = \frac{1}{2} \left(1 - \sqrt{1-4ac} \right), 
\]
which gives the result. \end{proof}

\begin{proof}[Proof of Proposition \ref{Tridiagonal}]
Following an approach analogous to the proof of {Proposition 5.1 in \cite{haut12},} we can show that the convergence norm of the mean progeny matrix $M$ is $\nu(M)=2 \sqrt{ac}$. By {Proposition 4.1 in \cite{haut12}, we see that} $\bm{\tilde{q}} = \bm{1}$ {if and only if} $ac \leq 1/4$. 

{We first turn our attention to cases {\it{(i)}} and {\it{(ii)}}. O}bserve that the number of first return paths to $k$ of any fixed length is monotone increasing with $k$. This means that the sequences $\{ \bar{m}^{(2k+1)} \}$ and $\{ \bar{m}^{(2k)} \}$ are monotonically increasing with respect to $k$. Due to the repetitive structure of the progeny distributions and the relative weakness of type 1 with respect to other odd types, we also have
\[
\bar{q}^{(2k+1)}_{2k+1} \geq \bar{q}^{(2k+3)}_{2k+3} \quad \mbox{and} \quad \bar{q}^{(2k)}_{2k} \geq \bar{q}^{(2k+2)}_{2k+2} ,
\]
for all $k \geq1$.
If $\lim_{k \to \infty} \bar{m}^{(2k+1)} >1$, then there exists $\varepsilon_1>0$ and an integer $k_1$ such that {for all $k\geq k_1$, }
\begin{equation}\label{EmbkBound1}
\bar{q}^{(2k+1)}_{2k+1}<1-\varepsilon_1,
\end{equation}
whereas, if $\lim_{k \to \infty} \bar{m}^{(2k+1)} \leq 1$ then, {for all $k \geq 1$,}
\begin{equation}\label{EmbkBound2}
\bar{q}^{(2k+1)}_{2k+1}=1.
\end{equation}
An equivalent result holds when we take the limit over the even values of $k$. 

Next, we have $\inf_{i} q_i  \geq \inf_i p_{i  }(\bm{0})>0$ 
which, by Lemma \ref{SeedDich}, implies that $| \bm{S}_k |$ satisfies the dichotomy property. Therefore, for any arbitrary integer $K\geq 1$,
\begin{equation*}
\limsup_{k \to \infty}\left( \bar{q}^{(k)}_i - q^{(k)}_i\right) ={c_i}\,\limsup_{k \to \infty}  \mbE_i \left( \left. \left( \bar{q}^{(k)}_k \right)^{| \bm{S}_k |} \right| | \bm{S}_k |> K\right)\,,
\end{equation*}
where $c_i=\tilde{q}_i-q_i=\lim_{k \to \infty} \mbP_i ( | \bm{S}_k | > K)$ by Lemmas \ref{absorb}, \ref{absorb2} and \ref{SeedDich}, with the same holding when $\limsup$ is replaced by $\liminf$. In combination with \eqref{EmbkBound1} and \eqref{EmbkBound2} we then obtain
\[
\lim_{k \to \infty} \left(\bar{q}^{(2k+1)}_i - q^{(2k+1)}_i \right)=
\begin{cases}
0, & \text{if  } \lim_{k \to \infty} \bar{m}^{(2k+1)} > 1, \\
 \tilde{q}_i-q_i, & \text{if  }  \lim_{k \to \infty} \bar{m}^{(2k+1)} \leq 1
\end{cases}
\]
and 
\[
\lim_{k \to \infty} \left(\bar{q}^{(2k)}_i - q^{(2k)}_i \right)=
\begin{cases}
0, & \text{if  } \lim_{k \to \infty} \bar{m}^{(2k)} > 1, \\
\tilde{q}_i-q_i, & \text{if  }  \lim_{k \to \infty} \bar{m}^{(2k)} \leq 1.
\end{cases}
\]
Use of the fact that $\lim_{k \to \infty}  \bm{q}^{(k)} \to \bm{q}$ and Lemma \ref{mean_prog_emb} then provides the result.

{Consider now case {\it{(iii)}}}.
We apply {Proposition 4.5 in \cite{haut12},}  which states that if there exists $\lambda \leq 1$ and a row vector $\bm{x} > \vc 0$ such that $\bm{x} \bm{1} < \infty$ and $\bm{x}M \leq \lambda \bm{x}$, then $\bm{q}=\bm{1}$. We let $\bm{x}=(x_i)_{i\geq 1}$ with $x_i =(\sqrt{d})^{(-1)^{i}}\,x^{1-i}$ for {some} $x>0$. In this case, $\bm{x}M \leq  \bm{x}$ is equivalent to $cx^2+a \leq x$, that is, $x$ {belongs to the interval }$[(1-\sqrt{1-4ac})/2c,(1+\sqrt{1-4ac})/2c]$. {Moreover, $x>1$} ensures $\bm{x} \bm{1} < \infty$. {It follows that whenever $1<(1+\sqrt{1-4ac})/2c$, there exists an $x$ satisfying both conditions, which implies that $\bm{\tilde{q}}=\bm{q}=\bm{1}$. }
\end{proof}

\begin{figure}
\centering
\begin{tikzpicture}

\tikzset{vertex/.style = {shape=circle,draw,minimum size=1.3em}}
\tikzset{edge/.style = {->,> = latex'}}

\node at (1.75,1.5) {(a)};
\node at (7.5,1.5) {(b)};

\node[vertex] (2) at  (4.75,0) {\small 2};
\node[vertex] (4) at  (6.75,0) {\small4};
\node[vertex] (8) at  (10.25,0) {\small8};

\node[vertex] (3) at  (5.75,-1) {3};

\node[vertex] (5) at  (7.26,-1.24) {\small5};
\node[vertex] (6) at  (8.65,-1.75) {\small6};
\node[vertex] (7) at  (9.73,-1.24) {\small7};

\node[vertex] (4a) at  (0,0) {\small4};
\node[vertex] (8a) at  (3.5,0) {\small8};

\node[vertex] (5a) at  (0.51,-1.24) {\small5};
\node[vertex] (6a) at  (1.75,-1.75) {\small6};
\node[vertex] (7a) at  (2.98,-1.24) {\small7};


\draw[edge,above] (2) to node {\footnotesize $3p$ } (4);
\draw[edge,above] (4) to node {\footnotesize $3p(1-3p\varepsilon^2$) } (8);

%
%
%
\draw[edge,below] (4) to[bend left] node[pos=.3,below] {\footnotesize $\varepsilon$ }(3);
\draw[edge,below] (3) to[bend left] node[pos=.7,below] {\footnotesize $\varepsilon$ }(2);

\draw[edge,below] (8) to[bend left=20] node[pos=.4,left] {\footnotesize $\varepsilon$ }(7);
\draw[edge,pos=.4,below] (7) to[bend left=20] node {\footnotesize $\varepsilon$ }(6);
\draw[edge,pos=.6,below] (6) to[bend left=20] node {\footnotesize $\varepsilon$ }(5);
\draw[edge, pos=.6 ,right] (5) to[bend left=20] node {\footnotesize $\varepsilon$ }(4);

\draw[line width=1.0pt, edge,below] (8) to[bend right=20]  (7);
\draw[line width=1.0pt,edge,below] (8) to[bend right=10]  (6);
\draw[line width=1.0pt,edge,below] (8) to[bend right=0]  (5);
\draw[line width=1.0pt,edge,below] (8) to[bend left=10]  (4);
\draw[line width=1.0pt,edge,below] (8) to[out=-80, in=-60, looseness=1.5]  (3);
\draw[line width=1.0pt,edge,below] (8) to[out=-230, in=50, looseness=.5]  (2);

\draw[line width=1.0pt,edge,left] (8) to [out=20,in=-20,looseness=6] node {} (8);

\draw[edge,below] (8a) to[bend left=20] node[pos=.4,left] {\footnotesize $\varepsilon$ }(7a);
\draw[edge,pos=.4,below] (7a) to[bend left=20] node {\footnotesize $\varepsilon$ }(6a);
\draw[edge,pos=.6,below] (6a) to[bend left=20] node {\footnotesize $\varepsilon$ }(5a);
\draw[edge, pos=.6 ,right] (5a) to[bend left=20] node {\footnotesize $\varepsilon$ }(4a);

\draw[edge,above] (4a) to node {\footnotesize $3p(1-3p\varepsilon^2$) } (8a);

\draw[edge,above] (4a) to [out=70,in=110,looseness=6] node {$\tilde{m}^{(4)}$} (4a);

%
%
\end{tikzpicture}
\caption{\label{Example3a}A visual representation of the mean progeny matrix in Example 3. Bold edges in (b) have weight $\frac{3p(1-3p \varepsilon^4)}{7}$.}
\end{figure}
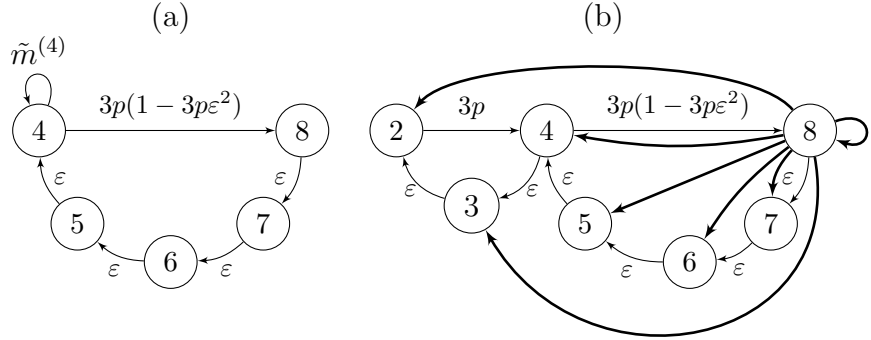

\begin{proof}[Proof of Proposition \ref{TheoremU}]
To prove \emph{(i)}, we consider the type-$2^{k}$ process embedded with respect to $\{ \bm{\tilde{Z}}^{(2^{k})}_n : \varphi_0= 2^{k}\}$, and calculate its mean number of offspring, denoted by $\tilde{m}^{(2^k)}$, for $k\geq 1$. We tackle this by computing the weighted sum of all first return paths to node $2^k$ in the mean progeny representation graph illustrated in Figure \ref{Example3}, which we alter by removing all nodes greater than $2^k$ to account for the corresponding types being sterile in $\{ \bm{\tilde{Z}}^{(2^{k})}_n \}$. Observe that when $k=2$, $(2^{k}=4)$ there is a single first return path $4 \to 3 \to 2 \to 4$, thus, $\tilde{m}^{(4)}=3p \varepsilon^2$. When $k>2$, we calculate $\tilde{m}^{(2^k)}$ recursively as follows. First observe that each first return path to $2^k$ begins with the sequence of edges $2^k \to (2^k-1) \to \dots \to (2^{k-1}+1) \to 2^{k-1}$, and ends with the edge $2^{k-1} \to 2^k$. Additionally, the remainder of each path (or the midsection) can be partitioned into the first return paths that were summed to obtain $\tilde{m}^{(2^{k-1})}$. That is, for the purpose of calculating $\tilde{m}^{(2^k)}$, in the mean progeny matrix representation graph, nodes of type $<2^{k-1}$ can then be replaced by a loop to type $2^{k-1}$ with weight $\tilde{m}^{(2^{k-1})}$. See Figure \ref{Example3a}(a) for an illustration when $k=3$. It can then be shown that,
\begin{align*}
\tilde{m}^{(2^{k})} &= \varepsilon^{2^{k-1}} \left[ 1 + \tilde{m}^{(2^{k-1})} + (\tilde{m}^{(2^{k-1})} )^2 + \dots \right] 3p(1-3p \varepsilon^{2^{k-2}}).
\end{align*}
We can then prove by induction that $\tilde{m}^{(2^k)}=\varepsilon^{2^{k-1}}3p$, which leads to $\tilde{m}^{(2^k)}<1$ for all $k\geq 1$ since, by assumption,  $3p \varepsilon^{(2^{k-1})}\leq 3p\varepsilon^2<1$. Combining this with the fact that for all $k \geq 2$, $\{ \bm{\tilde{Z}}^{(2^k)}_n \}$ is irreducible, we obtain $\bm{\tilde{q}}^{(2^k)}=\bm{1}$ for all {$k \geq 1$. Since $\{ 2^k\}_{k \geq 1}$} is an infinite subsequence of $\mbN$, the result then follows from the fact that $\lim_{k \to \infty} \bm{\tilde{q}}^{(k)} = \bm{\tilde{q}}$.

We now prove \emph{(ii)}. 
If $p>1/3$, then there exists $\gamma>0$ such that $p=1/3+\gamma$, and there exist an integer $N$ and a constant $0<C<3\gamma$ such that $3p(1- 3p\varepsilon^{i/2})=(1+3\gamma)(1- 3p\varepsilon^{i/2})>1+C$ for all $i \geq 2^N$ (since $3p\varepsilon^{i/2}$ becomes arbitrarily close to 0 as $i$ increases).
 By disregarding all types $j$ such that $j \leq 2^N$ or $j \notin \{2^k\}_{k \geq 2}$ it can be shown that $\{ |\bm{Z}_n| : \varphi_0=2^N \}$ is stochastically greater than the Galton-Watson process with progeny generating function $G(s)=(1/3+C/3)s^3+(2/3-C/3)$. Since $G'(1)>1$, we have $q_{2^N}<1$. The result follows from irreducibility.   
 
 To prove \emph{(iii)} we consider the type-$2^k$ process embedded in $\{ \bm{\bar{Z}}^{(2^k)}_n : \varphi_0=2^k\}$ and calculate its mean number  of offspring {$\bar{m}^{(2^k)}$}.  To account for the instantaneous replacement of all individuals of type $>2^k$ with a type {uniformly distributed on $\{2,\dots,2^k \}$}, the graph illustrated in Figure \ref{Example3} is altered by removing all nodes greater than $2^k$ and adding an edge of weight $3p(1-3p\varepsilon^{2^{k-1}})/(2^k-1)$ from node $2^k$ to all the remaining nodes. See Figure \ref{Example3a}(b) for an illustration when $k=3$. We then {calculate} the weighted sum of all first return paths to $2^k$. Note that these paths include those involved in the computation of $\tilde{m}^{(2^k)}$. More specifically, if we let $f_{i,k}$ be the weighted sum of all first passage paths from $2^i$ to $2^k$, then
\[
\bar{m}^{(2^k)}= \tilde{m}^{(2^k)}+ \frac{3p(1-3p\varepsilon^{2^{k-1}})}{2^k-1} \left( 1+ \sum^{k-1}_{j=1} f_{k-j,k} \left( \sum^{2^{k-j}-1}_{i=0} \varepsilon^i \right)\right).
\]
By applying a recursive argument analogous to the proof of \emph{(i)}, it can then be shown that $f_{i,k}=(3p)^{k-i}$. Assuming $p \neq1/3$ we have,
\begin{align*}
\bar{m}^{(2^k)} &= 3p \varepsilon^{2^{k-1}} + \frac{3p(1-3p\varepsilon^{2^{k-1}})}{2^k-1} \left(  1+ \frac{1}{1-\epsilon} \sum^{k-1}_{j=1} (3p)^j (1- \varepsilon^{2^{k-j}}) \right) \\
&\leq 3p \varepsilon^{2^{k-1}} + \frac{3p}{2^k-1} \left( \frac{1-(3p)^k}{(1-3p)(1-\varepsilon)} + \epsilon \right)\to 0\quad \mbox{as $k\to\infty$} 
\end{align*} 
when $p<2/3$. When $p=1/3$ it can be shown that an equivalent result holds. This demonstrates that $\lim_{k \to \infty} \bar{m}^{(2^k)}=0$ when $p<2/3$. The proof that $\bar{m}^{(k)} \to 0$ is an extension of the same method and is omitted. Therefore, there exists an integer $N$ such that for all $k>N$, $\bar{m}^{(k)}<1$ hence $\bar{\vc q}^{(k)}=\vc 1$, which shows that $\bar{\vc q}^{(k)}\to \vc 1$ as $k\to\infty$. 
\end{proof}

\section*{Acknowledgements}{The authors are grateful to the anonymous reviewers for their constructive comments, which helped us to improve the manuscript.}
The authors would like to acknowledge the support of the Australian Research Council (ARC) through the Centre of Excellence for the Mathematical and Statistical Frontiers (ACEMS). Sophie Hautphenne would further like to thank the ARC for support through Discovery Early Career Researcher Award DE150101044.
\section*{References}

\end{document}